\newenvironment{proof}[1][Proof.]{ \begin{trivlist}
\item[\hskip \labelsep {\bfseries #1}]}{\end{trivlist}}
\journal{Journal of Computational and Applied Mathematics}
\newtheorem{theorem}{Theorem }
\newtheorem{remark}{Remark  }
\newtheorem{example}{Example  }
\newtheorem{corollary}{Corollary  }
\newtheorem{definition}{Definition  }
\newdimen\bibindent
\newdimen\betweenumberspace    
\newdimen\headlineindent            
\begin{document}

\begin{frontmatter}

\title{ Hybrid Radial Kernels for Solving Weakly Singular Fredholm Integral Equations: Balancing Accuracy and Stability in Meshless Methods
}
 \author[a] {Davoud Moazami}
\ead{davoud.moazami@stu.malayeru.ac.ir}

 \author[a] {Mohsen Esmaeilbeigi \corref{cor1}}
\ead{m.esmaeilbeigi@malayeru.ac.ir}

\author[a] {Tahereh Akbari}
\ead{tahereh.akbari@stu.malayeru.ac.ir}

 \cortext[cor1]{Corresponding author}
\address[a]{Faculty of Mathematical and Statistical Sciences, Malayer University, Malayer, Iran}


\begin{abstract}
Over the past few decades, kernel-based approximation methods had achieved astonishing success in solving different problems in the field of science and engineering. However, when employing the direct or standard method of performing computations using infinitely smooth kernels, a conflict arises between the accuracy that can be theoretically attained and the numerical stability. In other words, when the shape parameter tends to zero, the operational matrix for the standard bases with infinitely smooth kernels become severely ill-conditioned. This conflict can be managed applying hybrid kernels. The hybrid kernels extend the approximation space and provide high flexibility to strike the best possible balance between accuracy and stability. In the current study, an innovative approach using hybrid radial kernels (HRKs) is provided to solve   weakly singular Fredholm integral equations  (WSFIEs) of the second kind in a meshless scheme.  The approach employs hybrid kernels built on dispersed nodes as a basis within the discrete collocation technique.  This method transforms the problem being studied  into a linear system of algebraic equations.  Also, the particle swarm optimization (PSO) algorithm is utilized  to calculate the optimal  parameters for the hybrid kernels, which is based on minimizing the maximum absolute error (MAE).  We also study the error estimate of the suggested scheme. Lastly,  we assess the accuracy and validity of the hybrid technique by carrying out various numerical experiments. The numerical findings show that the estimates obtained from hybrid kernels are significantly more accurate in solving WSFIEs compared to pure kernels. Additionally, it was revealed that the hybrid bases remain stable across various values of the shape parameters.
\end{abstract}

\begin{keyword}
   Fredholm integral equations; Weakly singular kernel; PSO algorithm; Hybrid kernels;  Stability;  Accuracy.
 
\end{keyword}

\end{frontmatter}

\section{Introduction}
  The primary aim  of this research is  to introduce a robust and stable computational technique  for approximating the second kind of  WSFIEs
\begin{equation}\label{eq1}
u(\mathbfit{x})-\lambda \int_{\Omega} K(\mathbfit{x}, \mathbfit{t}) u(\mathbfit{t}) d\mathbfit{t}=f(\mathbfit{x}),  \quad \mathbfit{x}=(x_1,x_2, \ldots, x_d), \mathbfit{t}=(t_1, t_2, \ldots, t_d) \in \Omega \subset \mathbb{R}^d, \quad  d\mathbfit{t}=dt_1 \ldots dt_d,
\end{equation}
in which the  {\color{red}right-hand side  (rhs) } function $f$ and the kernel function
\begin{align*}
K(\mathbfit{x}, \mathbfit{t})=\mathcal{R}(\mathbfit{x}, \mathbfit{t}) \mathcal{S}(\mathbfit{x}, \mathbfit{t})
\end{align*}
are given, $u$  represents the unknown function,  {\color{red} $ \lambda \in \mathbb{R} \setminus \left\{ 0 \right\} $   } and $\Omega$ is a closed bounded domain in $\mathbb{R}^d$. 
We also assume that $\mathcal{S}$ is several times continuously differentiable on $\Omega \times \Omega$ and 
the known function $\mathcal{R}$ has an infinite singularity in  {\color{red}
$\left\{ \left( \mathbfit{x}, \mathbfit{t} \right) \in \Omega \times \Omega: \mathbfit{x}= \mathbfit{t} \right\},$ 
the most significant  instances being}
\[ \mathcal{R}(\mathbfit{x}, \mathbfit{t})= \ln \Bigg ( \sqrt{ \sum_{i=1}^{d}   (x_i-t_i)^2  }  \Bigg )  \]
and
\[  \mathcal{R}(\mathbfit{x}, \mathbfit{t})= \Big \{  \sum_{i=1}^{d} (x_i-t_i) ^2  \Big \}^{\frac{\varsigma}{2}},  \]
for some $-1 < \varsigma <0,$ and variants of them \cite{A1}. 

WSFIEs play a crucial role in a wide range of engineering disciplines, such as, the analysis of electrostatic and low-frequency electromagnetic challenges \cite{1}, techniques for calculating the conformal mapping of specific regions \cite{2}, analyzing the propagation of acoustic and elastic waves \cite{3,4}, formulating problems related to radiative equilibrium and heat transfer \cite{5}, describing hydrodynamic interactions among elements in a polymer chain \cite{6}, addressing the scattering of surface water waves by a vertical barrier featuring a gap \cite{7}, and studying Dirichlet's problem associated with logarithmic potentials \cite{80},  and so on. For instance, the problem of ascertaining the cross-sectional distribution of current $u(x,t)$ in an infinitely long, slender conducting bar that carries an alternating current  \cite{7x} can be  defined by the following two dimensional WSFIEs:
\[  u(x,t)- \frac{\tau h i \nu}{2 \pi} \int_{\Omega} \Bigl[  \ln \sqrt{(x-y)^2+(t-s)^2} -\ln \xi_0  \Bigr]   u(y,s) dy ds=\mu_0 h, \quad (x,t) \in \Omega,
\]
in which  $\Omega \subset \mathbb{R}^2$ is the cross-sectional domain, $h$ denotes  the conductivity of the material of the bar, $\nu$ signifies  the angular frequency of the alternating current, the constant $\xi_0$ indicates  the origin potential, $\tau$  represents  the permeability of empty space, and the constant $\mu_0$
is contingent upon the choice of  $\xi_0$ and has no physical importance.

 Analytically solving WSFIEs, especially in high dimensions, is typically challenging.   As a result, there has been significant attention given to the development of simple and effective numerical approaches for approximating these equations in the past few years.
   Recently, many authors have  paid much attention to study the numerical approaches in finding the solution of WSFIEs.
   The Galerkin and collocation approaches \cite{8,A1} are effective techniques for finding the solutions of integral equations (IEs) by utilizing a set of basis functions. The piecewise polynomial collocation and Galerkin approaches 
   \cite{A2,A3}, high-order collocation approaches  \cite{A5},  Bubnov-Galerkin schemes \cite{A6}, iterated fast multiscale Galerkin methods \cite{A7}, hybrid collocation methods \cite{A8}, sinc-collocation methods \cite{A9} and   discrete Petrov-Galerkin methods \cite{A10} have been employed for solving WSFIEs. Authors of \cite{A11,A12} have investigated Adomian decomposition methods for solving WSFIEs. 
   Also, Daubechies interval wavelets \cite{A13}, Legendre wavelets \cite{A14} and  Haar wavelets \cite{A15}  have been employed to solve WSFIEs.

      Kernel-based approximation techniques have become effective and valuable computational tools in a wide range of applications, including interpolation,  data modeling, multivariate integration,  multivariate optimization, approximation theory, statistics, and numerical solution of PDEs in multivariate spaces. One highly effective approach in literature to address such problems involves utilizing radial kernels. Over the past few years, meshless approximations applying radial basis functions (RBFs) in the standard  format have been widely adopted  to approximate a wide range of IEs.  For example,  radial kernels in conventional framework  \cite{A16} have been put forward to solve system of IEs of the second kind. The  {\color{red} MPI (meshless product integration) }  approach \cite{A17} has been applied  to approximate WSFIEs. 
      Radial kernels have been used in meshless discrete collocation methods to  approximate  both linear and nonlinear 2D-FIEs on irregular regions with weakly singular kernels \cite{10} as well as sufficiently smooth kernels \cite{11, 12}.
   In \cite{13}, a  meshless collocation technique based on RBFs has been proposed for singular-logarithmic boundary IEs. The RBF approach \cite{14, 15, 16, 17} has been employed for solving multivariate linear FIEs on the regular and irregular regions. A RBF-based meshless procedure \cite{18} has been applied to solve oscillatory FIEs. The papers \cite{19, 200} have outlined a stable meshless approach for solving  FIEs with sufficiently smooth kernels.
      
         These methods offer significant benefits in terms of their simplicity in implementation, adaptability to various geometries, and the ability to achieve spectral (or exponential) convergence rates by utilizing infinitely smooth radial kernels. The lowest possible error is typically attained when the shape parameter of the radial kernel is small. In other words, when the radial kernel is relatively flat. Nevertheless, when employing the conventional or  standard method of computing using infinitely smooth kernels, a contradiction arises between  stability and accuracy. In other words, as the shape parameter approaches zero, the standard operational matrix becomes considerably more ill-conditioned. The relationship between the  ill-conditioning of computational systems and the level of smoothness in the kernel has been extensively investigated in the literature over an extended period of time. There are several approaches available to address the issue of ill-conditioning. Up until now, a range of numerical algorithms have been presented to address the challenge of ill-conditioning when utilizing radial kernels.   A first stabilization method to work with multiquadric kernels in the flat limit was introduced by removing  the constraint that the shape parameter be real. \cite{220}. The technique utilized was referred to as the RBF-CP methodology and was  exclusively applicable to a limited quantity of nodes. 
         The authors of \cite{20, 21} presented a novel approach called RBF-QR algorithm for interpolation, which combines a QR decomposition of the interpolation matrix with series expansions of the kernel. This technique is specifically designed for interpolation using   Gaussian and zonal kernels.
         A stable method was introduced in \cite{22} by employing Mercer's theorem to acquire a new basis for the Gaussian interpolant. This interpolant is obtained from the eigenfunction expansion of the Gaussian kernel in multi-dimensional space. 

  Another effective technique to address ill-conditioning is by utilizing hybrid bases. Mishra and colleagues \cite{4300} initially presented the hybrid Gaussian-cubic radial kernel as a stable approach for solving interpolation problems associated with dispersed nodes.
  They demonstrated that incorporating a tiny portion of the cubic kernel into the Gaussian kernel effectively diminishes the condition number, thereby rendering the associated algorithm well-posed.
  Subsequent research \cite{43} has demonstrated that such a hybrid kernel  offers a sensible alternative for effectively solving partial differential equations using the RBF-PS method. 
In \cite{Yang1}, this kernel was utilized to reconstruct the temperature distribution within the measurement  field. Also, a novel approach was presented in \cite{omer}, which proposes a local hybrid Gaussian-cubic kernel approach to  approximate the 2-D fractional cable equation.
In  \cite{Manzoor}, the hybrid radial kernels  method was employed to solve the Burgers' equation, which featured varying Reynolds numbers.
Aside from the previously mentioned applications,  the hybrid kernels have also been used for detecting blockages within a fluid channel \cite{Cristhian}, the fractional Rayleigh-Stokes equations \cite{Nikan},   the gravity inversion approach \cite{Shuang}, the convection–diffusion equation \cite{Nissaya} and so on.

The primary concept behind this hybridization technique is to construct a kernel that combines the strengths of two distinct kernels, while also overcoming the limitations of each and preserving the formulation as a standard radial kernel methodology.
Other benefits of the suggested approach encompass the following:
\begin{itemize}
\item[$\bullet$] This technique is highly efficient, devoid of a mesh structure, and can easily adjust to irregular geometrical domains. Moreover, this approach is   unaffected by the dimensions of the problem.
\item[$\bullet$] The process of combining a kernel with infinite smoothness to a kernel with piecewise smoothness substantially enhances accuracy and diminishes the condition number.
\item[$\bullet$] The scope of this approach can be expanded to include other categories of IEs.
\item[$\bullet$] The application of this technique on computer systems is straightforward and highly appealing from a computational standpoint.
\end{itemize}

This article proposes  a numerical approach based on the hybrid bases for the solution of linear WSFIEs. The approach employs hybrid kernels built on dispersed nodes as a basis within the discrete collocation technique. This approach transforms  solving the problem under consideration into solving a  system of linear equations. Furthermore, based on the MAE, the optimal parameters in the hybrid kernels can be computed utilizing an enhanced PSO algorithm. We also study convergence of the proposed hybrid scheme.  Finally, we assess the accuracy and validity of the hybrid scheme by carrying out various numerical experiments. Also, the presented hybrid algorithm do not require a structured mesh, and therefore be applied to approximate complex geometry problems based on a set of dispersed nodes.
   
         The rest of the article unfolds as follows:   Section \ref{sec3}, delves into the fundamental formulations and characteristics of the HRKs technique. In Section \ref{sec4}, a meshless scheme utilizing hybrid kernels to solve the linear WSFIEs is introduced.
         {\color{red} The computational complexity of the scheme is investigated in Section \ref{sec63}.}
         In Section \ref{sec6}, the convergence of the proposed approach is discussed.
           In Section \ref{sec8}, we introduce  an enhanced PSO algorithm for determining the optimal  parameters in hybrid kernels when approximating the numerical solution of the WSFIEs.   Numerical examples are proposed  in Section \ref{sec7}. 
The article ends with concluding in Section \ref{sec9}.
\section{Fundamentals of Hybrid Radial Kernels: formulations and properties }\label{sec3}
In this part, we will examine key definitions and fundamental mathematical concepts regarding hybrid  kernels that are utilized in the present study.
\subsection{Approximation by HRKs}
Assume the set $\mathit{X}:=\{ \mathbfit{x}_i\}_{i=1}^{n}$ consists of a sequence of distinct  {\color{red} scattered} nodes over the region $\Omega\subset\mathbb{R}^d$.
A linear combination using the radial kernel $\Phi(\mathbfit{x}):=\phi(\Vert \mathbfit{x} \Vert)$, $\mathbfit{x} \in \mathbb{R}^d,$ is employed to estimate a function $u$ as belows \cite{44}:
\begin{align*}
u(\mathbfit{x})\approx\mathcal{P}_n u(\mathbfit{x}):=\sum_{j=1}^n\alpha_j \Phi_j(\mathbfit{x}),\quad x\in \Omega,
\end{align*}
in which
\begin{align*}
\Phi_j(\mathbfit{x}):=\phi(\Vert \mathbfit{x}-\mathbfit{x}_j\Vert)=\Phi \left( \mathbfit{x}-\mathbfit{x}_j \right).
\end{align*}
The coefficients $\mathbfit{\alpha}:=[\alpha_1,\alpha_2,...,\alpha_n]^T$ are calculated  by satisfying the interpolation conditions 
\begin{align*} 
\mathcal{P}_n u(\mathbfit{x}_i)=u(\mathbfit{x}_i) \quad i=1,2, \ldots, n,
\end{align*}
i.e. they can be acquired by solving a linear system 
\begin{equation}\label{eq11}
\mathbf{A}\mathbfit{ \alpha=}\mathbf{u},
\end{equation}
in which  $\mathbf{A}_{jk}:=[\phi(\Vert \mathbfit{x}_i-\mathbfit{x}_j\Vert)]_{i,j=1}^n$ and $\mathbf{u}:=[u(\mathbfit{x}_1),u(\mathbfit{x}_2),...,u(\mathbfit{x}_n)]^T$. There are two primary categories of radial kernels: piecewise smooth and  infinitely smooth.  Table \ref{tab1} lists a variety of well-known radial kernels.  All the infinitely smooth radial kernels presented in Table 1 will give coefficient matrices $\mathbf{A}$ in (\ref{eq11}) which are nonsingular. For  inverse multiquadrics (IMQ) and Gaussian (GA), the matrix is positive definite. For multiquadrics (MQ), there is one positive eigenvalue, while all other eigenvalues are negative, which makes the matrix invertible \cite{22, 31, 37, 39, 40, 42}. However, the piecewise smooth radial kernels mentioned in Table \ref{tab1} are symmetric and conditionally strictly  positive definite kernels. Thus, it is often necessary to use lower degree polynomials 
\[ \Pi(\mathbfit{x})=\sum_{k=1}^{m} \zeta_k p_k (\mathbfit{x}), \]
in order to guarantee invertibility.  Nonetheless,  in the majority of instances, radial kernels are utilized without the inclusion of augmented polynomials, leading to successful outcomes without facing singular matrices. Hence, the augmented polynomial terms are frequently eliminated.

Based on the aforementioned categorization, we can construct a hybrid radial kernel (HRK) family as belows:
\begin{align*}
\psi_j(\mathbfit{x}):=\alpha\Phi_j^\varepsilon(\mathbfit{x})+\beta\varphi_j(\mathbfit{x}), \quad j=1,2,...,n,
\end{align*}
in which $\Phi^\varepsilon(\mathbfit{x}):=\phi(\varepsilon\Vert \mathbfit{x}\Vert)$ is an infinitely smooth  radial  kernel with shape parameter $\varepsilon$ and $\varphi(\mathbfit{x}):=\varpi(\Vert \mathbfit{x} \Vert)$ is a piecewise smooth  radial  kernel free from shape parameter. 
The values  $\alpha$ and $\beta$ are positive real numbers that determine the influence of every kernel.
Moreover, it is important to note that scaling a kernel by a constant does not influence the outcome of the algorithm. \citep{43}. Hence, the just defined HRK can be normalized by utilizing the constant $\rho=\dfrac{\beta}{\alpha}$. This provides
\begin{equation}\label{eq13}
\psi_j^{\varepsilon,\rho}(\mathbfit{x})=\Phi_j^\varepsilon(\mathbfit{x})+\rho\varphi_j(\mathbfit{x}), \quad j=1,2,...,n.
\end{equation}
Now, the HRKs family (\ref{eq13})  includes two parameters, the shape parameter $\varepsilon$ and weight parameter  $\rho$, that control the accuracy and stability of the hybrid kernel approach.
We will now estimate the unknown function $u$  using the HRKs as belows:
\begin{align*}
u(\mathbfit{x})\approx\mathcal{Q}_nu(\mathbfit{x})=\sum_{j=1}^n c_j\psi_j^{\varepsilon, \rho}(\mathbfit{x}), \quad \mathbfit{x} \in \Omega\subset\mathbb{R}^d,
\end{align*}
where
\begin{align} \label{H11}
\psi_j^{\varepsilon, \rho}(\mathbfit{x}):=\Phi_j^\varepsilon(\mathbfit{x})+\rho\varphi_j(\mathbfit{x})=\phi_j(\varepsilon\Vert \mathbfit{x}-\mathbfit{x}_j\Vert)+\rho \varpi_j(\Vert \mathbfit{x}-\mathbfit{x}_j\Vert), \quad j=1,2,...,n.
\end{align}
The coefficients  $\mathbf{c}:=[c_1,c_2,...,c_n]^T$ are calculated via solving  $\mathbf{\Gamma c=u}$, where the hybrid  kernel matrix $\mathbf{\Gamma} \in \mathbb{R}^{n \times n}$ is 
\begin{align*}
\mathbf{\Gamma}_{i,j}=\psi_j^{\varepsilon,\rho}(\mathbfit{x}_i), \quad i,j=1, \ldots, n.
\end{align*}
\begin{table}
\centering
\caption{Some popular  radial kernels}\label{tab1}
\vspace*{0cm}
\begin{tabular}{lllllll}
\hline
\hline
\textbf{Name of radial kernel} &&  && \textbf{Definition }\\
\hline
{\textbf{Infinitely smooth:}} && && &&\\
\\
{Multiquadrics (MQ)} &&  && {$\sqrt{\varepsilon^2 r^2+1}$}\\
\\
{Inverse multiquadrics (IMQ)} &&  && {$\dfrac{1}{\sqrt{\varepsilon^2 r^2+1}}$}\\
\\
{Gaussian (GA)} &&  && {$e^{-(\varepsilon r)^2}$}\\
\\
\hline
{\textbf{Piecewise smooth:}} && && && \\
\\
{Thin plate spline (TPS)} &&  && {$ r^{2} \log(r)$}\\
\\
{Cubic (CU)}  &&  && {$r^{3}$}\\
\hline
\hline
\end{tabular}
\end{table}
\subsection{Error analysis for HRKs}
Here, we will delve into the error bound  of HRKs interpolation. In order to do so, it is necessary to introduce specific Hilbert spaces that are related to the radial kernels known as native Hilbert spaces.
\begin{theorem}\cite{31, 44}\label{theo1}
Let $\Phi\in L_1(\mathbb{R}^d) \cap C(\mathbb{R}^d) $ be a  strictly positive definite kernel. Then the native Hilbert {\color{red} space
with respect } to $\Phi$ is 
\begin{align*}
\mathcal{N}_\Phi(\mathbb{R}^d):=\bigg\lbrace h \in    L_2(\mathbb{R}^d) \cap C(\mathbb{R}^d):\dfrac{\hat{h}}{\sqrt{\hat{\Phi}}}\in L_2(\mathbb{R}^d)\bigg\rbrace,
\end{align*}
with inner product
\begin{align*}
\langle h,g \rangle_{\mathcal{N}_{\Phi(\mathbb{R}^d)}}:=(2\pi)^{-\frac{d}{2}}\langle\frac{\hat{h}}{\sqrt{\hat{\Phi}}},\frac{\hat{g}}{\sqrt{\hat{\Phi}}}\rangle_{L_{2(\mathbb{R}^d)}}=(2\pi)^{-\frac{d}{2}}\int_{\mathbb{R}^d}\frac{\hat{h}(w)\overline{\hat{g}(w)}}{\sqrt{\hat{\Phi}(w)}}dw,
\end{align*}
where $\hat{h}$ is the Fourier transform of $h$.
\end{theorem}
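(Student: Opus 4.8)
The plan is to verify the claimed Fourier-side formula first on the dense subspace of finite linear combinations of kernel translates, and then to extend it to the whole native space by completion, identifying the limit space with the concrete set $G := \{h \in L_2(\mathbb{R}^d)\cap C(\mathbb{R}^d) : \hat h/\sqrt{\hat\Phi}\in L_2(\mathbb{R}^d)\}$. First I would record the analytic consequences of the hypotheses. Since $\Phi\in L_1(\mathbb{R}^d)\cap C(\mathbb{R}^d)$ is strictly positive definite, Bochner's theorem gives that $\hat\Phi$ is continuous and nonnegative, and strict positive definiteness upgrades this to $\hat\Phi(w)>0$ for all $w$; moreover $\hat\Phi$ is bounded, being the transform of an $L_1$ function. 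Hence division by $\hat\Phi$ (equivalently by $\sqrt{\hat\Phi}$) is well defined and the candidate inner product is meaningful.

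Next, for a finite combination $f=\sum_j \alpha_j \Phi(\cdot-\mathbfit{x}_j)$, which spans the pre-Hilbert space underlying the abstract native space under the rule $\langle \Phi(\cdot-\mathbfit{x}),\Phi(\cdot-\mathbfit{y})\rangle = \Phi(\mathbfit{x}-\mathbfit{y})$, I would compute both sides. Using $\hat f(w)=\hat\Phi(w)\sum_j\alpha_j e^{-i\mathbfit{x}_j\cdot w}$ together with the inversion formula $\Phi(\mathbfit{x}_j-\mathbfit{x}_k)=(2\pi)^{-d/2}\int \hat\Phi(w)e^{i(\mathbfit{x}_j-\mathbfit{x}_k)\cdot w}\,dw$, a direct calculation yields
\begin{align*}
\langle f,f\rangle = \sum_{j,k}\alpha_j\overline{\alpha_k}\,\Phi(\mathbfit{x}_j-\mathbfit{x}_k) = (2\pi)^{-\frac{d}{2}}\int_{\mathbb{R}^d}\frac{|\hat f(w)|^2}{\hat\Phi(w)}\,dw,
\end{align*}
so the abstract norm coincides with the $G$-norm on this dense subspace. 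The same reasoning, via polarization, handles the full sesquilinear inner product, and evaluating $\langle f,\Phi(\cdot-\mathbfit{x})\rangle$ by Fourier inversion recovers $f(\mathbfit{x})$, exhibiting the reproducing property.

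Finally I would pass to the limit. Since $G$ is complete (a Cauchy sequence in $G$ transports, via $h\mapsto \hat h/\sqrt{\hat\Phi}$, to a Cauchy and hence convergent sequence in $L_2$, whose limit lifts back to an element of $G$), and since the isometry above extends continuously from the dense set of translates, the abstract completion is isometrically identified with $G$, with point evaluations remaining continuous in the limit. The main obstacle I anticipate is the density and extension step rather than the algebra: one must confirm that no element of $G$ lies outside the closure of the translates --- equivalently, that the Fourier map onto the relevant weighted $L_2$ space is surjective --- and one must justify the Fourier-inversion manipulations for a general $h\in G$, typically by first approximating $h$ by functions whose transforms are compactly supported or integrable and then passing to the limit. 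Both points rest on the strict positivity and boundedness of $\hat\Phi$ secured at the outset.
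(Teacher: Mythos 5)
The paper does not actually prove this statement: it is quoted, with citations, from the literature (it is Theorem~10.12 of Wendland's book, as the paper itself points out in the remark immediately following), so there is no in-paper argument to compare yours against step by step. Your outline reproduces the standard textbook proof of that result: establish the Fourier-side formula for the inner product on the span of kernel translates via Bochner's theorem and Fourier inversion, check the reproducing property there, then identify the abstract completion with the concrete space $G$ by proving $G$ complete, point evaluations continuous, and the translates dense in $G$. That is the right architecture, and you correctly identify the density/surjectivity step (no element of $G$ lies outside the closure of the translates, plus justifying Fourier inversion for general $h\in G$ by approximation) as where the real work sits; in Wendland's proof that step is carried out in detail and is not merely a formality.

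One assertion at the start of your argument is, however, not correct as stated: strict positive definiteness of an $L_1(\mathbb{R}^d)\cap C(\mathbb{R}^d)$ kernel does \emph{not} imply $\hat{\Phi}(w)>0$ for every $w$. Bochner's theorem gives $\hat{\Phi}\geqslant 0$, and the standard sufficient condition for strict positive definiteness (Wendland, Theorem~6.11) is only that $\hat{\Phi}$ be nonnegative and not identically zero; for instance, the inverse Fourier transform of a smooth, nonnegative, compactly supported bump is a strictly positive definite kernel in $L_1\cap C$ whose transform vanishes off a compact set. So the quotient $\hat{h}/\sqrt{\hat{\Phi}}$ is not automatically "well defined everywhere": on the zero set of $\hat{\Phi}$ one must either restrict to kernels with everywhere-positive transform or interpret membership in $G$ as requiring $\hat{h}$ to vanish a.e.\ where $\hat{\Phi}$ does, and this caveat propagates into your completeness and density arguments. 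For the kernels actually used in this paper (Gaussian and (inverse) multiquadrics) the Fourier transform is everywhere positive, so the gap is harmless in context, but it must be closed if the proof is to cover the full generality of the hypothesis as stated.
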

We can also extend the notion of native spaces for conditionally positive definite radial kernels, although the specific intricacies will not be discussed  in this study. {\color{red}For more information about the native Hilbert spaces,  refer to Theorem 10.12 on page 139 of reference \cite{ 44}.}

To obtain an error bound, we will now  present two common indicators of data regularity \cite{31, 44}:
\begin{definition}\label{def2}
 The   fill distance  of  $\mathit{X}=\{ \mathbfit{x}_i\}_{i=1}^{n} \subset \Omega$ is defined via
\begin{equation}\label{eq16}
h_{\mathit{X},\Omega}:=\sup_{\mathbf{\mathbfit{x}}\in \Omega}\min_{1\leq j \leq n}\Vert \mathbfit{x}-\mathbfit{x}_j\Vert_2.
\end{equation}
\end{definition}
\begin{definition}\label{def3}
The separation distance of $\mathit{X}=\{ \mathbfit{x}_i\}_{i=1}^{n}$ is defined via
\begin{equation}\label{eq17}
q_\mathit{X}:=\dfrac{1}{2}\min_{i\neq j}\Vert \mathbfit{x}_i-\mathbfit{x}_j \Vert.
\end{equation}
\end{definition}
\begin{remark}
The values  (\ref{eq16}) and (\ref{eq17}) describe an idea of the data distribution,  indicating the level of uniformity among nodes. In fact,  the set $\mathit{X}$ is called to be quasi-uniform if there is a constant $\vartheta >0$ such that
\[q_{\mathit{X}} \leq h_{\mathit{X},\Omega} \leq \vartheta q_{\mathit{X}}.\]
\end{remark}

From the definitions provided, we can now articulate the following theorem:
\begin{theorem}\label{theo2}\cite{44}
Let  $\Omega \subset \mathbb{R}^d $ be a bounded region that satisfies interior
cone condition. Let   $\Phi$ {\color{red} be a } conditionally positive deﬁnite kernel and $ \mathcal{P}_{n}u$  the radial kernel {\color{red} interpolating } function of  
$u \in\mathcal{N}_{\Phi}(\Omega)$ on the
set  $\mathit{X}=\{ \mathbfit{x}_i\}_{i=1}^{n}$.
Then, for some positive numbers  $h_0$ and $C$, we have 
\begin{align*}
\Vert u- \mathcal{P}_{n}u \Vert_{L^{\infty}(\Omega)} \leqslant C \sqrt{C_\Phi} h_{{}_{\mathit{X} , \Omega}}^{k}\Vert u \Vert _{\mathcal{N}_{\Phi}(\Omega)},
\end{align*}
provided  $h_{{}_{\mathit{X} , \Omega}} \leqslant h_0.$ The number  $C_\Phi$ is defined by
\[ C_\Phi= \max_{\vert \gamma \vert=2k} \max_{\varsigma , z \in \Omega \cap B(\mathbfit{x}, c_2h_{{}_{\mathit{X} , \Omega}})} \vert  \mathcal{D}_{2}^{\gamma}\Phi(\varsigma, z)  \vert, \]
\end{theorem}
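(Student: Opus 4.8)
The plan is to reduce the global $L^\infty$ bound to a pointwise estimate governed by the power function, and then to control that power function locally through polynomial reproduction. First I would recall the optimal-recovery characterisation of kernel interpolation: for every $\mathbfit{x}\in\Omega$ the interpolation error admits the representation
\begin{align*}
|u(\mathbfit{x})-\mathcal{P}_n u(\mathbfit{x})| \leq P_{\Phi,\mathit{X}}(\mathbfit{x})\,\|u\|_{\mathcal{N}_\Phi(\Omega)},
\end{align*}
where $P_{\Phi,\mathit{X}}$ is the power function associated with $\Phi$ and the node set $\mathit{X}$. This is the standard consequence of the reproducing-kernel structure of $\mathcal{N}_\Phi(\Omega)$ established in Theorem \ref{theo1}: the error functional $\delta_{\mathbfit{x}}-\sum_j u_j^*(\mathbfit{x})\,\delta_{\mathbfit{x}_j}$ has native-space norm equal to $P_{\Phi,\mathit{X}}(\mathbfit{x})$, and Cauchy--Schwarz yields the displayed inequality. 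Since the right-hand side factorises, it then suffices to prove $\sup_{\mathbfit{x}\in\Omega}P_{\Phi,\mathit{X}}(\mathbfit{x}) \leq C\sqrt{C_\Phi}\,h_{\mathit{X},\Omega}^{k}$.

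Next I would exploit the variational characterisation of the power function,
\begin{align*}
P_{\Phi,\mathit{X}}(\mathbfit{x})^2 = \min_{\mathbfit{a}}\Big(\Phi(\mathbfit{x},\mathbfit{x}) - 2\sum_{j}a_j\,\Phi(\mathbfit{x},\mathbfit{x}_j) + \sum_{j,\ell}a_j a_\ell\, \Phi(\mathbfit{x}_j,\mathbfit{x}_\ell)\Big),
\end{align*}
which means that any admissible choice of coefficients $\mathbfit{a}=(a_j)$ furnishes an upper bound for $P_{\Phi,\mathit{X}}(\mathbfit{x})^2$. The idea is to select the $a_j=a_j(\mathbfit{x})$ coming from a local polynomial reproduction. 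Because $\Omega$ satisfies the interior cone condition, for $h_{\mathit{X},\Omega}$ below a threshold $h_0$ there exist coefficients $a_j(\mathbfit{x})$, supported on the nodes lying in $\Omega\cap B(\mathbfit{x},c_2 h_{\mathit{X},\Omega})$, such that $\sum_j a_j(\mathbfit{x})\,p(\mathbfit{x}_j)=p(\mathbfit{x})$ for every polynomial $p$ of degree $<2k$, together with the stability bound $\sum_j|a_j(\mathbfit{x})|\leq C$. Establishing this reproduction with a uniformly bounded Lebesgue constant and a fixed locality radius $c_2$ is precisely where the cone condition enters, through a Markov/norming-set argument.

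With these coefficients fixed, I would insert them into the quadratic form and Taylor-expand $\Phi(\cdot,\cdot)$ about the diagonal in its second argument up to order $2k$. The crucial cancellation is that all Taylor terms of order below $2k$ are annihilated by the polynomial-reproduction property, so only the integral remainder survives; estimating that remainder introduces exactly the factor $\max_{|\gamma|=2k}|\mathcal{D}_2^\gamma\Phi|$ over the local ball, that is $C_\Phi$, while the locality of the support in $B(\mathbfit{x},c_2 h_{\mathit{X},\Omega})$ contributes the power $h_{\mathit{X},\Omega}^{2k}$ once $\sum_j|a_j|\leq C$ is used. Collecting constants gives $P_{\Phi,\mathit{X}}(\mathbfit{x})^2\leq C^2 C_\Phi\, h_{\mathit{X},\Omega}^{2k}$ uniformly in $\mathbfit{x}$, and taking square roots completes the estimate.

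The hard part will be the bookkeeping in the Taylor-remainder step combined with the conditionally positive definite case. For merely conditionally positive definite kernels the quadratic form must be restricted to coefficient vectors obeying the polynomial side conditions, so the local reproduction has to be compatible with that constraint; one must verify that the chosen $a_j(\mathbfit{x})$ indeed annihilate the relevant polynomials and that the power function remains well defined on the admissible subspace. Handling this, and checking that the constants $C$, $c_2$ and $h_0$ can be taken independent of $\mathbfit{x}$ and of $\mathit{X}$ — depending only on $\Omega$, $d$ and $k$ — is the technical heart of the argument; the polynomial-reproduction construction under the interior cone condition is the single ingredient that makes the whole chain of estimates possible.
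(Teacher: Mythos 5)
The paper gives no proof of this theorem at all: it is quoted verbatim from Wendland \cite{44}, so there is nothing internal to compare against. Your outline is correct and is essentially the standard argument from that source --- the power-function bound via the reproducing-kernel structure, its variational characterisation, local polynomial reproduction with bounded Lebesgue constant under the interior cone condition, and the Taylor-remainder estimate that produces $\sqrt{C_\Phi}\,h_{\mathit{X},\Omega}^{k}$ --- and you correctly single out the compatibility of the reproduction coefficients with the polynomial side conditions in the conditionally positive definite case as the delicate point.
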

{\color{red} where $\mathcal{D}_{2}^{\gamma}$   denotes the derivative of order $\gamma$ with respect to the second argument
and $B(\mathbfit{x}, c_2h_{{}_{\mathit{X} , \Omega}})$ is  the ball centered at $\mathbfit{x}$ and radius $c_2h_{{}_{\mathit{X} , \Omega}}$.} \\
The aforementioned theorem allows us to  describe an error bound for approximation achieved through hybrid kernels.
 {\color{red} \begin{remark}\label{cor101}
Note that   any  linear combination of conditionally positive definite kernels with nonnegative coefficients gives a conditionally positive definite \cite{31}.
\end{remark}
\begin{corollary}\label{cor1}
Let    $\Phi^{\varepsilon}( \mathbfit{x} )$  and $\varphi( \mathbfit{x})$  be conditionally  strictly positive definite kernels.  Then $\psi^{\varepsilon, \rho}(\mathbfit{x}):=\Phi^{\varepsilon}(\mathbfit{x})+\rho \varphi(\mathbfit{x})$ is conditionally positive deﬁnite for all $\rho>0.$
Also,  for the  hybrid kernel $\psi^{\varepsilon, \rho}(\mathbfit{x})  \in C^{2k}(\Omega \times \Omega) $, there exist numbers   $\hat{C}$ and $\tilde{h}_0$   such that
\begin{align*}
\Vert u- \mathcal{Q}_{n}u \Vert_{L^{\infty}(\Omega)} \leqslant \hat{C} \sqrt{\hat{C}_{\psi^{\varepsilon, \rho}}} h_{{}_{\mathit{X} , \Omega}}^{k}\Vert u \Vert _{\mathcal{N}_{{\psi^{\varepsilon, \rho}}}(\Omega)},
\end{align*}
where $\mathcal{N}_{{\psi^{\varepsilon, \rho}}}(\Omega)$ is the native space of kernel ${\psi^{\varepsilon, \rho}( \mathbfit{x} )},$ $\mathcal{Q}_{n}u$ is
the interpolating function of  $u \in\mathcal{N}_{{\psi^{\varepsilon, \rho}}}(\Omega)$, $h_{{}_{\mathit{X} , \Omega}} \leqslant \tilde{h}_0$ and
\[ \hat{C}_{\psi^{\varepsilon, \rho}}= \max_{\vert \gamma \vert=2k} \max_{\varsigma , z \in \Omega \cap B(\mathbfit{x}, c_2h_{{}_{\mathit{X} , \Omega}})} \vert  \mathcal{D}_{2}^{\gamma}{\psi^{\varepsilon, \rho}}(\varsigma, z)  \vert.  \]
\end{corollary}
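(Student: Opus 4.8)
The plan is to treat the statement as a direct specialization of Theorem~\ref{theo2} to the single kernel $\psi^{\varepsilon, \rho}$, once that kernel is shown to inherit the structural hypotheses required there. Accordingly, I would organize the argument into two stages: first verifying conditional positive definiteness of $\psi^{\varepsilon,\rho}$, and then invoking the interpolation error estimate.

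For the first stage, I would note that $\psi^{\varepsilon,\rho}=\Phi^{\varepsilon}+\rho\,\varphi$ is a linear combination, with the nonnegative weights $1$ and $\rho>0$, of the two conditionally strictly positive definite kernels $\Phi^{\varepsilon}$ and $\varphi$. By Remark~\ref{cor101}, any such nonnegative combination of conditionally positive definite kernels is again conditionally positive definite; hence $\psi^{\varepsilon,\rho}$ is conditionally positive definite for every $\rho>0$, which settles the first claim and guarantees that the interpolant $\mathcal{Q}_n u$ is well defined.

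For the second stage, I would check that $\psi^{\varepsilon,\rho}$ fulfils the remaining assumptions of Theorem~\ref{theo2}. The domain $\Omega$ is unchanged and still satisfies the interior cone condition, $u$ is assumed to belong to the native space $\mathcal{N}_{\psi^{\varepsilon,\rho}}(\Omega)$, and the smoothness requirement is supplied by the hypothesis $\psi^{\varepsilon,\rho}\in C^{2k}(\Omega\times\Omega)$. Here the admissible order $k$ is dictated by the piecewise smooth component $\varphi$, since $\Phi^{\varepsilon}$ is $C^{\infty}$ and therefore does not limit the regularity of the sum. With these hypotheses in place, I would apply Theorem~\ref{theo2} verbatim, with $\Phi$ replaced by $\psi^{\varepsilon,\rho}$ and $\mathcal{P}_n u$ replaced by $\mathcal{Q}_n u$. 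The generic constants $C$ and $h_0$ then become the kernel-dependent constants $\hat{C}$ and $\tilde{h}_0$, while $C_\Phi$ specializes exactly to the quantity $\hat{C}_{\psi^{\varepsilon,\rho}}$ defined in the statement, yielding the asserted bound.

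I expect the only genuinely delicate points to lie in the transfer of hypotheses to the sum rather than in the final estimate. In particular, conditional positive definiteness of a sum requires the two kernels to be compatible in their polynomial reproduction order $m$, so that the augmenting polynomial space in the definition is common to both (one works with the larger of the two orders); and the native space $\mathcal{N}_{\psi^{\varepsilon,\rho}}(\Omega)$ of the combined kernel is not simply the sum or the intersection of the two component native spaces, so one must take care that both the norm $\Vert u\Vert_{\mathcal{N}_{\psi^{\varepsilon,\rho}}(\Omega)}$ and the constant $\hat{C}_{\psi^{\varepsilon,\rho}}$ are meaningful for $\psi^{\varepsilon,\rho}$ itself. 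Once these compatibility issues are granted, the conclusion is an immediate instance of Theorem~\ref{theo2}, so no further computation is needed.
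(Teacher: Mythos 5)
Your proposal is correct and follows exactly the paper's own route: the paper proves this corollary by citing Remark~\ref{cor101} for the conditional positive definiteness of the nonnegative combination and then applying Theorem~\ref{theo2} to $\psi^{\varepsilon,\rho}$ directly. Your additional remarks on the compatibility of polynomial reproduction orders and on the native space of the sum are sensible caveats that the paper itself does not discuss.
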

\begin{proof}
The corollary follows immediately from Theorem \ref{theo2} and Remark \ref{cor101}.
\qed
 \end{proof}
}
\section{A meshless collocation scheme utilizing HRKs for linear WSFIEs }\label{sec4}
In this part, we presents an effective procedure utilizing the HRK approach to solve linear
WSFIEs of the second kind. 

\subsection{ One-dimensional  WSFIEs} \label{752}
We firstly discuss the method for one-dimensional WSFIEs. Consider the following one-dimensional WSFIE 
\begin{align}
u(x)- {\color{red}\lambda } \int_a ^b K(x ,t) u(t)dt=f(x), \quad  a\leq x,t \leq b, \label{equ61}
\end{align}
in which the rhs function $f$ and the kernel function
\begin{align*}
K(x,t)=R(x,t)S(x,t) 
\end{align*}
are given, $ \lambda \in \mathbb{R} \setminus \left\{ 0 \right\} $  and  $u$  is the unknown function. We also assume that $S$ is   an enough regular function on $[a,b]\times[a,b],$ and $R$ is a weakly singular function. To simplify our discussion, we can suppose that $a = 0$ and $b = 1$.

 To apply the method, we need the $n$ nodal points in the domain $[0, 1] $ as 
$0 \leqslant x_1 < x_2 < \cdots < x_n \leqslant 1$.
Thus, to solve equation (\ref{equ61}), we approximate $u(x)$   via the HRKs as
\begin{align} \label{DC1}
u(x) \approx \mathcal{Q}_{n} u(x) =\sum_{j=1}^{n} c_j \psi^{\varepsilon, \rho}_j(x), \quad x \in [0, 1],
\end{align}
{\color{red} in which  $ \psi^{\varepsilon, \rho}_j(x), j=1,\ldots,n,$ are defined in (\ref{H11}).}
Substituting  (\ref{DC1})  into  (\ref{equ61}) and then collocating at the points  $\{ x_i\}_{i=1}^{n}$
 we have 
 \begin{equation}\label{eq21300}
\sum_{j=1}^{n} c_j \Bigl( \psi^{\varepsilon, \rho}_j(x_i) -\lambda \int_{0}^{1} K (x_i, t)  \psi^{\varepsilon, \rho}_j(t) \Bigr) dt = f(x_i), \quad i=1,\ldots,n.
\end{equation}
Due to the singular nature of the integrals presented in equation (\ref{eq21300}), it is not possible to calculate them analytically. Additionally, they cannot be approximated using conventional numerical integration methods.
Here, we propose a straightforward yet effective integration rule from  \cite{45,46} that facilitates the calculation of these integrals.

We now {\color{red} recall } a  $m$-point  composite Gauss-Legendre (CGL) integration formula  for weakly singular integrals with $L$ non-uniform subdivisions. Assume that $g$ is  defined on $(0,1)$ and near $y=0$ {\color{red} satisfies }
\begin{align}\label{equ9}
\vert g^{(2m)}(y)\vert\leqslant \mathcal{C} y^{-\sigma-2m}, \quad \text{for some}\quad \sigma\in(0,1),
\end{align}
and for every $y \in(0,1)$. Also, assume {\color{red}  $\{\theta_k\}_{k=1}^{m}$  represent the $m$ zeros } of the Legendre polynomial of degree $m$ on $[-1,1]$ {\color{red} and $\{w_k\}_{k=1}^{m}$ } denote weights for Gauss-Legendre integration formula. Then, for each  integer $L>0$, we have \cite{45,46}
\begin{align}\label{equ10}
\int_0^1 g(y)dy=\sum_{q=1}^L \sum_{k=1}^{m} w_k \dfrac{\Delta h_q}{2}g(\theta_k^q)+\mathcal{O}\bigg(\dfrac{1}{L^{2m}}\bigg),
\end{align}
where
\begin{align*}
\Delta h_q:=h_q-h_{q-1},\quad \bar{h}_q:=\dfrac{h_q+h_{q-1}}{2},\quad \theta_k^q:=\dfrac{\Delta h_q}{2}\theta_k+\bar{h}_q,
\end{align*}
with $h_q:=\bigg(\dfrac{q}{L}\bigg)^s,$ \hspace*{0.05 cm}$s:=\bigg(\dfrac{2m+1}{1-\sigma}\bigg)$

To utilize the numerical integration method (\ref{equ10}) to the singular integrals in equation (\ref{eq21300}), we have
\begin{align*}
\int_{0}^{1} K (x_i, t)  \psi^{\varepsilon, \rho}_j(t) dt= \int_{0}^{x_i} K (x_i, t)  \psi^{\varepsilon, \rho}_j(t) dt +\int_{x_i}^{1} K (x_i, t)   \psi^{\varepsilon, \rho}_j(t) dt.
\end{align*}

By changing the variables $y= \frac{x_i-t}{x_i}$ and $y'= \frac{t-x_i}{1-x_i}$,
we have
\begin{align}\label{equ1800}
\int_{0}^{1} K (x_i, t)  \psi^{\varepsilon, \rho}_j(t) dt & =  x_i \int_{0}^{1} K \Big(x_i,  x_i(1-t) \Big) \psi^{\varepsilon, \rho}_j \Big(x_i(1-t)  \Big) dy \nonumber \\
&+ (1-x_i) \int_{0}^{1} K \Big(x_i,  t'(1-x_i)+x_i \Big) \psi^{\varepsilon, \rho}_j \Big(t'(1-x_i)+x_i \Big) dy'.
\end{align}

For each positive integer $m$  and each sufficiently small positive number $\sigma$, the condition stated in (\ref{equ9}) holds true for all integrals presented in equation (\ref{equ1800}).  Therefore, we can use the numerical quadrature formula (\ref{equ10}). So
\begin{align} \label{1n1}
\int_{0}^{1} K (x_i, t)  \psi^{\varepsilon, \rho}_j(t) dt \approx x_i \sum_{q=1}^{L} \sum_{k=1}^{m}  w_k \dfrac{\Delta h_q}{2}  K \Big(x_i,  x_i(1- \theta_k^q ) \Big) \psi^{\varepsilon, \rho}_j \Big(x_i(1- \theta_k^q )  \Big)
\nonumber \\ + (1-x_i) \sum_{q=1}^{L} \sum_{k=1}^{m}  w_k \dfrac{\Delta h_q}{2} K \Big(x_i,  \theta_k^q (1-x_i)+x_i \Big) \psi^{\varepsilon, \rho}_j \Big(\theta_k^q(1-x_i)+x_i \Big). 
\end{align}
Substituting integration scheme (\ref{1n1}) in system (\ref{eq21300}) yields 
\begin{align} \label{w33}
\sum_{j=1}^{n} \hat{c}_j \Bigg( \psi^{\varepsilon, \rho}_j(x_i) -\lambda x_i \sum_{q=1}^{L} \sum_{k=1}^{m}  w_k \dfrac{\Delta h_q}{2}  K \Big(x_i,  x_i(1- \theta_k^q ) \Big) \psi^{\varepsilon, \rho}_j \Big(x_i(1- \theta_k^q )  \Big)
\nonumber \\  -\lambda (1-x_i) \sum_{q=1}^{L} \sum_{k=1}^{m}  w_k \dfrac{\Delta h_q}{2} K \Big(x_i,  \theta_k^q (1-x_i)+x_i \Big) \psi^{\varepsilon, \rho}_j \Big(\theta_k^q(1-x_i)+x_i \Big) \Bigg) = f(x_i).
\end{align}
By solving linear system (\ref{w33}),  the approximate solution of equation
(\ref{equ61}) is computed as 
\begin{align*}
\hat{u}_{mn} (x) =\sum_{j=1}^{n} \hat{c}_j \psi^{\varepsilon, \rho}_j(x), \quad x \in [0, 1] .
\end{align*}

 \subsection{Two-dimensional WSFIEs} 

 We now apply the HRKs technique to solve the two-dimensional linear WSFIEs in the following form 
\begin{equation}\label{eq61}
u(x,t)-\lambda \int_{\Omega} K(x,t) u(y,s) dy ds=f(x,t), \quad (x,t) \in \Omega,
\end{equation}
in which the rhs function $f$ and the kernel function
\begin{align*}
K(x,t,y,s)=R(x,t,y,s)S(x,t,y,s)
\end{align*}
 are known, {\color{red} $ \lambda \in \mathbb{R} \setminus \left\{ 0 \right\}, $  } $u$  is the unknown function and $\Omega$ is a two-dimensional bounded region.
 Also, we  suppose that $S$ is an enough regular function on $\Omega \times \Omega$, and $R$ is a weakly singular function. 

To solve the  IE (\ref{eq61}), we choose $n$ arbitrary  {\color{red} scattered } nodes on the region $\Omega,$ such as $X=\bigl \{ (x_i, t_i)  \bigr \}_{i=1}^{n}.$ 
In order to use HRKs method, we  estimate $u(x,t)$ as 
\begin{equation}\label{eq211}
u(x,t)\approx \mathcal{Q}_{n} u(x,t) =\sum_{j=1}^{n} c_j \psi^{\varepsilon, \rho}_j(x,t), \quad (x,t) \in \Omega,
\end{equation}
where
\begin{align*} 
\psi^{\varepsilon, \rho}_j(x,t)
& = \Phi^{\varepsilon}_j(x,t)+\rho \varphi_j(x,t) \notag \\
& =\phi_j(\varepsilon\sqrt{(x-x_j)^2+(t-t_j)^2 })+\rho \varpi_j(\sqrt{(x-x_j)^2+(t-t_j)^2 }), \quad j=1, \ldots, n.
\end{align*}
Substituting  (\ref{eq211})  into  (\ref{eq61}) and then collocating at the points in  $X,$ 
 we have 
\begin{equation}\label{eq2300001}
\sum_{j=1}^{n} c_j \Bigl( \psi^{\varepsilon, \rho}_j(x_i,t_i) -\lambda \int_{\Omega} K (x_i, t_i, y, s) \psi^{\varepsilon, \rho}_j(y, s) \Bigr) dy ds = f(x_i, t_i).
\end{equation}
These integrals cannot be calculated analytically, so a special numerical technique of integration is needed.
Assume that
\begin{align*} 
\Omega=\{ (y,s) \in \mathbb{R}^2: a \leqslant y \leqslant b \hspace*{0.215cm} \textit{and} \hspace*{0.215cm} \alpha_{1}(y) \leqslant s \leqslant \alpha_{2}(y) \}, 
\end{align*} 
{\color{red} with }  $a,b \in \mathbb{R}$ and $\alpha_{1}$, $\alpha_{2}$   continuous functions.  To simplify our discussion, we can suppose that $a = 0$ and $b = 1$.
Let us take into account the function $g(y,s)$ defined on $\Omega$ which   {\color{red} satisfies }  the following condition for each positive integer  $m$ {\color{red} with a } small positive value $\mu$:
\begin{align*}
\bigg\vert\dfrac{\partial^{2m}g}{\partial y^{2m}}\bigg\vert<C_1y^{-\mu-2m}.
\end{align*}
Then applying the numerical quadrature formula (\ref{equ10}), we have
\begin{align*} 
\int_\Omega g(y,s)dyds=\int_0^1 \int_{\alpha_1(y)}^{\alpha_2(y)}g(y,s)dyds\simeq\sum_{q=1}^L\sum_{k=1}^{m}w_k\dfrac{\Delta y_q}{2}\int_{\alpha_1(\theta_k^q)}^{\alpha_2(\theta_k^q)}g(\theta_k^q,s)ds.
\end{align*} 
Because the integrand of $g(\theta_k^q,s)$ is a well-behaved function for all $s$, we can use the $m$-point CGL integration formula on $[\alpha_1(\theta_k^q),\alpha_2(\theta_k^q)]$ {\color{red} with equispaced points.} Drawing from the outlined scheme, we introduce the double $m$-point CGL integration formula with $L$ non-uniform subdivisions over the region $\Omega$. The following theorem provides an analysis of its error.
\begin{theorem}\label{theo1}
Assume that $g$ is defined over $\Omega \subseteq[0, 1]\times[0, 1]$ and fulfills \cite{45,46}
\begin{align}\label{equ16} 
\bigg\vert\dfrac{\partial^{2m}g}{\partial y^{2m}}\bigg\vert<C_1y^{-\mu-2m}, \quad \quad
\bigg\vert\dfrac{\partial^{2m}g}{\partial s^{2m}}\bigg\vert<C_2y^{-\mu},
\end{align}
for all $(y,s)\in \Omega$ and $\alpha_1, \alpha_2 \in C^{2m}[0, 1]$. Then, for any integer $L$, we have
\begin{align}\label{equ17}
\int_\Omega g(y,s)dyds=\sum_{q=1}^L\dfrac{\Delta y_q}{2}\sum_{k=1}^{m}w_k\dfrac{\Delta s(\theta_k^q)}{2}\sum_{r=1}^{L_{p,r}}\sum_{p=1}^{m}w_p g(\theta_k^q,\eta_p^r(\theta_k^q))+O\bigg(\dfrac{1}{L^{2m}}\bigg),
\end{align}
where
\begin{align*}
\theta_k^q=\dfrac{\Delta x_q}{2}v_k+\bar{x}_q, \quad \Delta x_q={x}_q-{x}_{q-1}, \quad \text{and} \quad \bar{x}_q=\dfrac{{x}_q+{x}_{q-1}}{2},
\end{align*}
with
\begin{align*}
{x}_q=\bigg(\dfrac{q}{L}\bigg)^s,\quad s=\dfrac{2q_n+1}{1-\mu}, \quad L_{p,r}=1+[L(\alpha_2(\theta_k^q)-\alpha_1(\theta_k^q))],
\end{align*}

\begin{align*}
\Delta s(\theta_k^q)=\dfrac{\alpha_2(\theta_k^q)-\alpha_1(\theta_k^q)}{L_{p,r}} \quad \text{and} \quad \eta_p^r(\theta_k^q)=\dfrac{\Delta s(\theta_k^q)}{2}s_p+\alpha_1(\theta_k^q)+\bigg(r-\dfrac{1}{2}\bigg)\Delta s(\theta_k^q).
\end{align*}
\end{theorem}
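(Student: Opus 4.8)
The plan is to reduce the two-dimensional rule to two successive applications of the one-dimensional graded Gauss--Legendre formula (\ref{equ10}), treating the singular variable $y$ and the regular variable $s$ separately. First I would write the double integral as the iterated integral
\[\int_\Omega g(y,s)\,dy\,ds=\int_0^1 G(y)\,dy,\qquad G(y):=\int_{\alpha_1(y)}^{\alpha_2(y)} g(y,s)\,ds,\]
and apply the graded formula (\ref{equ10}) in the $y$-direction, using the mesh $x_q=(q/L)^{s}$ whose exponent is tuned to the singularity strength $\mu$. This produces the outer sum $\sum_{q=1}^{L}\frac{\Delta y_q}{2}\sum_{k=1}^{m}w_k\,G(\theta_k^q)$ together with a remainder $O(L^{-2m})$, \emph{provided} $G$ satisfies the one-dimensional singularity estimate required by (\ref{equ9}).

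Verifying that estimate for $G$ is the crux of the argument. Since the limits $\alpha_1,\alpha_2\in C^{2m}[0,1]$ depend on $y$, I would differentiate $G$ under the integral sign by the Leibniz rule $2m$ times; this yields a leading interior term $\int_{\alpha_1(y)}^{\alpha_2(y)}\partial_y^{2m} g(y,s)\,ds$ plus boundary contributions built from products of derivatives of $\alpha_1,\alpha_2$ (all bounded, as the $\alpha_i$ are $C^{2m}$) with mixed derivatives of $g$ evaluated at $s=\alpha_i(y)$. The interior term is bounded, using the first hypothesis in (\ref{equ16}), by $(\alpha_2-\alpha_1)\,C_1\,y^{-\mu-2m}=O(y^{-\mu-2m})$, while the boundary terms involve only $s$-derivatives of $g$, which the second hypothesis $\vert\partial_s^{2m}g\vert<C_2\,y^{-\mu}$ controls at the weaker order $y^{-\mu}$. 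Collecting the terms gives $\vert G^{(2m)}(y)\vert\le \mathcal{C}\,y^{-\mu-2m}$ near $y=0$, which is exactly condition (\ref{equ9}) for the outer quadrature.

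For each outer node $\theta_k^q$ the map $s\mapsto g(\theta_k^q,s)$ is free of singularities, its $2m$-th $s$-derivative being bounded by $C_2(\theta_k^q)^{-\mu}$, so I would approximate the inner integral $G(\theta_k^q)$ by the ordinary composite Gauss--Legendre rule on $[\alpha_1(\theta_k^q),\alpha_2(\theta_k^q)]$ with $L_{p,r}$ equispaced panels and nodes $\eta_p^r(\theta_k^q)$. Standard composite Gauss error analysis with $m$ points per panel and panel width $\Delta s(\theta_k^q)=O(1/L)$ gives an inner error of size $O\bigl((\theta_k^q)^{-\mu}L^{-2m}\bigr)$ per node. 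Substituting this triple sum for the exact values $G(\theta_k^q)$ produces the quadrature (\ref{equ17}).

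It remains to assemble the two error sources. The outer remainder is already $O(L^{-2m})$. The accumulated inner error is $\sum_{q,k}\frac{\Delta y_q}{2}w_k\,O\bigl((\theta_k^q)^{-\mu}L^{-2m}\bigr)$, in which the factor $\sum_{q,k}\frac{\Delta y_q}{2}w_k(\theta_k^q)^{-\mu}$ is itself the graded quadrature (\ref{equ10}) applied to $y^{-\mu}$; since $\mu\in(0,1)$ the integral $\int_0^1 y^{-\mu}\,dy$ is finite and this sum stays bounded uniformly in $L$. Hence the total inner contribution is also $O(L^{-2m})$, and adding the two remainders yields the stated bound $O(L^{-2m})$. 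The main obstacle is the Leibniz-rule estimate of the second paragraph: tracking the boundary terms and confirming that none is more singular than $y^{-\mu-2m}$, so that the reduction to the one-dimensional rule (\ref{equ10}) is legitimate.
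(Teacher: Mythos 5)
The paper itself offers no proof of this theorem: it is stated as a quadrature result imported from \cite{45,46}, and the text moves on immediately to applying it. So there is no in-paper argument to compare yours against. Your overall strategy --- writing the double integral as $\int_0^1 G(y)\,dy$ with $G(y)=\int_{\alpha_1(y)}^{\alpha_2(y)}g(y,s)\,ds$, applying the graded rule (\ref{equ10}) in the singular variable, the ordinary composite Gauss--Legendre rule in $s$ at each outer node, and then summing the two error contributions with the observation that $\sum_{q,k}\frac{\Delta y_q}{2}w_k(\theta_k^q)^{-\mu}$ stays bounded because $\int_0^1 y^{-\mu}dy<\infty$ --- is the standard and correct route, and it is essentially the construction the cited references carry out.

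There is, however, a genuine gap at the step you yourself identify as the crux. After $2m$ applications of the Leibniz rule, the boundary contributions to $G^{(2m)}(y)$ are sums of products of derivatives of $\alpha_1,\alpha_2$ with \emph{mixed} partials $\partial_y^{j}\partial_s^{l}g\bigl(y,\alpha_i(y)\bigr)$ for $j+l\le 2m-1$: differentiating $g(y,\alpha_i(y))$ once already produces $\partial_y g+\alpha_i'\,\partial_s g$, and iterating generates all intermediate mixed orders. Your claim that these terms ``involve only $s$-derivatives of $g$'' is therefore false, and the hypotheses (\ref{equ16}) bound only the two pure derivatives $\partial_y^{2m}g$ and $\partial_s^{2m}g$ --- they say nothing about $\partial_y^{j}\partial_s^{l}g$ for $0<j+l<2m$ or for mixed indices. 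As written, the estimate $\vert G^{(2m)}(y)\vert\le\mathcal{C}y^{-\mu-2m}$ does not follow from the stated assumptions; one needs an additional hypothesis of the form $\vert\partial_y^{j}\partial_s^{l}g\vert\le C\,y^{-\mu-j}$ (or similar) on the intermediate derivatives, which is what the underlying references effectively assume for the kernels they treat. The rest of your argument --- the inner composite Gauss error of size $O\bigl((\theta_k^q)^{-\mu}L^{-2m}\bigr)$ per node and the uniform boundedness of the weighted sum --- is sound once that estimate on $G^{(2m)}$ is secured.
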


It is evident that the integrals in equation (\ref{eq2300001}) cannot be estimated using the integration formula (\ref{equ17}), as the singularity arises at the points $(x_j,t_j)$. In order to overcome this difficulty, we  divide  these integrals into  two distinct parts as follows:
\begin{align*}
I_i&=\int_\Omega K \big(x_i,t_i,y,s \big) \psi_j^{\varepsilon,\rho}(y,s) dsdy \\ &=\int_0^{x_i}\int_{\alpha_1(y)}^{\alpha_2(y)}K \big(x_i,t_i,y,s \big) \psi_j^{\varepsilon,\rho} \big(y,s \big)dsdy 
+\int_{x_i}^1\int_{\alpha_1(y)}^{\alpha_2(y)}K \big(x_i,t_i,y,s \big) \psi_j^{\varepsilon,\rho} \big(y,s \big) dsdy.
\end{align*}
Now, let
\begin{align*}
y=(1-\gamma)x_i \quad \text{and} \quad y=x_i+(1-x_i)\xi .
\end{align*}
With these  changes of variables, we have
\begin{align*}
I_i&=x_i\underbrace{\int_0^1\int_{\alpha_1((1-\gamma)x_i )}^{\alpha_2((1-\gamma)x_i )}}_{\omega_1}K_1 \bigg(x_i,t_i,(1-\gamma)x_i ,s \bigg) \psi_j^{\varepsilon,\rho} \bigg((1-\gamma)x_i ,s \bigg) d\gamma ds \\
&+(1-x_i)\underbrace{\int_0^1\int_{\alpha_1(x_i+(1-x_i)\xi)}^{\alpha_2(x_i+(1-x_i)\xi )}}_{\omega_2}K_2 \bigg(x_j,t_j, x_i+(1-x_i)\xi,s \bigg) \psi_j^{\varepsilon,\rho} \bigg(x_i+(1-x_i)\xi ,s \bigg)d\xi ds.
\end{align*}
It is important to point out that $K_1$ and $K_2$ exhibit singularities at  $s = t_i, \gamma=0$ and $ s = t_i, \xi=0$, respectively. 
Therefore, these functions fulfill the conditions outlined in (\ref{equ16}) for every positive integer $m$ and for every sufficiently small positive value of $\mu$.
Consequently, by applying the quadrature formula (\ref{equ17}) on $\omega_1$ and $\omega_2$, we have

\begin{align*} 
I_i &\approx x_i\sum_{q=1}^L\dfrac{\Delta y_q}{2}\sum_{k=1}^{m}w_k\dfrac{\Delta s_1(\theta_k^q)}{2}\sum_{r=1}^{L_{p,r,1}}\sum_{p=1}^{m}w_p K_1\bigg(x_i,t_i, (1-\theta_k^q) x_i,\eta_{p,1}^r(\theta_k^q) \bigg) \psi_j^{\varepsilon,\rho}  \bigg((1-\theta_k^q) x_i,\eta_{p,1}^r(\theta_k^q)  \bigg) \nonumber \\
&+(1-x_i)\sum_{q=1}^L\dfrac{\Delta y_q}{2}\sum_{k=1}^{m}w_k\dfrac{\Delta s_2(\theta_k^q)}{2}\sum_{r=1}^{L_{p,r,2}}\sum_{p=1}^{m}w_p K_2\bigg(x_i,t_i, x_i+ (1-x_i)\theta_k^q ,\eta_{p,2}^r(\theta_k^q) \bigg) \psi_j^{\varepsilon,\rho}  \bigg(x_i+ (1-x_i)\theta_k^q,\eta_{p,2}^r(\theta_k^q)  \bigg).
\end{align*} 

As a result,  the linear system (\ref{eq2300001}) {\color{red} becomes }
\begin{footnotesize}
\begin{align*}
&\sum_{j=1}^n\hat{c}_j \Bigg( \psi_j^{\varepsilon,\rho}(x_i,t_i)-\lambda x_i\sum_{q=1}^L\dfrac{\Delta y_q}{2}\sum_{k=1}^{m}w_k\dfrac{\Delta s_1(\theta_k^q)}{2}\sum_{r=1}^{L_{p,r,1}}\sum_{p=1}^{m}w_p K_1\bigg(x_i,t_i, (1-\theta_k^q) x_i,\eta_{p,1}^r(\theta_k^q) \bigg) \psi_j^{\varepsilon,\rho}  \bigg((1-\theta_k^q) x_i,\eta_{p,1}^r(\theta_k^q)  \bigg) \nonumber\\
&- \lambda(1-x_i)\sum_{q=1}^L\dfrac{\Delta y_q}{2}\sum_{k=1}^{m}w_k\dfrac{\Delta s_2(\theta_k^q)}{2}\sum_{r=1}^{L_{p,r,2}}\sum_{p=1}^{m}w_p K_2\bigg(x_i,t_i, x_i+ (1-x_i)\theta_k^q ,\eta_{p,2}^r(\theta_k^q) \bigg) \psi_j^{\varepsilon,\rho}  \bigg(x_i+ (1-x_i)\theta_k^q,\eta_{p,2}^r(\theta_k^q)  \bigg) \Bigg)=f(x_i,t_i).
\end{align*}
\end{footnotesize}

After solving {\color{red} it,} the approximate solution is calculated as 
\begin{align*} 
\hat{u}_{mn}(x,t)=\sum_{j=1}^n\hat{c}_j \psi_j^{\varepsilon,\rho}(x,t), \quad (x,t)\in \Omega.
\end{align*} 
{\color{red} \begin{remark} 
 Let the domain $\Omega$ be described as 
 \begin{align*} 
\Omega=\{ (y,s) \in \mathbb{R}^2: c \leqslant s \leqslant d \hspace*{0.215cm} \textit{and} \hspace*{0.215cm} \Upsilon_{1}(s) \leqslant y \leqslant \Upsilon_{2}(s) \}, 
\end{align*} 
 with $c,d \in \mathbb{R}$ and $\Upsilon_{1}$, $\Upsilon_{2}$   continuous functions.
 In this case, the computations are similarly performed,
but the variables are commuted. 
 \end{remark} }
\subsection{High-dimensional  WSFIEs  }
Here, we expand  the previous approach to approximate  high-dimensional WSFIE \ref{eq1}. 

In order to accomplish this, we require  $n$   {\color{red} scattered}
nodes $\mathit{X}=\{ \mathbfit{x}_i\}_{i=1}^{n}=\bigl \{ (x_{1_{i}}, \ldots, x_{d_{i}})  \bigr \}_{i=1}^{n}$ on the region $\Omega.$ 
The unknown function $u$ in equation  (\ref{eq1}) can be approximated via  HRKs   as
\begin{align*} 
u(x_1,  \ldots, x_d)\approx   \mathcal{Q}_{n} u(x_1,  \ldots, x_d) =\sum_{j=1}^{n} c_j \psi^{\varepsilon, \rho}_j(x_1,  \ldots, x_d), \quad (x_1,  \ldots, x_d) \in \Omega \subset \mathbb{R}^d,
\end{align*} 
where 
\begin{align}\label{eq22}
\psi^{\varepsilon, \rho}_j(x_1,  \ldots, x_d)
& = \Phi^{\varepsilon}_j(x_1, \ldots, x_d)+\rho \varphi_j(x_1, \ldots, x_d) \notag \\
& =\phi_j \Bigg(\varepsilon \sqrt{ \sum_{i=1}^{d} (x_i -x_{{i}_{j}})^2  } \Bigg)+\rho \varpi_j \Bigg(\sqrt{ \sum_{i=1}^{d} (x_i -x_{{i}_{j}})^2  } \Bigg), \quad  j=1, \ldots, n.
\end{align}
Then, from equations (\ref{eq22}) and (\ref{eq1}) we have
 \begin{align*} 
 \sum_{j=1}^{n} c_j  \Bigg( \psi^{\varepsilon, \rho}_j \Bigl(x_1,  \ldots, x_d \Bigr) -\lambda \int_{\Omega} K \Bigl(x_1,  \ldots, x_d, t_1,  \ldots, t_d \Bigr)  \psi^{\varepsilon, \rho}_j \Bigl(t_1,  \ldots, t_d \Bigr) \Bigg) dt_1 \ldots dt_d= f(x_1,  \ldots, x_d).
\end{align*} 
Now, we pick collocation points $\bigl \{ (x_{1_{i}}, \ldots, x_{d_{i}})  \bigr \}_{i=1}^{n}$  to transform this into a system of equations 
\begin{equation}\label{eq230}
  \sum_{j=1}^{n} c_j \Bigg( \psi^{\varepsilon, \rho}_j \Bigl(x_{{1}_{i}}, \ldots, x_{{d}_{i}} \Bigr) -\lambda \int_{\Omega} K \Bigl(x_{{1}_{i}}, \ldots, x_{{d}_{i}}, t_1,  \ldots, t_d \Bigr) \psi^{\varepsilon, \rho}_j \Bigl(t_1,  \ldots, t_d \Bigr) \Bigg) dt_1 \ldots dt_d= f(x_{{1}_{i}}, \ldots, x_{{d}_{i}}).
\end{equation}
Due to the singular nature of the integrals in equation (\ref{eq230}) at the point $(x_{{1}_{i}}, \ldots, x_{{d}_{i}})$, it is not possible to calculate them using classical numerical integration methods.
In order to approximate the integrals  {\color{red} appearing in } (\ref{eq230}), we  employ the  $m$-point CGL integration rule with $L$ non-uniform subdivisions  over $\Omega$ relative to  the quadrature nodes $ \mathbfit{t}_k = ( t_{k_1}, \ldots, t_{k_d} ) $  weights $ w_k$.
 It is supposed that \cite{10}
  \begin{footnotesize}
\begin{equation}\label{eq25}
\int_{\Omega} K \Bigl(x_{{1}_{i}}, \ldots, x_{{d}_{i}}, t_1,  \ldots, t_d \Bigr)    \psi^{\varepsilon, \rho}_j \Bigl(t_1,  \ldots, t_d \Bigr)   d\mathbfit{t}  \approx      \sum_{q_1=1}^{L} \sum_{k_1=1}^{m} \ldots \sum_{q_d=1}^{L} \sum_{k_d=1}^{m} \tilde{\mathbf{w}}_k K \Bigl(x_{{1}_{i}}, \ldots, x_{{d}_{i}}, \eta_{k_1}^{q_1}, \ldots, \eta_{k_d}^{q_d} \Bigr) \psi^{\varepsilon, \rho}_j \Bigl(\eta_{k_1}^{q_1}, \ldots, \eta_{k_d}^{q_d} \Bigr), 
\end{equation}
 \end{footnotesize}
where $\tilde{\mathbf{w}}_k$  and $ (\eta_{k_1}^{q_1}, \ldots, \eta_{k_d}^{q_d})  $
are calculated  via  the weights $w_k$ and the  points $  (t_1,  \ldots, t_d)$.
Using integration scheme (\ref{eq25}) in  (\ref{eq230}), {\color{red} it holds}
 \begin{footnotesize}
 \begin{equation}\label{eq240}
  \sum_{j=1}^{n} \hat{c}_j \Biggl( \psi^{\varepsilon, \rho}_j \Bigl(x_{{1}_{i}}, \ldots, x_{{d}_{i}} \Bigr) -\lambda \sum_{q_1=1}^{L} \sum_{k_1=1}^{m} \ldots \sum_{q_d=1}^{L} \sum_{k_d=1}^{m} \tilde{\mathbf{w}}_k K \Bigl(x_{{1}_{i}}, \ldots, x_{{d}_{i}}, \eta_{k_1}^{q_1}, \ldots, \eta_{k_d}^{q_d} \Bigr) \psi^{\varepsilon, \rho}_j \Bigl(\eta_{k_1}^{q_1}, \ldots, \eta_{k_d}^{q_d} \Bigr) \Biggr)= f(x_{{1}_{i}}, \ldots, x_{{d}_{i}}).
\end{equation}
 \end{footnotesize}
This provides a {\color{red}system of linear equations.}
Once the coefficients $\{\hat{c}_1, \hat{c}_2, \ldots, \hat{c}_n \}$ are determined, the approximate solution can be derived by 
\begin{align*}
 \hat{u}_{mn} (x_1,  \ldots, x_d) =\sum_{j=1}^{n} \hat{c}_j \psi^{\varepsilon, \rho}_j(x_1,  \ldots, x_d), \quad (x_1,  \ldots, x_d ) \in \Omega .
\end{align*}

 {\color{red}
 \section{Computational complexity} \label{sec63}
 Here, we investigate the computational complexity of the scheme to measure the usage time of the algorithm presented in the current work. A simplified analysis can be performed to estimate the number of primitive operation by counting the steps of pseudo-code or the statements of high level language executed \cite{461}.
 To this aim, we consider a pseudo-code description of the method for
solving WSFIEs in Algorithm 1. The computational complexity of the algorithm discussed in this paper is determined using ``Big-Oh'' notation which is defined as follows: 
 \begin{definition}
 Let $f(n)$ and $g(n)$ be functions mapping nonnegative integers to real numbers.
We say that $f(n)$ is $\mathcal{O}(g(n))$ if there is a real constant $c>0$ and an integer constant $n_0 \geqslant 1$ such that $f(n) \leqslant cg(n)$  for every integer $n \geqslant n_0$. This definition is referred to as `Big-Oh' notation.
Also, we can also say `$f(n)$ is order $g(n)$'. 
 \end{definition}
 It is easy to see that the maximum number of operations appears in the for-loop between the lines 21-23 in Algorithm 1 and the computational complexity in these lines overcomes other steps of the algorithm. Therefore, we can only compute the number of primitive operations in the mentioned for-loop to give the computational complexity which is 
\[   \mathcal{O}(nF(n, m, L, d)),  \]
where $F(n, m, L,d)$ is the cost of calculating the value $T(x_1,  \ldots, x_d)$. 
From lines 6-20 of Algorithm 1, we find that the number of operations for computing $T(x_1,  \ldots, x_d)$ is
 \[ \mathcal{O}(nQ(m, d)), \]
 where $Q(m, d)$ is the cost of calculating to estimate $I_j(x_1,  \ldots, x_d)$ which is of order $m^dL^d.$ 
Finally, it results that the complexity of proposed algorithm is 
\[   \mathcal{O}(n^2m^dL^d).  \]

}

\begin{algorithm}      {\color{red}
  \caption{\quad A brief overview of the proposed method }
  \hspace*{\algorithmicindent} \textbf{Input:} \text{ Dimension of problem: $d$ } \\   \hspace*{1.75cm} \text{ Number of points: $n$  }
    \\   \hspace*{1.75cm} \text{ Number of integration points: $m$ }
    \\   \hspace*{1.75cm} \text{ Number of integration  subdivisions: $L$ }
  \\   \hspace*{1.75cm} \text{  Scattered points:  $\bigl \{ (x_{1_{i}}, \ldots, x_{d_{i}})  \bigr \}_{i=1}^{n}$ } 
            \begin{algorithmic}[1]
    \State \text{$S(x_1,  \ldots, x_d) \leftarrow 0$ } \\
     \textbf{for} { $j=1$ \textbf{to} $n$ \textbf{do}}
     \State $S(x_1,  \ldots, x_d) \leftarrow S(x_1,  \ldots, x_d)+ \hat{c}_j \psi_j^{\varepsilon, \rho}(x_1,  \ldots, x_d) $ \\  
    \textbf{end for} \\
     $T(x_1,  \ldots, x_d) \leftarrow  0$ \\
     \textbf{for} { $j=1$ \textbf{to} $n$ \textbf{do}} \\
        $I_j(x_1,  \ldots, x_d) \leftarrow  0$ \\
     \quad  \textbf{for} { $q_1=1$ \textbf{to} $L$ \textbf{do}}\\
  \quad    \quad  \textbf{for} { $k_1=1$ \textbf{to} $m$ \textbf{do}}\\
 \quad \quad      \quad { $\vdots$} \\
  \quad \quad  \quad  \textbf{for} { $q_d=1$ \textbf{to} $L$ \textbf{do}}\\
  \quad \quad  \quad    \quad  \textbf{for} { $k_d=1$ \textbf{to} $m$ \textbf{do}}\\
    \quad \quad  \quad  \quad  \quad  $I_j(x_1,  \ldots, x_d) \leftarrow  I_j(x_1,  \ldots, x_d)+\tilde{\mathbf{w}}_k K \Bigl(x_1,  \ldots, x_d, \eta_{k_1}^{q_1}, \ldots, \eta_{k_d}^{q_d} \Bigr) \psi^{\varepsilon, \rho}_j \Bigl(\eta_{k_1}^{q_1}, \ldots, \eta_{k_d}^{q_d} \Bigr)$
   \\  
     \quad \quad  \quad    \quad \textbf{end for} \\
 \quad  \quad    \quad \textbf{end for} \\
  \quad \quad      \quad { $\vdots$} \\
  \quad \quad  \textbf{end for} \\
   \quad  \textbf{end for} 
   \State $T(x_1,  \ldots, x_d) \leftarrow T(x_1,  \ldots, x_d)+ \hat{c}_j I_j(x_1,  \ldots, x_d)$ \\
   \textbf{end for} \\
      \textbf{for}  { $i=1$ \textbf{to} $n$ \textbf{do}}
        \State $F_i \leftarrow S(x_{{1}_{i}}, \ldots, x_{{d}_{i}})-\lambda T(x_{{1}_{i}}, \ldots, x_{{d}_{i}})=f(x_{{1}_{i}}, \ldots, x_{{d}_{i}}) $ \\
    \textbf{end for} 
       \State \textbf{solve $\{F_1, \ldots, F_n \}$}
          \State $\hat{u}_{mn}(x_1,  \ldots, x_d) \leftarrow $  \textbf{assign } $\{ \hat{c}_1, \ldots, \hat{c}_n \} $ \textbf{in }      \text{$S(x_1,  \ldots, x_d) $ }
    \Statex \textbf{Output:}  $\hat{u}_{mn}(x_1,  \ldots, x_d)$ 
  \end{algorithmic}
}
\end{algorithm}

\section{Convergence analysis of the proposed Hybrid Kernel Method} \label{sec6}
 In compact form, we rewrite the WSIE (\ref{eq1}) as  
  \begin{align*} 
 (I- \lambda \mathcal{F})u=f,
 \end{align*} 
in which the linear integral operator $\mathcal{F}:C(\Omega)\longrightarrow C(\Omega)$   is defined as 

  \begin{align}\label{equ33}
 \mathcal{F}u(\mathbfit{x})=\int_{\Omega} \mathcal{R}(\mathbfit{x}, \mathbfit{t}) \mathcal{S}(\mathbfit{x}, \mathbfit{t})u(\mathbfit{t}) d\mathbfit{t}.
 \end{align}
 We will now explore the concept of compact operators along with several significant theorems related to them.
 \begin{definition} \cite{8} \label{def2}
 A linear operator $\mathcal{F}: C(\Omega) \longrightarrow C(\Omega)$ {\color{red} is said to be } compact if the set $\{\mathcal{F}u: \Vert u\Vert \leqslant 1\}$ has compact closure in $C(\Omega)$.
 \end{definition}
 
 \begin{theorem} \cite{A1} \label{theo2222}
 Compact linear operators are bounded.
 \end{theorem}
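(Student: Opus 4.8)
The plan is to unpack the definition of a compact operator and combine it with the elementary topological fact that a set with compact closure in a normed space is bounded, then upgrade this to a global operator bound using the positive homogeneity of $\mathcal{F}$.

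First I would set $B:=\{\mathcal{F}u: \Vert u\Vert \leqslant 1\}$, the image of the closed unit ball of $C(\Omega)$. By the definition of compactness, the closure $\overline{B}$ is a compact subset of $C(\Omega)$. Next I would recall that every compact subset of a metric space is bounded: applying the finite-subcover property to the nested open cover $\{v: \Vert v\Vert < N\}$, $N\in\mathbb{N}$, of $\overline{B}$ produces an index $N_0$ with $\overline{B}\subseteq\{v:\Vert v\Vert < N_0\}$. Setting $M:=N_0$, we obtain $\Vert v\Vert \leqslant M$ for every $v\in\overline{B}$, and in particular $\Vert \mathcal{F}u\Vert \leqslant M$ whenever $\Vert u\Vert \leqslant 1$.

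Finally I would transfer this uniform bound on the unit ball into boundedness of $\mathcal{F}$. For an arbitrary nonzero $u\in C(\Omega)$ the normalized element $u/\Vert u\Vert$ has norm one, so the previous step gives $\Vert \mathcal{F}(u/\Vert u\Vert)\Vert \leqslant M$; by linearity this rearranges to $\Vert \mathcal{F}u\Vert \leqslant M\Vert u\Vert$, an inequality that also holds trivially for $u=0$. Hence $\mathcal{F}$ is bounded with operator norm at most $M$. The argument presents no genuine obstacle; the only point deserving care is the passage from ``compact closure'' to ``bounded'', which rests entirely on total boundedness (equivalently, on extracting a finite subcover from the family of balls centered at the origin), with the remainder following from linearity alone.
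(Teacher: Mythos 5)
Your proof is correct and complete: the passage from compact closure of $\{\mathcal{F}u:\Vert u\Vert\leqslant 1\}$ to a uniform bound $M$ on that set via a finite subcover of the nested balls, followed by the homogeneity argument $\Vert \mathcal{F}u\Vert=\Vert u\Vert\,\Vert \mathcal{F}(u/\Vert u\Vert)\Vert\leqslant M\Vert u\Vert$, is exactly the standard textbook argument. The paper itself supplies no proof for this statement --- it is quoted from the cited reference on linear integral equations --- so there is nothing to contrast your route with; your write-up fills in precisely the argument that the citation points to.
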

 
  \begin{theorem} \cite{A1} \label{theo3}
 The weakly singular integral operator  (\ref{equ33}) is a compact operator on $C(\Omega)$.
 \end{theorem}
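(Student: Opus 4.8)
The plan is to prove compactness of $\mathcal{F}$ by exhibiting it as the operator-norm limit of a sequence of integral operators with \emph{continuous} kernels, each of which is compact by the Arzel\`a--Ascoli theorem, and then invoking the fact that a norm limit of compact operators on a Banach space is again compact. First I would regularise the singular kernel away from the diagonal. Choose a continuous cut-off $h_n:[0,\infty)\to[0,1]$ with $h_n(r)=0$ for $r\leqslant \tfrac{1}{2n}$, $h_n(r)=1$ for $r\geqslant \tfrac{1}{n}$, and $h_n$ affine in between, and set
\begin{align*}
K_n(\mathbfit{x},\mathbfit{t}):=h_n\!\left(\Vert\mathbfit{x}-\mathbfit{t}\Vert\right)\,\mathcal{R}(\mathbfit{x},\mathbfit{t})\,\mathcal{S}(\mathbfit{x},\mathbfit{t}).
\end{align*}
Since $h_n$ vanishes in a neighbourhood of the diagonal $\{\mathbfit{x}=\mathbfit{t}\}$ while $\mathcal{R}\mathcal{S}$ is continuous off the diagonal, $K_n$ is continuous on the compact set $\Omega\times\Omega$. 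Denote by $\mathcal{F}_n$ the integral operator with kernel $K_n$.

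Next I would verify that each $\mathcal{F}_n$ is compact. For $u$ with $\Vert u\Vert_\infty\leqslant 1$, the family $\{\mathcal{F}_n u\}$ is uniformly bounded because $K_n$ is bounded and $\Omega$ has finite measure, and it is equicontinuous because
\begin{align*}
\left|\mathcal{F}_n u(\mathbfit{x})-\mathcal{F}_n u(\mathbfit{x}')\right|\leqslant |\Omega|\,\sup_{\mathbfit{t}\in\Omega}\left|K_n(\mathbfit{x},\mathbfit{t})-K_n(\mathbfit{x}',\mathbfit{t})\right|,
\end{align*}
whose right-hand side is small once $\Vert\mathbfit{x}-\mathbfit{x}'\Vert$ is small, by the uniform continuity of $K_n$ on the compact set $\Omega\times\Omega$. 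Arzel\`a--Ascoli then renders $\{\mathcal{F}_n u:\Vert u\Vert_\infty\leqslant 1\}$ relatively compact, so $\mathcal{F}_n$ is compact.

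The crux is the third step, namely $\mathcal{F}_n\to\mathcal{F}$ in operator norm. For $\Vert u\Vert_\infty\leqslant 1$,
\begin{align*}
\left|(\mathcal{F}-\mathcal{F}_n)u(\mathbfit{x})\right|\leqslant\int_{\Omega}\bigl(1-h_n(\Vert\mathbfit{x}-\mathbfit{t}\Vert)\bigr)\,|\mathcal{R}(\mathbfit{x},\mathbfit{t})|\,|\mathcal{S}(\mathbfit{x},\mathbfit{t})|\,d\mathbfit{t}\leqslant\Vert\mathcal{S}\Vert_\infty\int_{\Vert\mathbfit{x}-\mathbfit{t}\Vert\leqslant 1/n}|\mathcal{R}(\mathbfit{x},\mathbfit{t})|\,d\mathbfit{t}.
\end{align*}
Invoking the weak-singularity hypothesis on $\mathcal{R}$ (a logarithmic singularity, or a power singularity $\Vert\mathbfit{x}-\mathbfit{t}\Vert^{\varsigma}$ with $-1<\varsigma<0$, more generally $\Vert\mathbfit{x}-\mathbfit{t}\Vert^{-\gamma}$ with $0\leqslant\gamma<d$), I would enlarge the integration region to the full ball $B(\mathbfit{x},1/n)$ and pass to polar coordinates, so that the last integral is bounded by a constant times $\int_0^{1/n} r^{\varsigma+d-1}\,dr=\mathcal{O}(n^{-(\varsigma+d)})$, uniformly in $\mathbfit{x}\in\Omega$ because $\varsigma+d>0$. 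Taking the supremum over $\mathbfit{x}$ and over $\Vert u\Vert_\infty\leqslant 1$ gives $\Vert\mathcal{F}-\mathcal{F}_n\Vert\to 0$. The same estimate shows $\mathcal{F}_n u\to\mathcal{F}u$ uniformly for each fixed $u$, so $\mathcal{F}u$ is a uniform limit of the continuous functions $\mathcal{F}_n u$ and is therefore continuous; hence $\mathcal{F}$ maps $C(\Omega)$ into $C(\Omega)$. Since the compact operators form a norm-closed subspace of the bounded operators on the Banach space $C(\Omega)$, the norm limit $\mathcal{F}$ of the compact operators $\mathcal{F}_n$ is itself compact, as claimed.

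I expect the main obstacle to be precisely this uniform near-diagonal estimate: the integrability of the singularity must be quantified carefully and shown to be uniform in $\mathbfit{x}$, treating both the logarithmic and the $\varsigma$-power kernels and verifying that the exponent condition $\varsigma+d>0$ (which holds for every $d\geqslant 1$ whenever $\varsigma\in(-1,0)$) genuinely secures convergence of the radial integral. Everything else — the continuity of $K_n$, the Arzel\`a--Ascoli argument, and the closedness of the class of compact operators — is standard once this estimate is in hand.
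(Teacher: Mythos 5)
Your proposal is correct, and it is essentially the argument of the source the paper cites for this statement (Kress, \emph{Linear Integral Equations}); the paper itself gives no proof beyond that citation. The route you take --- truncating the kernel off the diagonal, establishing compactness of the continuous-kernel approximants via Arzel\`a--Ascoli, and obtaining operator-norm convergence from the uniform integrability of the logarithmic and power-type singularities --- is exactly the standard proof, and your quantitative near-diagonal estimate is carried out correctly.
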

 
 Now applying the double $m$-point CGL integration scheme with $L$   subdivisions over the region $\Omega$ {\color{red} given in } (\ref{eq25}), we define a sequence of numerical weakly singular  integral operators $\mathcal{F}_m$ on $C^{(2m)}(\Omega)$ via 
\begin{align*} 
\mathcal{F}_mu(\mathbf{x}):=\lambda \sum_{k=1}^{m} \tilde{\mathbf{w}}_k K \Bigl(x_{{1}_{i}}, \ldots, x_{{d}_{i}}, \eta_{1_k}^{q}, \ldots, \eta_{d_k}^{q} \Bigr)  \psi^{\varepsilon, \rho}_j \Bigr(\eta_{1_k}^{q}, \ldots, \eta_{d_k}^{q}\Bigr), \qquad m \geq 1.
\end{align*} 
 It is crucial to note that{\color{red} $\{\mathcal{F}_m \}$ is a succession of collectively compact operators } that converges pointwise \cite{45, 46}. For each $u \in C^{(2m)}(\Omega)$ with a weakly singular kernel  $K$, we have \cite{45} 
\begin{equation}\label{eq31}
\Vert \mathcal{F}-\mathcal{F}_m \Vert \leq \frac{C_n}{L^{2m}} 
\sup_{\mathbfit{x} \in \Omega} \vert u^{(2m)}(\mathbfit{x}) \vert,
\end{equation}
where $C_n >0$ is constant.
 
 Consequently, the ultimate system  (\ref{eq240}) can be viewed as 
  \begin{align*} 
  (I-\lambda\mathcal{Q}_{n}\mathcal{F}_m)\hat{u}_{mn}=\mathcal{Q}_{n}f,
  \end{align*} 
  where $\hat{u}_{mn}$ is the numerical solution of the IE (\ref{eq1}) applying the {\color{red} proposed} technique. \\
The iterated discrete collocation solution $\bar{u}_{mn}$  is introduced by
  \begin{align}\label{equ41}
  \bar{u}_{mn}=f+\lambda\mathcal{Q}_{n}\mathcal{F}_m\hat{u}_{mn},
  \end{align}
  then it is easily seen that
  \begin{align*} 
  \mathcal{Q}_{n} \bar{u}_{mn}=\hat{u}_{mn},
  \end{align*} 
  and so
  \begin{align*} 
  (I-\lambda\mathcal{Q}_{n}\mathcal{F}_m)\bar{u}_{mn}=\mathcal{Q}_{n}f.
  \end{align*} 
We present the  theorem below, which addresses the error analysis of iterated collocation scheme \citep{8}.

    \begin{theorem}\label{theo5}
Let $\{\mathcal{Q}_n\}$  be a family of interpolatory projection operators on $C(\Omega)$ and suppose
\[   \mathcal{Q}_n u\longrightarrow u \quad  \textit{as } \quad n\longrightarrow\infty,    \]
for all $u \in  C(\Omega)$ and $u^\star$ be a unique solution of equation (\ref{eq1}). 
Then, $(I-\lambda\mathcal{F}_m\mathcal{Q}_n)^{-1}$ exists for all  $m, n$ sufficiently large, say $m, n >  \mathfrak{T}$ , and is uniformly bounded. Additionally, {\color{red} it holds}
 \begin{align*} 
  \Vert \bar{u}_{mn} - u^\star \Vert_{L^{\infty}(\Omega)}\leqslant \Vert (I-\lambda\mathcal{F}_m\mathcal{Q}_n)^{-1} \Vert \Vert \mathcal{F}u^\star -\mathcal{F}_m \mathcal{Q}_n u^\star\Vert_{L^{\infty}(\Omega)}, \quad m, n \geqslant  \mathfrak{T}.
  \end{align*}

 \end{theorem}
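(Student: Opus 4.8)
The plan is to embed the statement in the collectively compact operator approximation theory of Anselone, in the form presented in \cite{8}. The two structural facts I would secure at the outset are that the composite family $\{\mathcal{F}_m\mathcal{Q}_n\}$ is collectively compact and that it converges pointwise to $\mathcal{F}$ on $C(\Omega)$; the invertibility, the uniform bound, and the error estimate then follow from the abstract machinery together with a short residual computation.

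First I would note that the interpolatory projections are uniformly bounded: since $\mathcal{Q}_n u\to u$ for every $u\in C(\Omega)$, the Banach--Steinhaus theorem furnishes a constant $M$ with $\sup_n\Vert\mathcal{Q}_n\Vert\leqslant M$. The family $\{\mathcal{F}_m\}$ is collectively compact (hence uniformly bounded, say by $M'$) and converges pointwise to the compact operator $\mathcal{F}$ of Theorem \ref{theo3}. From these I would deduce the two composite properties. For collective compactness I would use the inclusion
\[ \{\mathcal{F}_m\mathcal{Q}_n u:\Vert u\Vert\leqslant 1\}\subseteq\{\mathcal{F}_m v:\Vert v\Vert\leqslant M\}, \]
whose right-hand side is relatively compact by the collective compactness of $\{\mathcal{F}_m\}$. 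For pointwise convergence I would split
\[ \mathcal{F}_m\mathcal{Q}_n u-\mathcal{F}u=\mathcal{F}_m(\mathcal{Q}_n u-u)+(\mathcal{F}_m-\mathcal{F})u, \]
estimate the first summand by $M'\Vert\mathcal{Q}_n u-u\Vert$, and control the second by the pointwise convergence $\mathcal{F}_m\to\mathcal{F}$ quantified in (\ref{eq31}); letting $m,n\to\infty$ gives $\mathcal{F}_m\mathcal{Q}_n u\to\mathcal{F}u$.

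Equipped with these two properties and with the solvability of $(I-\lambda\mathcal{F})^{-1}$ guaranteed by the uniqueness of $u^\star$, I would invoke the central perturbation theorem of the theory to conclude that $(I-\lambda\mathcal{F}_m\mathcal{Q}_n)^{-1}$ exists for all $m,n$ beyond a threshold $\mathfrak{T}$ and is uniformly bounded. The step I expect to be the crux is precisely this one: $\mathcal{F}_m\mathcal{Q}_n$ does \emph{not} converge to $\mathcal{F}$ in operator norm, so a direct Neumann-series perturbation of $I-\lambda\mathcal{F}$ is unavailable. What saves the argument is that the product $\Vert(\mathcal{F}-\mathcal{F}_m\mathcal{Q}_n)\mathcal{F}_m\mathcal{Q}_n\Vert\to 0$, which holds because pointwise convergence is automatically uniform on the relatively compact set $\{\mathcal{F}_m\mathcal{Q}_n u:\Vert u\Vert\leqslant 1\}$ produced in the previous paragraph; this forces the denominator in the standard bound
\[ \Vert(I-\lambda\mathcal{F}_m\mathcal{Q}_n)^{-1}\Vert\leqslant\frac{1+|\lambda|\,\Vert(I-\lambda\mathcal{F})^{-1}\Vert\,\Vert\mathcal{F}_m\mathcal{Q}_n\Vert}{1-|\lambda|\,\Vert(I-\lambda\mathcal{F})^{-1}\Vert\,\Vert(\mathcal{F}-\mathcal{F}_m\mathcal{Q}_n)\mathcal{F}_m\mathcal{Q}_n\Vert} \]
to stay bounded away from zero for $m,n>\mathfrak{T}$, yielding the uniform bound.

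Finally I would derive the estimate through a residual identity. The exact solution obeys $(I-\lambda\mathcal{F})u^\star=f$, while the iterated solution obeys $(I-\lambda\mathcal{F}_m\mathcal{Q}_n)\bar{u}_{mn}=f$, which follows from (\ref{equ41}) together with $\mathcal{Q}_n\bar{u}_{mn}=\hat{u}_{mn}$. Subtracting and regrouping gives
\[ (I-\lambda\mathcal{F}_m\mathcal{Q}_n)(\bar{u}_{mn}-u^\star)=\lambda(\mathcal{F}_m\mathcal{Q}_n-\mathcal{F})u^\star. \]
Applying the now-available uniformly bounded inverse and taking $L^{\infty}(\Omega)$-norms then produces the asserted estimate for $m,n\geqslant\mathfrak{T}$.
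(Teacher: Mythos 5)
The paper does not actually prove this statement: Theorem \ref{theo5} is quoted from Atkinson's monograph \cite{8} and used as a black box in the proof of Theorem \ref{theo7}. Your reconstruction is correct and is precisely the argument of the cited source, namely Anselone's collectively compact operator approximation theory: uniform boundedness of $\{\mathcal{Q}_n\}$ by Banach--Steinhaus, collective compactness and pointwise convergence of the composite family $\{\mathcal{F}_m\mathcal{Q}_n\}$, the perturbation theorem hinging on $\Vert(\mathcal{F}-\mathcal{F}_m\mathcal{Q}_n)\mathcal{F}_m\mathcal{Q}_n\Vert\to 0$ (which is indeed the crux, since norm convergence fails), and the residual identity for the iterated solution. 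Two small remarks. First, your identity $(I-\lambda\mathcal{F}_m\mathcal{Q}_n)(\bar{u}_{mn}-u^\star)=\lambda(\mathcal{F}_m\mathcal{Q}_n-\mathcal{F})u^\star$ yields the bound with an extra factor $\vert\lambda\vert$ on the right-hand side; the statement as printed omits it (as does the paper's later use in Theorem \ref{theo7}), so the discrepancy lies in the paper's normalization, not in your derivation. Second, you correctly read $(I-\lambda\mathcal{F}_m\mathcal{Q}_n)\bar{u}_{mn}=f$ out of (\ref{equ41}) and $\mathcal{Q}_n\bar{u}_{mn}=\hat{u}_{mn}$; note that (\ref{equ41}) as written carries a spurious $\mathcal{Q}_n$ in front of $\mathcal{F}_m$ (the intended definition is $\bar{u}_{mn}=f+\lambda\mathcal{F}_m\hat{u}_{mn}$, as the subsequent manipulations in the paper confirm), so your reading is the consistent one.
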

{\color{red}We now prove the convergence of the proposed method.  }
 \begin{theorem}\label{theo7}
 Consider the assumptions   of Theorem \ref{theo5} be fulfilled.  
 Let $u^\star \in \mathcal{N}_{\psi^{\varepsilon, \rho}} (\Omega) \cap C^{2m} (\Omega)$ {\color{red} be the } unique  solution of the WSIE (\ref{eq1}).
 {\color{red} Let us suppose that a quasi-uniform set of nodes in $\Omega$ has been used. }
 Then, there exists $\mathcal{T}> 0$, such that for each $m, n> \mathcal{T}$, {\color{red}the linear system (\ref{eq240}) } has a unique solution $\hat{u}_{mn}$ which converges to $u^\star$ as $m, n\longrightarrow\infty $. Additionally, there exist {\color{red}constants } $\gamma_1, \gamma_2, \hat{C}, C_n, \tilde{h_0}$, such that
 \begin{align*} 
 \Vert \bar{u}_{mn} -u^\star\Vert _{L^{\infty}(\Omega)}\leqslant\gamma_1 \gamma_2 \hat{C} \sqrt{\hat{C}_{\psi^{\varepsilon, \rho}}} h_{{}_{\mathit{X} , \Omega}}^{k}\Vert u^\star \Vert _{\mathcal{N} _{{\psi^{\varepsilon, \rho}}}(\Omega)}+\dfrac{\gamma_1 C_n}{L^{2m}}\sup_{\text{x}\in \Omega}\vert {u^{\star}}^{(2m)}(\text{x})\vert, \quad m, n \geqslant \mathcal{T},
 \end{align*} 
 
 provided that $h_{X,\Omega}\leqslant \tilde{h_0}$.
 \end{theorem}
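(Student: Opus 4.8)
The plan is to reduce everything to the error inequality supplied by Theorem \ref{theo5} and then bound its two factors separately, using the hybrid-kernel interpolation estimate of Corollary \ref{cor1} for one factor and the quadrature estimate (\ref{eq31}) for the other. First I would invoke Theorem \ref{theo5}, whose hypotheses hold by assumption, to obtain simultaneously the existence and \emph{uniform} boundedness of $(I-\lambda\mathcal{F}_m\mathcal{Q}_n)^{-1}$ for $m,n$ beyond some threshold $\mathfrak{T}$, together with the estimate
\[
\Vert \bar{u}_{mn}-u^\star\Vert_{L^{\infty}(\Omega)}\leqslant \Vert (I-\lambda\mathcal{F}_m\mathcal{Q}_n)^{-1}\Vert\,\Vert \mathcal{F}u^\star-\mathcal{F}_m\mathcal{Q}_n u^\star\Vert_{L^{\infty}(\Omega)}.
\]
Setting $\gamma_1:=\sup_{m,n>\mathfrak{T}}\Vert (I-\lambda\mathcal{F}_m\mathcal{Q}_n)^{-1}\Vert<\infty$ controls the first factor by a single constant, and I would take $\mathcal{T}:=\mathfrak{T}$.

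The core step is to split the residual by inserting $\mathcal{F}_m u^\star$:
\[
\mathcal{F}u^\star-\mathcal{F}_m\mathcal{Q}_n u^\star=(\mathcal{F}-\mathcal{F}_m)u^\star+\mathcal{F}_m(u^\star-\mathcal{Q}_n u^\star),
\]
so that the triangle inequality cleanly separates the quadrature error from the interpolation error. For the first summand I would apply (\ref{eq31}) directly, which is legitimate because $u^\star\in C^{2m}(\Omega)$, giving the bound $(C_n/L^{2m})\sup_{x\in\Omega}|{u^\star}^{(2m)}(x)|$. For the second summand I would use that $\{\mathcal{F}_m\}$ is a collectively compact, pointwise convergent family, hence uniformly bounded in operator norm, so there is a constant $\gamma_2$ with $\Vert\mathcal{F}_m\Vert\leqslant\gamma_2$ for all $m$; combined with Corollary \ref{cor1} (applicable since $u^\star\in\mathcal{N}_{\psi^{\varepsilon,\rho}}(\Omega)$ and the nodes are quasi-uniform), this yields $\Vert\mathcal{F}_m(u^\star-\mathcal{Q}_n u^\star)\Vert_{L^{\infty}(\Omega)}\leqslant \gamma_2\hat{C}\sqrt{\hat{C}_{\psi^{\varepsilon,\rho}}}\,h_{{}_{\mathit{X},\Omega}}^{k}\Vert u^\star\Vert_{\mathcal{N}_{\psi^{\varepsilon,\rho}}(\Omega)}$ whenever $h_{{}_{\mathit{X},\Omega}}\leqslant\tilde{h_0}$. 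Multiplying the sum of the two bounds by $\gamma_1$ reproduces exactly the claimed inequality.

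For the existence, uniqueness, and convergence claims I would argue as follows. Invertibility of $(I-\lambda\mathcal{F}_m\mathcal{Q}_n)$ guarantees that $\bar{u}_{mn}$ is well defined, and through the identity $\mathcal{Q}_n\bar{u}_{mn}=\hat{u}_{mn}$ established just before the theorem, the discrete collocation solution $\hat{u}_{mn}$ of the linear system (\ref{eq240}) exists and is unique for $m,n>\mathcal{T}$. Convergence is then read straight off the final inequality: under quasi-uniformity, refining the node set as $n\to\infty$ forces $h_{{}_{\mathit{X},\Omega}}\to 0$ so the first term vanishes, while $m\to\infty$ sends $L^{2m}\to\infty$ and kills the second; hence $\bar{u}_{mn}\to u^\star$, and since $\mathcal{Q}_n u\to u$ for every $u\in C(\Omega)$, the relation $\hat{u}_{mn}=\mathcal{Q}_n\bar{u}_{mn}$ gives $\hat{u}_{mn}\to u^\star$ as well.

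I expect the main obstacle to be the honest justification of the two uniform constants rather than the algebra of the splitting. The finiteness of $\gamma_1$ rests on the collective-compactness machinery underlying Theorem \ref{theo5}, and extracting $\gamma_2=\sup_m\Vert\mathcal{F}_m\Vert$ requires the uniform operator-norm bound that a collectively compact, pointwise convergent family enjoys. Some care is also needed to confirm that the fixed regularity $u^\star\in C^{2m}(\Omega)$ is compatible with the index $m$ of the quadrature rule so that (\ref{eq31}) genuinely applies, and to verify that it is precisely the quasi-uniformity hypothesis that converts the abstract limit $n\to\infty$ into the geometric decay $h_{{}_{\mathit{X},\Omega}}\to 0$.
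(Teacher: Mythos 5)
Your proposal is correct and follows essentially the same route as the paper: invoke Theorem \ref{theo5} after verifying its hypotheses via Corollary \ref{cor1} and (\ref{eq31}), split the residual as $(\mathcal{F}-\mathcal{F}_m)u^\star+\mathcal{F}_m(u^\star-\mathcal{Q}_n u^\star)$, and bound the two pieces with the quadrature estimate and the uniform bound $\Vert\mathcal{F}_m\Vert\leqslant\gamma_2$ obtained from collective compactness and the uniform boundedness principle. The only step you gloss over that the paper makes explicit is the transfer of invertibility from $(I-\lambda\mathcal{F}_m\mathcal{Q}_n)$ to $(I-\lambda\mathcal{Q}_n\mathcal{F}_m)$ via the resolvent identity $(I-\lambda\mathcal{Q}_{n}\mathcal{F}_m)^{-1}=\lambda[I-\mathcal{Q}_{n}(I-\lambda\mathcal{F}_m\mathcal{Q}_{n})^{-1}\mathcal{F}_{m}]$, which is what actually justifies uniqueness of the solution of the linear system (\ref{eq240}).
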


 \begin{proof}
 Clearly $h_{{}_{\mathit{X} , \Omega}} $ vanishes when $n\longrightarrow \infty$ due to the definition of quasi-uniform set.  Also, Corollary  \ref{cor1}  states that  for each $u^\star \in \mathcal{N} _{{\psi^{\varepsilon, \rho}}}(\Omega)$, there exist  positive numbers  $\hat{C}$ and $\tilde{h_0}$   such that 
\begin{equation}\label{eq40}
\Vert u^\star- \mathcal{Q}_{n}u^\star \Vert_{L^{\infty}(\Omega)} \leq \hat{C} \sqrt{\hat{C}_{\psi^{\varepsilon, \rho}}} h_{{}_{\mathit{X} , \Omega}}^{k}\Vert u^\star \Vert _{\mathcal{N} _{{\psi^{\varepsilon, \rho}}}(\Omega)},
\end{equation}
provided $h_{{}_{\mathit{X} , \Omega}} \leqslant \tilde{h_0}$ and thus $\mathcal{Q}_{n}u^\star \longrightarrow u^\star$ when $n\longrightarrow \infty.$ 
According to relation  (\ref{eq31})  for each $u^\star \in C^{(2m)}(\Omega),$
 we obtain
\begin{equation}\label{eq41}
\Vert \mathcal{F} u^\star-\mathcal{F}_m u^\star \Vert_{L^{\infty}(\Omega)} \leq \frac{C_n}{L^{2m}} \sup_{\mathbfit{x} \in \Omega} \vert {u^{\star}}^{(2m)}(\mathbfit{x}) \vert.
\end{equation}
{\color{red}Therefore, } $\mathcal{F}_m u^\star \longrightarrow \mathcal{F} u^\star$
as $m \longrightarrow \infty.$ Consequently, the hypothesis of {\color{red} Theorem} \ref{theo5}  are fulfilled. So, we conclude that the iterated scheme {\color{red} provides } a solution $\bar{u}_{mn}$ and there exists $ \mathfrak{T} > 0$ such that for each $m, n \geqslant \mathfrak{T} , (I-\lambda\mathcal{F}_m\mathcal{Q}_{n})^{-1}$ exists and
  \begin{align*}
  \Vert\bar{u}_{mn} - u^\star\Vert_{L^{\infty}(\Omega)}\leqslant\gamma_1\Vert \mathcal{F}u^\star -\mathcal{F}_m \mathcal{Q}_{n}u^\star\Vert_{L^{\infty}(\Omega)},
  \end{align*}
  where $\Vert  (I-\lambda\mathcal{F}_m\mathcal{Q}_{n})^{-1}\Vert\leqslant\gamma_1$. By considering $\hat{u}_{mn}=\mathcal{Q}_n \bar{u}_{mn}$, we deduce that $\hat{u}_{mn}$ is {\color{red} a numerical solution provided by } the {\color{red} proposed } approach.  {\color{red} Applying }   $\mathcal{Q}_{n}$  {\color{red} to}  both sides of equation (\ref{equ41}) yields
  \begin{align*}\mathcal{Q}_{n}\bar{u}_{mn}=\mathcal{Q}_{n}f+\lambda\mathcal{Q}_{n}\mathcal{F}_m\hat{u}_{mn} \Rightarrow (I-\lambda\mathcal{Q}_{n}\mathcal{F}_m)\hat{u}_{mn}=\mathcal{Q}_{n}f.
  \end{align*}
  In addition, we have \cite{8}
  \begin{align*}
  (I-\lambda\mathcal{Q}_{n}\mathcal{F}_m)^{-1}=\lambda[I-\mathcal{Q}_{n}(I-\lambda\mathcal{F}_m\mathcal{Q}_{n})^{-1} \mathcal{F}_{m}], \quad \mathcal{T}\geqslant \mathfrak{T}.
  \end{align*}
As a result, the existence and boundedness of $(I-\lambda\mathcal{F}_m\mathcal{Q}_{n})^{-1}$ yield forthwith the existence and boundedness of $(I-\lambda\mathcal{Q}_{n}\mathcal{F}_m)^{-1}$. {\color{red}Then, } $\hat{u}_{mn}$ is {\color{red} the} unique solution {\color{red} provided by the proposed } approach.
According to the uniform boundedness principle \cite{8}, and
 the pointwise convergence of $\mathcal{F}_m$, we can suppose that $\Vert \mathcal{F}_m\Vert\leqslant\gamma_2<\infty$, so
  \begin{align*}
  \Vert \bar{u}_{mn} -u^\star\Vert_{L^{\infty}(\Omega)} & \leqslant\gamma_1\bigg[\Vert \mathcal{F}_m(u^\star-\mathcal{Q}_{n}u^\star)\Vert_{L^{\infty}(\Omega)}+\Vert\mathcal{F}u^\star -\mathcal{F}_m u^\star\Vert_{L^{\infty}(\Omega)}\bigg] \nonumber 
  \\
  & \leqslant\gamma_1 \gamma_2\Vert (u^\star-\mathcal{Q}_{n}u^\star)\Vert_{L^{\infty}(\Omega)}+\gamma_1\Vert\mathcal{F}u^\star -\mathcal{F}_m u^\star\Vert_{L^{\infty}(\Omega)}.
  \end{align*}
  Now, using (\ref{eq40}) and (\ref{eq41}), {\color{red} the claim follows. }
  \qed
 \end{proof}

\section{Enhanced Particle Swarm Optimization for optimal parameters in Hybrid Kernels }\label{sec8}
In this part, we introduce an innovative approach for determining the optimal parameters within the HRKs technique.

As discussed earlier, the hybrid kernels used in this study primarily involve combining an infinitely smooth kernel with one of finite smoothness. Infinitely smooth kernels often contain a shape parameter. Moreover, the creation of hybrid kernels also includes the incorporation of a weight parameter.  Hence, the hybrid kernels incorporated in this research consist of two distinct parameters, namely the shape parameter and the weight parameter. Proper selection of these parameters holds utmost importance as they heavily influence the accuracy and stability   of the approximations derived from kernel-based techniques.

There are numerous methods to optimize these parameters. Typically,  optimization methods can be classified into two primary categories: deterministic and stochastic methods. In addition, deterministic approaches can be categorized as computational approaches and gradient-based approaches \cite{Mirjalili, Khodadadi}. Computational techniques do not necessitate the computation of gradient functions, but are ineffective and slow. Gradient-based techniques utilize the derivative  of the cost function,  but they do not ensure reaching the global optimum when the cost function lacks smoothness. It can be challenging, or  even unattainable, to solve  multimodal, multivariate, or non-differentiable problems using deterministic approaches. Over the last several years, scientists have made notable progress in designing numerous metaheuristic algorithms inspired by nature. These methods are specifically designed to solve complex optimization problems that cannot be efficiently resolved using traditional methods within a satisfactory timeframe or with a high degree of accuracy. Metaheuristic algorithms are capable of finding the global optimum without relying on gradient information. They achieve this by iteratively invoking the cost function. These approaches restrict the scope of the search and aim to efficiently discover solutions \cite{ Khalilpourazari, Faris}. 
A study conducted by references \cite{Khodadadi, Deb} has presented a comprehensive analysis of nature-inspired metaheuristic algorithms and gradient-based algorithms, examining their respective strengths and weaknesses.

 Here, we introduce a  particle swarm optimization (PSO) approach to determine the optimal parameter values for the hybrid kernels.
   Prior to that, let us present the cost function.
\subsection{Cost Function}
In the current study, the selected cost function for optimizing parameters is derived from  the  maximum absolute  error (MAE).
The calculation of the  MAE over $M$ evaluation points is performed using the following relation:
\begin{align*}
\Vert e \Vert_{\infty}:= \max_{1 \leqslant j \leqslant M} \bigl \{ \vert u(\zeta_j)-\tilde{u}_{mn}(\zeta_j) \vert  \bigr \},
\end{align*}
where $\zeta_j$ denote the evaluation points and $\Vert e \Vert_{\infty}$ indicates  to the error function that needs  to be optimized for the minimum values for a set of  $\rho$ and  $\varepsilon$. Additionally,  $u$  represents the exact solution of the problem, while $\tilde{u}_{mn}$ signifies the solution obtained through the method outlined in the paper.

Therefore,  we can formulate the optimization problem as belows:
\begin{center}
Minimize  $O_f(\varepsilon, \rho)$ \\
subject to $\varepsilon, \rho \geq 0,$\\
\end{center}
in which   
\[    O_f(\varepsilon, \rho):= \Vert e \Vert_{\infty}(\varepsilon, \rho)= \max_{1 \leqslant j \leqslant M} \bigl \{ \vert u(\zeta_j)-\tilde{u}_{mn}(\zeta_j)(\varepsilon, \rho) \vert  \bigr \} \]
 denotes the cost  function   and $\varepsilon$  and  $\rho$ are the kernel parameters. 
 {\color{red}\begin{remark} 
Since the aim of the present paper is the comparison of the results of HRKs method for different WSFIEs, problems with existing exact solutions are considered. It must be emphasized that for practical problems where the exact solution is not known, it is not possible to calculate the exact MAE norm. In such situations, leave-one-out cross-validation (LOOCV) \cite{Rippa} is a prominent technique that can be used in conjunction with the PSO algorithm to find the optimal values of parameters.
\end{remark} }

Now, we are prepared to introduce an effective technique for selecting parameters of the HRKs using the PSO algorithm.
\subsection{Optimizing Parameters via PSO}

PSO is an efficient optimization method utilizing swarm intelligence to address various optimization problems. 
In 1995, Kennedy and Eberhart \cite{Kennedy} introduced the concept of a swarm, which is composed of particles that navigate through the search space of a given problem.
Every particle has a fitness value representing a potential solution. Particles remember their best positions (Pbest) and are influenced by the best value among all particles (Gbest). The PSO algorithm focuses on using these best positions to guide the swarm. In this study, PSO is employed to calculate the weight parameter and shape parameter for the hybrid kernels.
  

 Let $N_P$ be the number of the particles and  $\xi_j=[\varepsilon_j, \rho_j]$ be the candidate solution of particle $j$ $ (j = 1, 2, \ldots, N_P)$. In mathematical terms,  the velocity and the position of the  $j$-th particle  are calculated as
\begin{align*}
 \begin{cases}
v_j^{t+1}=wv_j^{t} +c_1 r_1({\color{red}{\operatorname*{Pbest}}_j } -\xi_j^{t})+c_2 r_2(\operatorname*{Gbest} -\xi_j^{t}), \\
\xi_j^{t+1}=\xi_j^{t}+ v_j^{t+1}, 
 \end{cases}   
\end{align*}
where $v_j^{t}$ is  the velocity vector of particle $j$ at iteration $t$, $\xi_j^{t}$ signifies  the  position vector of particle $j$ at iteration $t$,   $c_1$  and $c_2$   are the learning factors, $r_1$ and $r_2$  are uniform random numbers in the domain $(0,1)$,  $w$ indicates  the inertia weight factor, {\color{red}${\operatorname*{Pbest}}_j$ } denotes the Pbest of agent $j$ at iteration $t$,  and {\color{red} $\operatorname*{Gbest}$} represents the best solution  obtained by all particles so far. 

PSO offers several significant benefits, including easy implementation, having memory, mutual cooperation between particles and information sharing,  low complexity and  rapid convergence.    Nevertheless, the traditional PSO approach tends to prematurely converge and is prone to becoming ensnared in local optimal solutions.  In order to boost the effectiveness of the PSO algorithm, we have considered three significant modifications. These modifications can be summarized as follows:
\begin{itemize}
\item[$\bullet$]  \textbf{Initial population.}
In any evolutionary algorithms, population initialization  is  the first crucial task since it can significantly   increase the convergence speed and the quality of the ultimate solution to difficult problems. Overall,  the most frequent  approach for initializing the population  is to randomly generate initial swarm. In the literature on metaheuristics,  it can be perceived that a chaotic sequence rather than a randomly generated sequence for initialization is an effective method to improve the algorithm’s performance by avoiding premature convergence  \cite{Tian}. In this study, we employ  a chaotic PSO  (CPSO) algorithm in place of a random initial population position, serving as an innovative approach to forming the initial population.  In this approach, we use
the logistic map to produce chaotic variables and subsequently apply these variables within our search process.  The logistic equation can be given as follows:
\begin{equation}\label{s64}
z_{j+1}:=\mu z_{j}(1-z_{j}), \quad  j=1,2, \ldots
\end{equation}
where $z_j$  are dispersed in the range $(0,1)$, such that the initial $z_0 \in (0,1)$
and $z_0 \notin \{0, 0.25, 0.5, 0.75, 1\}$. $\mu$ is a fixed value known as the bifurcation coefficient. As the parameter $\mu$ gradually rises from zero, the system described by equation (\ref{s64}) transitions from a one fixed point to 2, then 3, and continues up to $2^j$. In this strategy, an increasing number of  multiple periodic components will be distributed across  smaller and smaller intervals of $\mu$ as it rises. This phenomenon is clearly  {\color{red} free } from restriction. However,  $\mu$ has a
limit value $\mu_c=3.569945671$ . 
Also, as $\mu \longrightarrow \mu_c$ , the period can turn infinite or even become non-periodic.
 In this case, the entire system converts into a chaotic state. Also,  the entire system becomes unstable when  $\mu$  is bigger than 4.
Therefore the interval $[\mu_c,4]$ is deemed as the chaotic domain of the entire system. 
In the following, we introduce  the  CPSO  algorithm for selecting  the optimal parameter values of the hybrid kernels. First, the logistic map is used to produce the chaotic variable. Therefore, equation  (\ref{s64}) is rewritten as follows:
\begin{equation}\label{s65}
 [ \varepsilon^{'}_{j+1} ,\rho^{'}_{j+1}] =\mu [\varepsilon^{'}_{j}(1-\varepsilon^{'}_{j}), \rho^{'}_{j}(1-\rho^{'}_{j}) ],
\end{equation}
where  $[\varepsilon^{'}_{j},\rho^{'}_{j}]$  is the chaotic variables vector at iteration $j$ and $\mu$ is  bifurcation coefficient.
 Now, set $j=0$ and produce two chaotic variables by equations  (\ref{s65}). 
Subsequently, set $j=1,2, \ldots, j$  in succession to produce the initial chaotic variables. 
 Lastly, incorporate these acquired variables into the problem's search domain as follows:
\begin{align*}
 \begin{cases}
 \varepsilon_{j}=\varepsilon_{\min}+\varepsilon^{'}_{j}(\varepsilon_{\max}-\varepsilon_{\min})  & {} \\
\rho_{j}=\rho_{\min}+\rho^{'}_{j}(\rho_{\max}-\rho_{\min}).    & {}
 \end{cases}   
\end{align*}
 As a result, 
\begin{align*}
\xi_j=[\varepsilon_j, \rho_j], \quad  j=1,2,  \ldots, N_P.
\end{align*}
That {\color{red} is how swarm} initialization based on the logistic map can be achieved.

\item[$\bullet$] \textbf{Inertia weight.}
 The role of inertia weight in the PSO methodology is  crucial. $w$  regulates the influence of a particle's prior velocity on its present velocity. 
 A suitable value for the inertia weight  $w$ typically establishes an equilibrium between local and global exploration abilities, thereby yielding a better optimal solution.
 In this work, the sine  map \cite{Chen} is applied to tune  inertia weights $w$ of the PSO algorithm throughout  the search procedure. The sine map utilized in the PSO approach not only can effectively improve population diversity during the search procedure but also can move to the global optimal solution more quickly.
 Therefore, the inertia weight $w$ can be defined as
 \begin{align*}
 w=x_t=\frac{ \varrho}{4} \times \sin(\pi x_{t-1}), \quad x_t \in (0,1), \quad 0< \varrho \leqslant 4, \quad t=1,2, \ldots, T, 
 \end{align*}
 in which $t$  signifies the current iterations and  $T$ indicates  the maximum iteration.

\item[$\bullet$] \textbf{Learning coefficients.}
 Within the PSO technique,  the learning coefficients  $c_1$ and $c_2$ are associated with the cognitive and social components respectively. These coefficients are known as stochastic acceleration factors, which play a crucial role in adjusting the particle velocity based on the Gbest and Pbest.
 Authors of \cite{Chen}  presented sine cosine acceleration coefficients (SCAC) as a novel method for adjusting parameters relating to cognitive and social components. In this strategy, the value of $c_1$ is between $ 2.5$ and $0.5$ and the value  of $c_2$ is between $0.5$ and $ 2.5$.  
This modification can be expressed as:
 \begin{align*}
 \begin{cases}
 c_1=2+0.5 \sin \Bigl( \Bigl( 1-\frac{t}{T}   \Bigr) \times \frac{\pi}{2} \Bigr),   & {} \\
c_2=2+0.5 \cos \Bigl( \Bigl( 1-\frac{t}{T}   \Bigr) \times \frac{\pi}{2} \Bigr),      & {}
 \end{cases}  \quad t=1,2, \ldots, T,
\end{align*}
in which $t$  signifies the current iteration and  $T$ indicates  the maximum iteration.

\end{itemize}

Figure \ref{a2000} illustrates the PSO method  utilized in this work.  {\color{red} In this figure, the stopping  criterion is set to the “no significant improvement observed for 20 iterations (i.e., similar fitness values achieved for 20 consecutive iterations)”. }

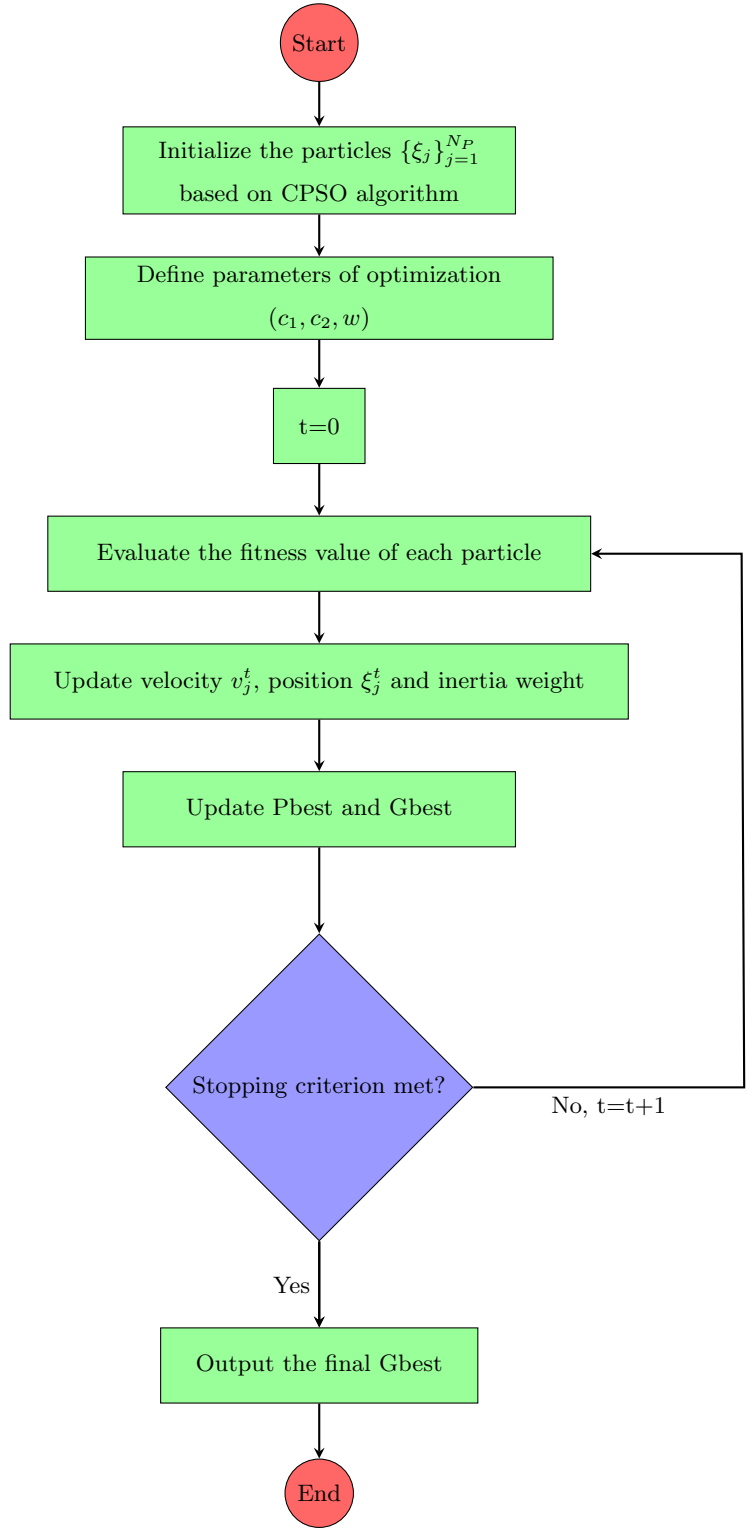
\begin{figure}[!htb]
\begin{center}
\begin{small}

\tikzstyle{startstop} = [circle,draw, draw=black, fill=red!60]

\tikzstyle{io} = [rectangle, 
minimum width=3cm, 
minimum height=1cm, 
text centered, 
text width=5cm, 
draw=black, 
fill=green!40]

\tikzstyle{process} = [rectangle, 
minimum width=3cm, 
minimum height=1cm, 
text centered, 
text width=6cm, 
draw=black, 
fill=green!40]

\tikzstyle{process23} = [rectangle, 
minimum width=1cm, 
minimum height=1cm, 
text centered, 
text width=1cm, 
draw=black, 
fill=green!40]

\tikzstyle{process2} = [rectangle, 
minimum width=3cm, 
minimum height=1cm, 
text centered, 
text width=7cm, 
draw=black, 
fill=green!40]

\tikzstyle{process3} = [rectangle, 
minimum width=3cm, 
minimum height=1cm, 
text centered, 
text width=8cm, 
draw=black, 
fill=green!40]

\tikzstyle{process4} = [rectangle, 
minimum width=3cm, 
minimum height=1cm, 
text centered, 
text width=5cm, 
draw=black, 
fill=green!40]

\tikzstyle{decision} = [diamond, 
minimum width=2cm, 
minimum height=2cm, 
text centered, 
draw=black, 
fill=blue!40]

\tikzstyle{process6} = [rectangle, 
minimum width=3cm, 
minimum height=1cm, 
text centered, 
text width=4cm, 
draw=black, 
fill=green!40]

\tikzstyle{arrow} = [thick,->,>=stealth]

\begin{tikzpicture}[node distance=1.7cm]

\node (start) [startstop] {Start};
\node (in1) [io, below of=start] {Initialize the particles $\{\xi_j\}_{j=1}^{N_P} $ based on  CPSO  algorithm};

\node (pro1) [process, below of=in1] {Define parameters of optimization   \\
  $(c_1, c_2, w)$    };

    \node (pro15) [process23, below of=pro1] {t=0  };
  
  \node (pro11) [process2, below of=pro15] {Evaluate the fitness value of each particle   };

    \node (pro21) [process3, below of=pro11] {Update velocity  $v_j^{t}$, position $\xi_j^{t}$ and inertia weight };

    \node (pro31) [process4, below of=pro21] {Update Pbest and Gbest};
  
\node (dec1) [decision, below of=pro31, yshift=-2cm] {Stopping  criterion  met?};

\node (pro2a) [process6, below of=dec1, yshift=-2cm] {Output the final Gbest };

\node (stop) [startstop, below of=pro2a] {End};

\draw [arrow] (start) -- (in1);

\draw [arrow] (in1) -- (pro1);
\draw [arrow] (pro1) -- (pro15);
\draw [arrow] (pro15) -- (pro11);
\draw [arrow] (pro11) -- (pro21);
\draw [arrow] (pro21) -- (pro31);
\draw [arrow] (pro31) -- (dec1);
\draw [arrow] (dec1) -- (pro2a);
\draw [arrow] (pro2a) -- (stop);
\draw [arrow] (dec1) -- node[anchor=east] {Yes} (pro2a);
\draw [arrow] (dec1) -- node[anchor=north] {No, t=t+1} ([xshift=3.6cm] dec1.east) -- ([xshift=2cm] pro11.east) -- (pro11);
\end{tikzpicture}
\end{small}
\end{center}
\vspace*{-0.3cm} \caption{ \small Flowchart for the PSO algorithm used in this study.}
 \label{a2000}
\end{figure}


\section{Numerical Examples and Performance Evaluation}\label{sec7}

To assess  the  accuracy and validity  of the HRK scheme introduced in this work, we have solved three examples involving  WSIEs.
In this study, we utilized the following combinations of radial kernels:
\begin{align*}
\psi_j^{\varepsilon,\rho}(r) &= e^{-(\varepsilon r)^2}+\rho r^3 
\quad (GA+CU), \\
\psi_j^{\varepsilon,\rho}(r)&= e^{-(\varepsilon r)^2}+\rho r^2 \log(r) \quad (GA+TPS), \\ 
\psi_j^{\varepsilon,\rho}(r) &= \sqrt{ \varepsilon^2  r^2+1}+\rho r^3 \quad (MQ+CU), 
\\ 
\psi_j^{\varepsilon,\rho}(r) &= \sqrt{ \varepsilon^2  r^2+1}+\rho r^2 \log(r)  \quad (MQ+TPS),
\\
\psi_j^{\varepsilon,\rho}(r) &= \frac{1}{\sqrt{ \varepsilon^2  r^2+1}}+\rho r^3 \quad (IMQ+CU), 
\\ 
\psi_j^{\varepsilon,\rho}(r) &= \frac{1}{\sqrt{ \varepsilon^2  r^2+1}}+\rho r^2 \log(r)  \quad (IMQ+TPS). 
\end{align*}
{\color{red} Also, we have optimized the parameters using PSO approach, which is discussed in Section \ref{sec8}.
In computations,  we  set the maximum number of iterations at $T=100$ and the population size at $N_P=20$. }
   Furthermore, the initial parameters are chosen as follows:  $w_{\min}=0.4,$ $w_{\max}=0.9,$ $c_1=1.2 $ and  $c_2=1.7.$ Also, we employ  $10$-point CGL  integration formula with $15$ non-uniform subdivisions (L=15) for the estimation of integrals in the method. 
 {\color{red}   A uniform distribution  in the domain $\Omega$ has been used as evaluation points for calculating the Mean Absolute Error (MAE) and the Root Mean Square Error (RMSE). The MAE measures the average magnitude of absolute differences between the approximated and exact values, whereas the RMSE quantifies the square root of the mean of squared deviations between these two sets of values. For one-dimensional problems, we put $M=200$. For multi-dimensional problems, the number $M$ depends on the irregular domain $\Omega$. In these cases, we firstly established a uniform grid of $40^d$ points, and then  removed  all the points located outside the domain $\Omega$. All routines are written in “Maple 18” software and run on a laptop with an Intel(R) Core(TM) i5-4210U CPU 2.40 GHz
processor and 8GB RAM.
 }

\begin{example} \label{y1}
As the first example let \cite{A17}
\begin{align*}
u(x)-  \int_{0}^{1} \ln \vert x-t \vert u(t) dt=f(x), \quad x \in [0, 1],
\end{align*}
where 
\[  f(x)=( x^3-x ) \Big ( \ln(\frac{1-x}{x}) \Big )+4x^2+\frac{x}{2}-\frac{5}{3},   \]
with the true solution  $u(x)=3x^2-1.$ 
For different $n$ values, the MAE of the suggested hybrid
approach and the standard approach given by \cite{A17} are provided in Table \ref{samplehG}. 
 In \cite{A17}, a median value for $\varepsilon$ was chosen by the authors after a trial-and-error procedure. 
 In addition, Figure \ref{ssin1} illustrates the variations of the condition number for  various choices  of $n$ with different kernels.
Also, Figure \ref{a53} displays the related    RMSE convergence.
In Figures  \ref{ssin1} and \ref{a53}, we utilized $\rho=10^{-8}$ and $\varepsilon=0.2$. 
Figure \ref{ssin1} illustrates that the condition number of the coefficient matrix for the hybrid kernels is lower than that of both the pure MQ and pure GA kernels. Nonetheless, the TPS and CU kernels exhibit the lowest condition number values.
Also, findings in Figure \ref{a53} and Table \ref{samplehG}  demonstrate that  the GA+CU hybrid kernels yield greater accuracy compared to the MQ+TPS.
Furthermore, we find that the hybrid kernels outperform the pure kernels in terms of accuracy.  {\color{red}  From Table \ref{samplehG}, we can see that the elapsed CPU times of the proposed method are lower than the MPI method in \cite{A17}. }
Also, for pure GA kernel and GA+CU hybrid kernel, the root mean square errors and the condition numbers for various choices  of $\rho$  and shape parameter $\varepsilon$ ranging from $0.001$ to $5$ with $n = 10$ on a log-log  coordinate are plotted in Figures \ref{ssin3} and \ref{ssin4},  respectively. 
We see that as the value of $\rho$ increases,  the CU term overcomes  the GA term. As a result, the rate of convergence diminishes, but the stability of the system improves.
 Conversely, as the parameter $\rho$ decreases, the influence of the GA term becomes more significant than that of the CU term, leading to enhanced accuracy of the method while this comes with a rise in the condition number of the system matrix.
Moreover, our observations indicate that the minimum error is typically achieved when the weight parameter $\rho$ is small.
Additionally, Figure \ref{ssin3} illustrates that the pure GA kernel ($\rho=0$) leads to a loss in accuracy for shape parameter $\varepsilon \lesssim 1.2$, while GA+CU hybrid kernel provides  good approximations across all values of the shape parameters. 
In this scheme, the parameter $\rho$ must be
\begin{itemize}
\item[$\bullet$]  adequately small to give faster convergence.
\item[$\bullet$]  sufficiently large to enhance conditioning.
\end{itemize}

 \begin{table}[!htb] \centering  
\caption{\small Comparison of standard method and  hybrid method  in Example    \ref{y1}.} \vspace*{0.1cm} \scalebox{1.1}{ 
\resizebox{\linewidth}{!}{%
\begin{tabular}{lccccccccccccccccc} \cmidrule[1pt](lr){1-15} 
n  &     & \multicolumn{3}{c}{Standard bases in \cite{A17}} &  &&&&&     \multicolumn{3}{c}{Hybrid bases}  \\  
\cmidrule(lr){2-7} \cmidrule(lr){8-15} 
  &   &  GA   &  & &  MQ   &       &  &   &  GA+CU &    &   &  &  MQ+TPS  &    &  &  &   \\  
\cmidrule(lr){2-4} \cmidrule(lr){5-7}  \cmidrule(lr){8-11} \cmidrule(lr){12-15}
  & $\varepsilon=0.15 \times \sqrt{n}$ & $\Vert e_n \Vert_{\infty} $  &{\color{red} Time (s)} & $\varepsilon=\frac{4}{\sqrt{n}}$   &          $\Vert e_n \Vert_{\infty} $ & {\color{red} Time (s)}   &     $\varepsilon_{opt}$ & $\rho_{opt}$   &   $\Vert e_n \Vert_{\infty} $    &{\color{red} Time (s)}   & $\varepsilon_{opt}$  &   $\rho_{opt}$   &  $\Vert e_n \Vert_{\infty} $  & {\color{red} Time (s) } \\ \cmidrule(lr){1-15} 
 4  & {$0.30$}  &   {$4.24 \times 10^{-3}$}  &{\color{red} $0.94 $}    & {$2.00$}  &  {$3.86 \times 10^{-3}$} & {\color{red} $0.98 $}  &   {$0.05 $} & {$5.70 \times 10^{-8}$} & {$4.69 \times 10^{-5}$} &  {\color{red} $0.36 $} & {$1.63$} &  {$6.34 \times 10^{-8}$}   &   {$6.34 \times 10^{-4}$} &  {\color{red} $0.67 $} 
    \\  \cmidrule(lr){1-15} 
 6  & {$0.36$}  &  {$1.32 \times 10^{-4}$} & {\color{red} $4.21 $}  & {$1.63$}  &  {$3.46 \times 10^{-4}$} & {\color{red} $4.32 $} &   {$0.24 $} & {$2.28 \times 10^{-8}$} & {$4.56 \times 10^{-6}$}  &   {\color{red} $2.01 $}   & {$1.12 $}  & {$7.98 \times 10^{-7}$}  &  {$1.89 \times 10^{-5}$} &  {\color{red} $3.78 $} 
    \\   \cmidrule(lr){1-15} 
 8  & {$0.42$}     &  {$6.65 \times 10^{-6}$}  &  {\color{red} $20.67 $} & {$1.41$}  &  {$4.12 \times 10^{-5}$} & {\color{red} $21.03 $} &   {$0.33 $} & {$3.67 \times 10^{-9}$} & {$4.29 \times 10^{-7}$} &    {\color{red} $4.44 $}    &  {$0.91 $} & {$3.67 \times 10^{-8}$}     &  {$7.90 \times 10^{-7}$}  &  {\color{red} $8.29 $} 
    \\    \cmidrule(lr){1-15} 
 10  & {$0.47$}  &  {$4.57 \times 10^{-7}$} & {\color{red} $51.49 $} & {$1.26$}  &  {$7.30 \times 10^{-6}$} & {\color{red} $51.73 $} &   {$0.43 $} & {$3.96 \times 10^{-10}$} & {$3.34 \times 10^{-8}$} &    {\color{red} $7.51 $}    & {$0.79$}  &   {$4.06 \times 10^{-9}$}    &  {$1.20 \times 10^{-7}$} &  {\color{red} $14.45 $} 
    \\  
    \cmidrule[1pt](lr){1-15} 
\end{tabular}}   }
\label{samplehG}
\end{table}

\begin{figure}[!htb] 
\begin{center}
\includegraphics[width=0.9 \textwidth]{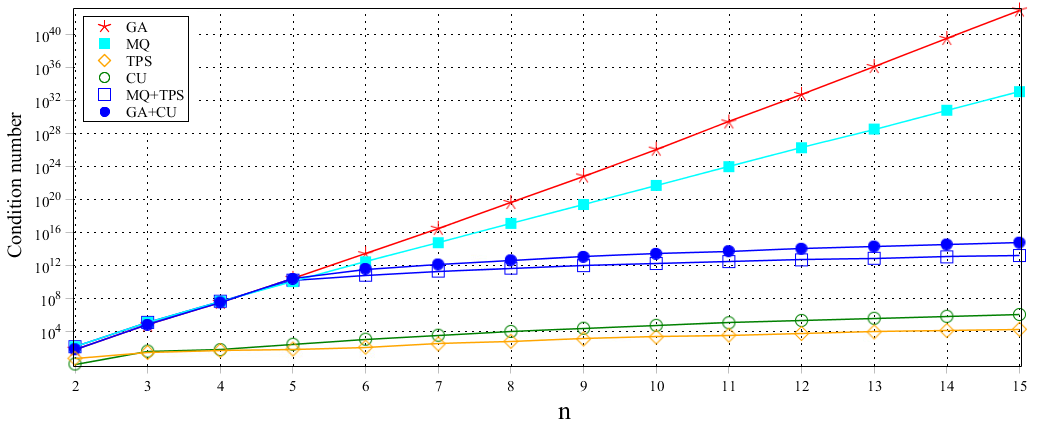}
\end{center}
\vspace*{-0.3cm} \caption{ \small Condition number in Example \ref{y1} for different kernels.}
\label{ssin1}
\end{figure}

\begin{figure}[!htb] 
\begin{center}
\includegraphics[width=0.9 \textwidth]{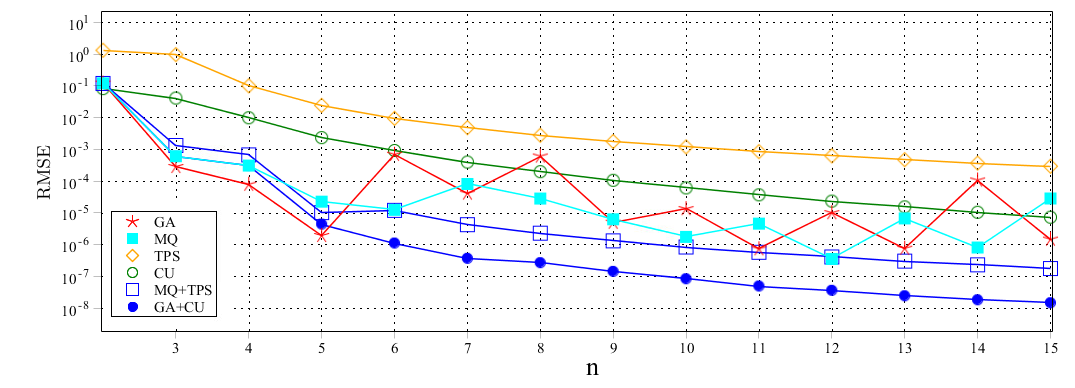}
\end{center}
\vspace*{-0.3cm} \caption{ \small RMSE in Example \ref{y1} for various kernels.}
\label{a53}
\end{figure}

\begin{figure}[!htb] 
\begin{center}
\includegraphics[width=1.05\textwidth]{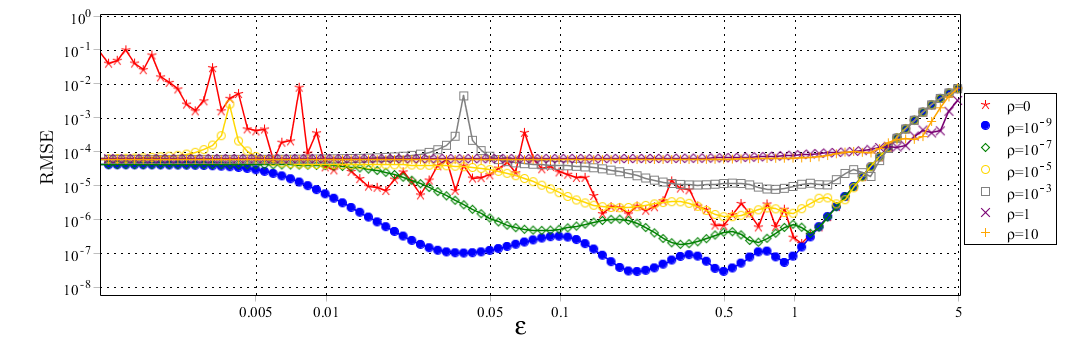}
\end{center}
\vspace*{-0.3cm} \caption{ \small RMSE convergence in Example \ref{y1} for various choices of $\rho$.}
\label{ssin3}
\end{figure}

\begin{figure}[!htb] 
\begin{center}
\includegraphics[width=0.9 \textwidth]{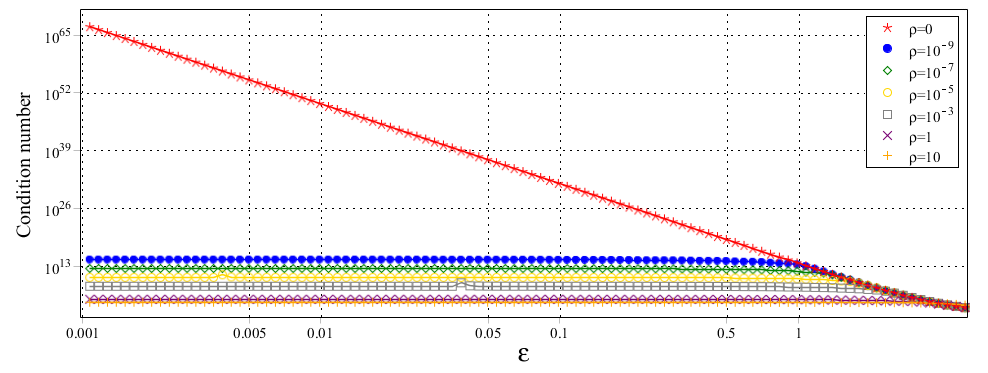}
\end{center}
\vspace*{-0.3cm} \caption{ \small Condition number in Example \ref{y1} for various choices of  $\rho$.}
\label{ssin4}
\end{figure}

\end{example}

\newpage
\vspace{5cm}
\newpage
\vspace{5cm}

\newpage
\vspace{5cm}
\newpage
\hspace*{5cm}
\newpage
\vspace{5cm}
\newpage
\hspace*{5cm}

\begin{example} \label{y9}
We consider the  WSFIE \cite{A17}
\begin{align*}
u(x)-  \int_{0}^{\pi} \ln \vert \cos x-\cos t \vert u(t) dt=1, \quad x \in [0, \pi],
\end{align*}
with true solution $u(x)=\frac{1}{\pi \ln(2)+1} $ \cite{8}.  The kernel $K(x, t) = \ln \vert \cos x-\cos t \vert $ can be rewritten as \cite{8}
\begin{align*}
k(x,t)  
& = \ln \Bigl \vert 2 \sin \frac{1}{2}(x-t) \sin \frac{1}{2}(x+t)  \Bigr \vert \\
& = \ln \Bigl \{ \frac{ 2 \sin \frac{1}{2}(x-t) \sin \frac{1}{2}(x+t)  }{(x-t)(x+t)(2 \pi -x-t)} \Bigr \}  + \ln \vert x-t \vert +\ln (x+t)  + \ln (2 \pi -x-t) \\
& = \sum\limits_{i=1}^{4} R_i (x   ,   t) S_i(x   ,  t), 
\end{align*}
with $R_1 = S_2 = S_3 = S_4 = 1$ and 
\begin{align*}
{} & S_1 (x,t)=\ln \Bigl \{ \frac{2 \sin \frac{1}{2}(x-t) \sin \frac{1}{2}(x+t)  }{(x-t)(x+t)(2 \pi -x-t)} \Bigr \}, \\
 & R_2 (x,t)=\ln \vert x-t \vert,  \\
 &  R_3 (x,t)=\ln (x+t), \\
&  R_4 (x,t)=\ln (2 \pi -x-t). 
\end{align*}
The function $S_1$ is  infinitely differentiable over the interval $[0, 2 \pi ]$, while the functions  $R_2, R_3,$ and $R_4$  are classified as singular functions \cite{8}.  
{\color{red} Here, we firstly converted the interval $[0, \pi]$ to $[0,1]$ via the change of variable $y=\frac{\pi-t}{\pi}$  and then implemented the method described in Subsection \ref{752}.}
The MAE of the proposed hybrid scheme and the standard scheme referenced in \cite{A17} is proposed in Table \ref{samplehG2} for various n values.
 In \cite{A17}, authors considered $\varepsilon=0.15 \times \sqrt{n}$ for GA kernel and $\varepsilon=\frac{4}{\sqrt{n}}$ for IMQ kernel.
Table \ref{samplehG2} illustrates that the error estimates derived from hybrid kernels are  satisfactory and significantly outperform those obtained from pure kernels.
Also, the absolute errors for $n=12$  utilizing pure and hybrid kernels are displayed in Figure   \ref{ssin8}.
 Additionally, Figure \ref{ssin9} illustrates the condition numbers for both the pure and hybrid kernels with  $n=10$. In Figures \ref{ssin8} and \ref{ssin9}, we employed the values $\rho=10^{-10}$ and $\varepsilon=0.2$. 
 The  findings presented here show that using hybrid kernels is appropriate for stable computations in the limit of $\varepsilon \longrightarrow 0.$

 \begin{table}[!htb] \centering  
\caption{\small Comparison of standard method and hybrid  method  in Example    \ref{y9}.} \vspace*{0.1cm} \scalebox{1.08}{ 
\resizebox{\linewidth}{!}{%
\begin{tabular}{lcccccccccccccccccc}   \cmidrule[1pt](lr){1-14}  
n  &     & \multicolumn{3}{l}{Standard bases in \cite{591}} &  &     &&&\multicolumn{3}{c}{Hybrid bases}  \\  
\cmidrule(lr){2-6} \cmidrule(lr){7-14} 
  & GA  &     &    IMQ   &     &  &   &  MQ+CU &    &  &   &  GA+TPS  &    &  &  &   \\  
\cmidrule(lr){2-3} \cmidrule(lr){4-6}  \cmidrule(lr){7-10} \cmidrule(lr){11-14}
  & $\varepsilon=0.1 \times \sqrt{n}$ & $\Vert e_n \Vert_{\infty} $ &  $\varepsilon=1.1+\frac{1}{\sqrt{n}}$   &          $\Vert e_n \Vert_{\infty} $ &  &      $\varepsilon_{opt}$ & $\rho_{opt}$   &   $\Vert e_n \Vert_{\infty} $   &   {\color{red} Time (s)} & $\varepsilon_{opt}$  &   $\rho_{opt}$   &  $\Vert e_n \Vert_{\infty} $ &  {\color{red} Time (s) } \\ \cmidrule(lr){1-14} 
4  & {$0.30$}  &  {$5.92 \times 10^{-4}$}  & {$2.00$}  &  {$4.78 \times 10^{-3}$} &   &   {$1.47 $} & {$9.39 \times 10^{-8}$} & {$4.69 \times 10^{-5}$} &  {\color{red} $0.53 $} & {$0.11$} &  {$2.69 \times 10^{-9}$}   &   {$6.34 \times 10^{-5}$} & {\color{red} $0.73 $}
    \\  \cmidrule(lr){1-14} 
 6  & {$0.36$}  &  {$1.89 \times 10^{-5}$}  & {$1.63$}  &  {$2.22 \times 10^{-4}$} &  &   {$1.28 $} & {$5.81 \times 10^{-9}$} & {$1.19 \times 10^{-7}$} & {\color{red} $2.27 $}   & {$0.19 $}  & {$7.98 \times 10^{-8}$}  &  {$8.59 \times 10^{-7}$}  & {\color{red} $4.41 $}
    \\   \cmidrule(lr){1-14} 
8  & {$0.42$}  &  {$2.89 \times 10^{-7}$}  & {$1.41$}  &  {$1.50 \times 10^{-5}$} &  &   {$0.98$} & {$7.53 \times 10^{-9}$} & {$1.25 \times 10^{-8}$} & {\color{red} $4.64 $} & {$0.25 $} & {$3.67 \times 10^{-8}$}     &  {$7.40 \times 10^{-8}$} & {\color{red} $8.39 $}
    \\    \cmidrule(lr){1-14} 
 10  & {$0.47$}  &  {$7.83 \times 10^{-8}$}  & {$1.26$}  &  {$8.03 \times 10^{-7}$} &  &   {$0.84 $} & {$4.78 \times 10^{-10}$} & {$6.44 \times 10^{-10}$} & {\color{red} $7.18 $} &{$0.36$}  &   {$9.47 \times 10^{-10}$}    &  {$1.23 \times 10^{-9}$} & {\color{red} $15.72 $}
    \\  
  \cmidrule[1pt](lr){1-14} 
\end{tabular}}   }
\label{samplehG2}
\end{table}

 \begin{figure}[!htb] 
\begin{center}
\includegraphics[width=0.8 \textwidth]{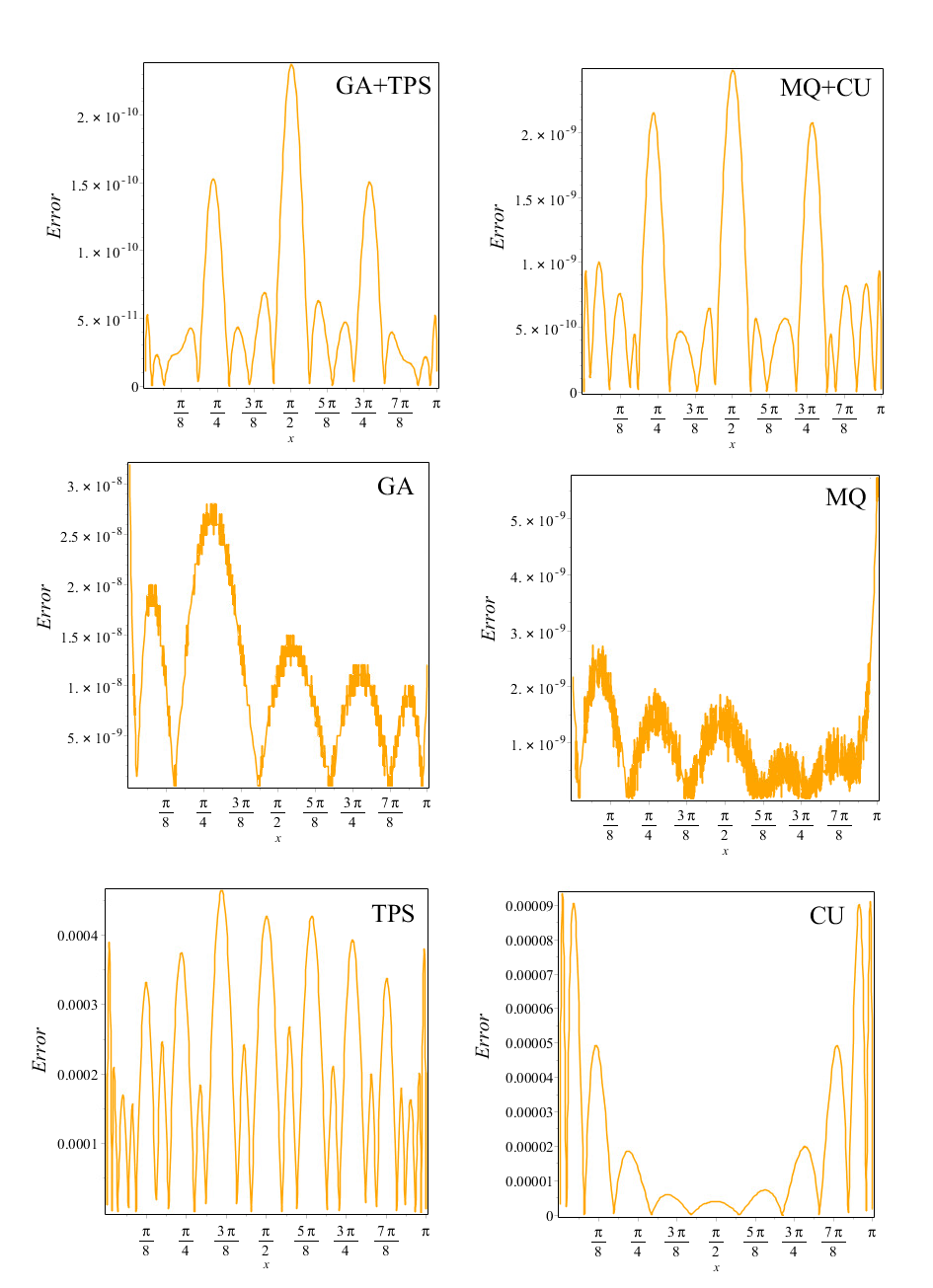}
\end{center}
\vspace*{-0.3cm} \caption{ \small Absolute error of Example \ref{y9} with     $n=12$.}
\label{ssin8}
\end{figure}

 \begin{figure}[!htb] 
\begin{center}
\includegraphics[width=0.9 \textwidth]{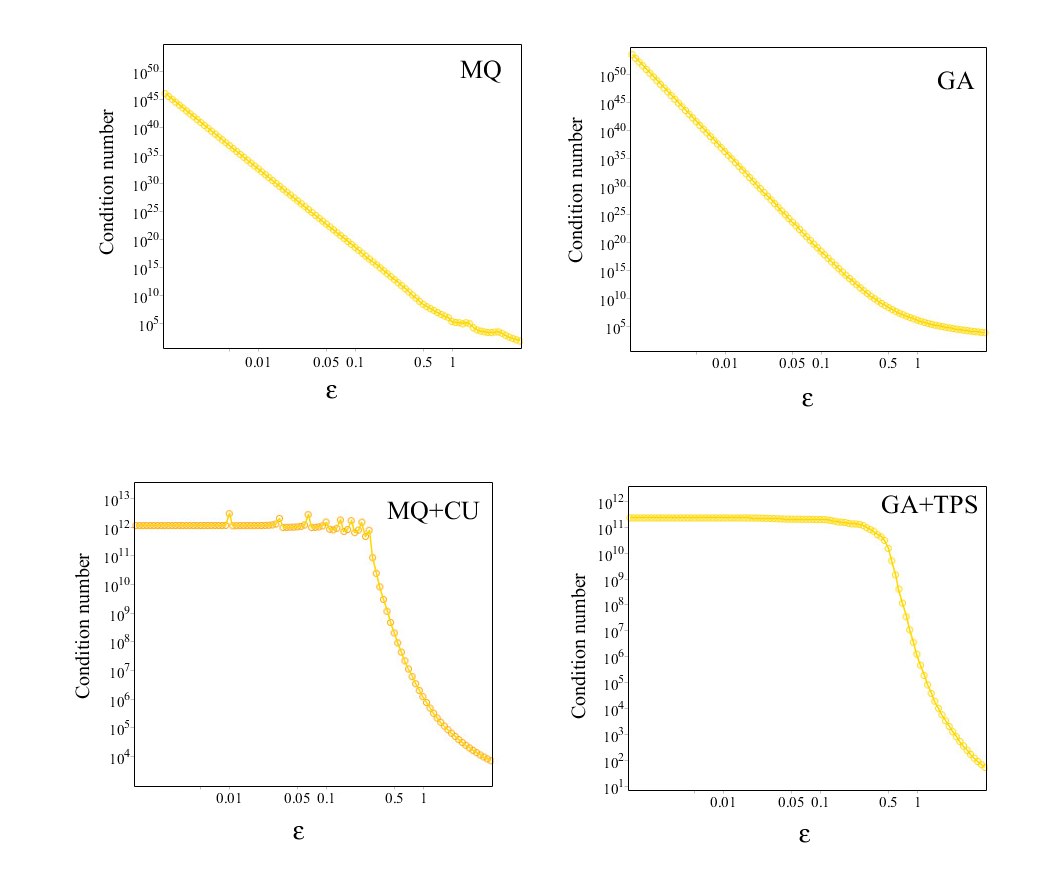}
\end{center}
\vspace*{-0.3cm} \caption{ \small Condition number  in Example \ref{y9} with  $n=10$.}
\label{ssin9}
\end{figure}

\end{example}

\newpage
\vspace{5cm}
\newpage
\vspace{5cm}

\newpage
\vspace{5cm}
\newpage
\hspace*{5cm}
\newpage
\vspace{5cm}
\newpage
\hspace*{5cm}

\begin{example} \label{y10}
We consider the 2D-WSFIE \cite{591}
\begin{align*}
u(x,t)-  \int_{\Omega} \ln \sqrt{(x-y)^2+(t-s)^2} \frac{(x+t)}{\sqrt{s^2+y^2+1}}  u(y,s) dy ds=f(x,t), \quad (x,t) \in  \Omega.
\end{align*}
The function $f$ is designed to find the exact solution $u(x,t)=e^{\frac{x+t-3}{2}}$. Here, $\Omega$ is depicted in Figure \ref{ssin6}  and  is determined by 
\[  \Omega= \Big \{ (y,s) \in \mathbb{R}^2: 0<s<1, 0.5-0.25 \sqrt{3s(3s-3)^2}  < y < 0.5+0.25 \sqrt{3s(3s-3)^2} \Bigr \}.  \]
The traditional approaches for numerically solving this problem encounter challenges because of  the irregular nature of the region. Nevertheless, the approach proposed in this paper, which utilizes a selection of nodes distributed throughout the domain $\Omega$, proves to be effective. 
Figure  \ref{ssin7} shows the distribution of nodes.  {\color{red} Here, a uniform distribution containing $M=1104$ points  in the domain $\Omega$ is used as the evaluation points  for calculating the MAE and RMSE.}
Table \ref{samplehG3} displays the MAE for both the suggested hybrid scheme and the standard  scheme   in \cite{591} across various values of $n$.
 In \cite{591}, authors considered $\varepsilon=0.1 \times \sqrt{n}$ for GA kernel and $\varepsilon=1.1+\frac{1}{\sqrt{n}}$ for MQ kernel. 
Also,  the RMSE convergence for various $n$ values  with different kernels is shown in Figure \ref{FX1}. 
According to Table \ref{samplehG3} and Figure \ref{FX1}, the GA+CU hybrid kernels provide a lower error rate in comparison to the MQ+CU hybrid kernels.
As illustrated in Figure \ref{FX1}, hybrid kernels produce error estimates that are both satisfactory and significantly better than those of pure kernels.
The absolute error with various kernels for  $n=88$ is also graphically displayed in Figure \ref{ssin15}.
Additionally, the condition number variations for various  choices  of n with different kernels are shown in Figure \ref{FX2}.
We find that when hybrid kernels are used, the system matrix's condition number is significantly lower than when  pure MQ and pure GA  kernels are used. Nonetheless, the CU kernels exhibit the lowest condition number values. 
In these {\color{red}figures,} we utilized $\rho=10^{-10}$ and $\varepsilon=0.2$. 
We can generally conclude that the error is significantly reduced when a small portion of a piecewise smooth radial kernel is combined with an infinitely smooth radial kernel.

\begin{figure}[!htb] 
\begin{center}
\includegraphics[width=0.4\textwidth]{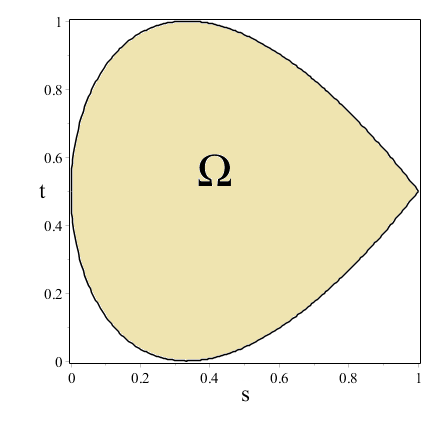}
\end{center}
\vspace*{-0.3cm} \caption{ \small Consideration domain $\Omega$ in Example  \ref{y10}.}
\label{ssin6}
\end{figure}

\begin{figure}[!htb] 
\begin{center}
\includegraphics[width=0.75\textwidth]{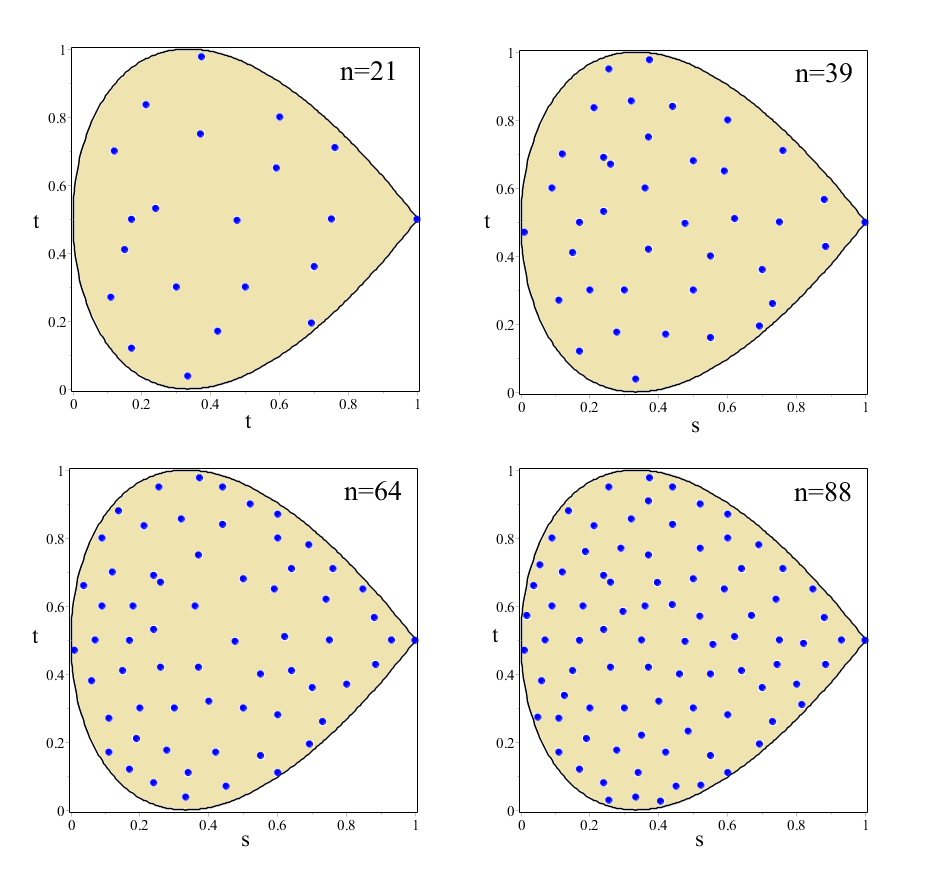}
\end{center}
\vspace*{-0.3cm} \caption{ \small Node distribution in
Example \ref{y10}.}
\label{ssin7}
\end{figure}

 \begin{table}[!htb] \centering  
\caption{\small Comparison of standard method and hybrid  method  in Example    \ref{y10}.} \vspace*{0.1cm} \scalebox{1.08}{ 
\resizebox{\linewidth}{!}{%
\begin{tabular}{lcccccccccccccccccc} \cmidrule[1pt](lr){1-14}   
n  &     & \multicolumn{3}{l}{Standard bases in \cite{591}} &  &     && \multicolumn{3}{c}{Hybrid bases}  \\  
\cmidrule(lr){2-6} \cmidrule(lr){7-14} 
  & GA  &     &    MQ   &     &  &   &  GA+CU &    &  &   &  MQ+CU  &    &  &  &   \\  
\cmidrule(lr){2-3} \cmidrule(lr){4-6}  \cmidrule(lr){7-10} \cmidrule(lr){11-14}
  & $\varepsilon=0.1 \times \sqrt{n}$ & $\Vert e_n \Vert_{\infty} $ &  $\varepsilon=1.1+\frac{1}{\sqrt{n}}$   &          $\Vert e_n \Vert_{\infty} $ &  &      $\varepsilon_{opt}$ & $\rho_{opt}$   &   $\Vert e_n \Vert_{\infty} $   &   {\color{red} Time (s)} & $\varepsilon_{opt}$  &   $\rho_{opt}$   &  $\Vert e_n \Vert_{\infty} $ &  {\color{red} Time (s) } \\ \cmidrule(lr){1-14} 
 11  & {$0.33$}  &  {$3.11 \times 10^{-4}$}  & {$1.40$}  &  {$5.93 \times 10^{-4}$} &   &   {$0.15 $} & {$5.71 \times 10^{-8}$} & {$1.69 \times 10^{-5}$} & {\color{red} 7.41}  & {$1.27$} &  {$2.34 \times 10^{-8}$}   &   {$6.34 \times 10^{-5}$} & {\color{red} 8.21}
    \\  \cmidrule(lr){1-14} 
 21  & {$0.45$}  &  {$2.18 \times 10^{-5}$}  & {$1.31$}  &  {$9.22 \times 10^{-5}$} &  &   {$0.21 $} & {$7.28 \times 10^{-8}$} & {$4.97 \times 10^{-7}$} & {\color{red} 18.14} & {$1.08 $}  & {$1.98 \times 10^{-8}$}  &  {$8.42 \times 10^{-7}$} & {\color{red} 19.74}
    \\   \cmidrule(lr){1-14} 
 39  & {$0.62$}  &  {$3.24 \times 10^{-7}$}  & {$1.26$}  &  {$6.35 \times 10^{-6}$} &  &   {$0.39 $} & {$1.67 \times 10^{-9}$} & {$4.29 \times 10^{-8}$} &  {\color{red} 37.21} & {$0.97 $} & {$4.97 \times 10^{-8}$}     &  {$8.75 \times 10^{-8}$} & {\color{red} 40.21}
    \\    \cmidrule(lr){1-14} 
 64  & {$0.80$}  &  {$1.57 \times 10^{-8}$}  & {$1.22$}  &  {$1.04 \times 10^{-6}$} &  &   {$0.53 $} & {$9.96 \times 10^{-10}$} & {$2.31 \times 10^{-9}$} & {\color{red} 67.89} &{$0.88$}  &   {$3.26 \times 10^{-9}$}    &  {$7.96 \times 10^{-9}$} & {\color{red} 71.27} 
    \\  
      \cmidrule(lr){1-14} 
 88  & {$0.93$}  &  {$2.48 \times 10^{-9}$}  & {$1.20$}  &  {$1.69 \times 10^{-7}$} &  &   {$0.67 $} & {$3.73 \times 10^{-10}$} & {$1.02 \times 10^{-10}$} &  {\color{red} 94.21} & {$0.79$}  &   {$4.07 \times 10^{-10}$}    &  {$5.21 \times 10^{-10}$} & {\color{red} 98.23} 
    \\  
    
 \cmidrule[1pt](lr){1-14}  
\end{tabular}}   }
\label{samplehG3}
\end{table}

\begin{figure}[!htb] 
\begin{center}
\includegraphics[width=0.85\textwidth]{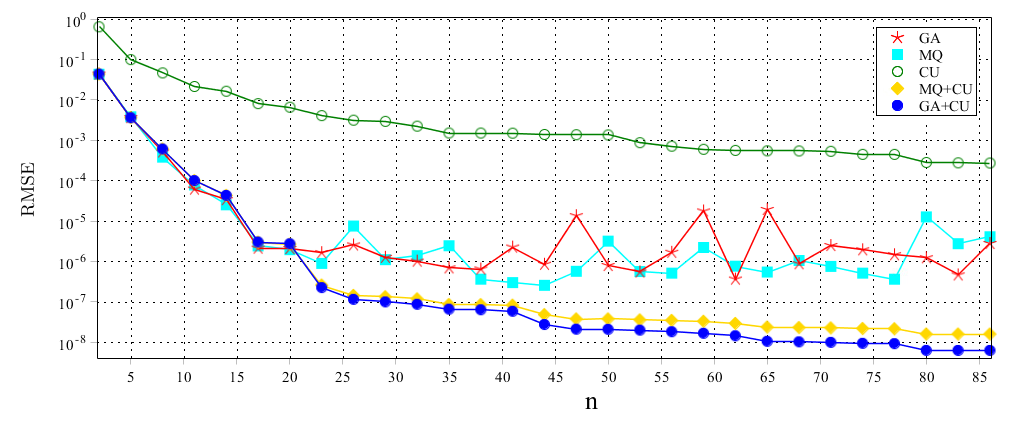}
\end{center}
\vspace*{-0.3cm} \caption{ \small RMSE  in Example \ref{y10} with various kernels.}
\label{FX1}
\end{figure}

\begin{figure}[!htb] 
\begin{center}
\includegraphics[width=0.85\textwidth]{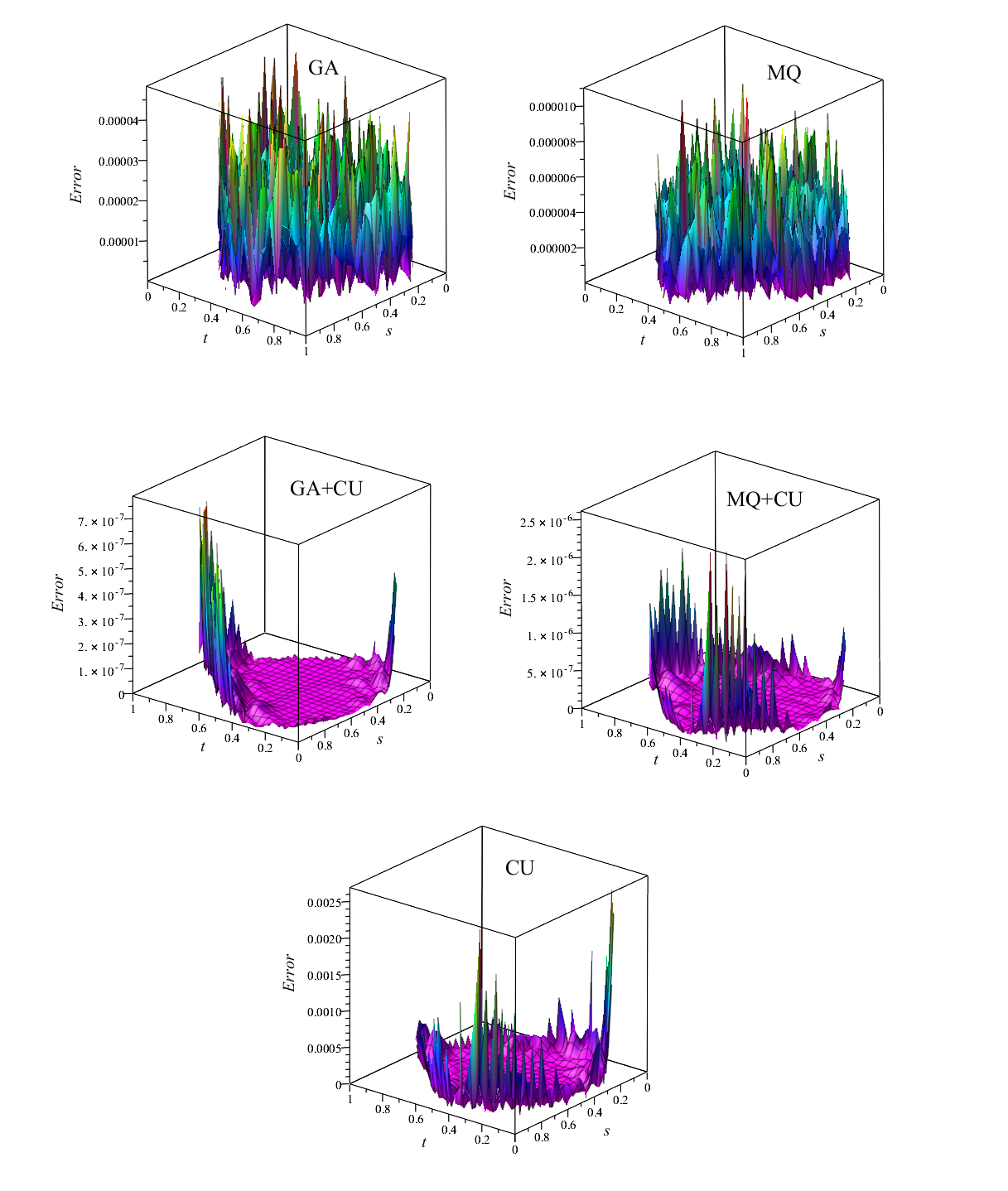}
\end{center}
\vspace*{-0.3cm} \caption{ \small Absolute error of Example \ref{y10}  with $n = 88$.}
\label{ssin15}
\end{figure}

\begin{figure}[!htb] 
\begin{center}
\includegraphics[width=0.85\textwidth]{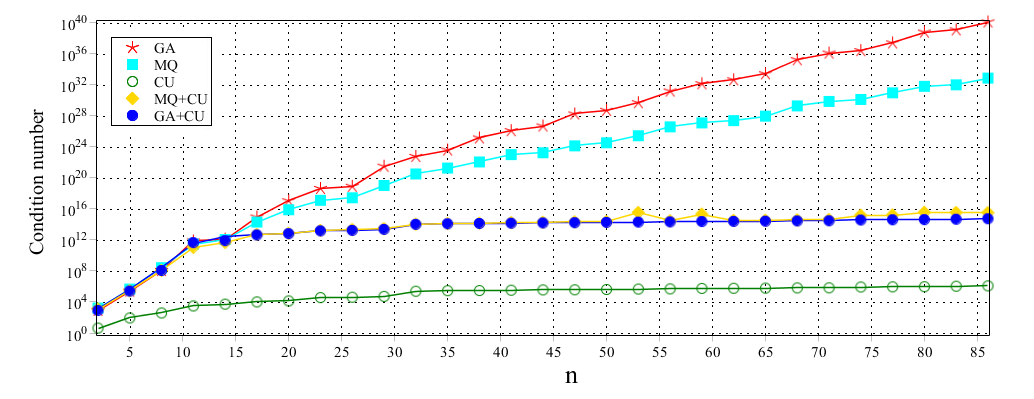}
\end{center}
\vspace*{-0.3cm} \caption{ \small Condition number  in Example \ref{y10} with various kernels. }
\label{FX2}
\end{figure}

\end{example}

\vspace{5cm}

\newpage
\vspace{5cm}

\newpage
\vspace{5cm}
\newpage
\hspace*{5cm}
\newpage
\vspace{5cm}
\newpage
\vspace{5cm}
\newpage
\hspace*{5cm}
{\color{red}
\begin{example} \label{y33}
Consider the following  2D-WSFIE \cite{591}
\begin{align*}
u(x,t)-  \int_{\Omega} \ln \sqrt{(x-y)^2+(t-s)^2} \frac{\sin(x+t)}{(x+t+2)e^{y^2+s^2}}  u(y,s) dy ds=f(x,t), \quad (x,t) \in  \Omega.
\end{align*} 
The function $f$ is designed to find the exact solution $u(x,t)=\frac{sin(x+t+1)}{x+t+1}$.
Here, $\Omega$ is the crescent  domain  which is drawn in Figure \ref{ssin60} and determined by   
\begin{align*} 
D=\Bigl \{(s,t) \in \mathbb{R}^2 : 0 \leqslant t \leqslant 1, \sqrt{0.25-(t-0.5)^2} \leqslant s \leqslant \sqrt{1-4(t-0.5)^2}\Bigl \}.
\end{align*}
The distribution of nodes is depicted in Figure \ref{L64}. {\color{red} Here, a uniform distribution containing $M=1328$ points  in the domain $\Omega$ is used as the evaluation points  for calculating the MAE and RMSE.}
Table \ref{samplehG210} displays the MAE for both the suggested hybrid scheme and the standard  scheme   in \cite{591} across various values of $n$.
Furthermore, the absolute error for different kernels with $n=69$ is graphically shown in Figure \ref{QS1}.
Also, Figure \ref{QS2} displays the condition number variations for different values of $n$ for different kernels. 
We observe that, the condition number of the system matrix using hybrid kernels is much less than the condition number of system matrix obtained using the pure GA and pure IMQ kernels. However, the CU kernel has the lowest values of the condition numbers.
In these figures, we used $\rho=10^{-10}$ and $\varepsilon=0.2$. 
Also, Figure \ref{QS3} and Figure \ref{QS4}, respectively, show the root mean square errors and the condition numbers as functions of $\varepsilon \in [0.01, 2]$ for different values of $\rho$, for pure GA kernel and GA+CU hybrid kernel with $n=69$ on a log-log scale. Figure \ref{QS3} shows that pure GA kernel ($\rho=0$) leads to a loss in accuracy for shape parameter $\varepsilon \lesssim 0.83$, whereas GA+CU hybrid kernel yields useful approximations for all values of shape parameters.
Also, while an decrease in the value of weight parameter from $\rho=1$ to $\rho=10^{-10}$ leads to a decrease of the the minimum root mean square errors from $ \mathcal{O}(10^{-3})$ down to $ \mathcal{O}(10^{-10})$, the further decrease to $\rho=10^{-12}$ does not yield further improvement. It was observed that the error plot suggests smaller values of $\rho$ for better accuracy, whereas the condition number plot suggests the larger values of $\rho$ for better stability.

\begin{figure}[!htb] 
\begin{center}
\includegraphics[width=0.4\textwidth]{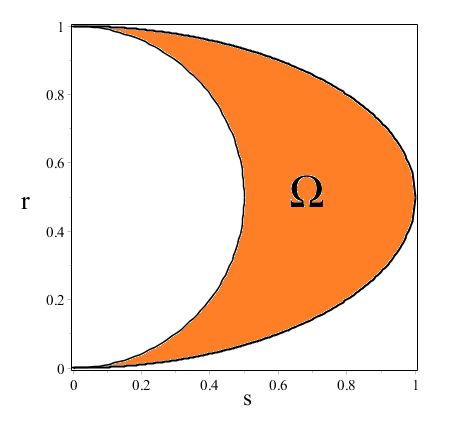}
\end{center}
\vspace*{-0.3cm} \caption{ \small Consideration domain $\Omega$ in Example  \ref{y33}.}
\label{ssin60}
\end{figure}

\begin{figure}[!htb] 
\begin{center}
\includegraphics[width=0.75\textwidth]{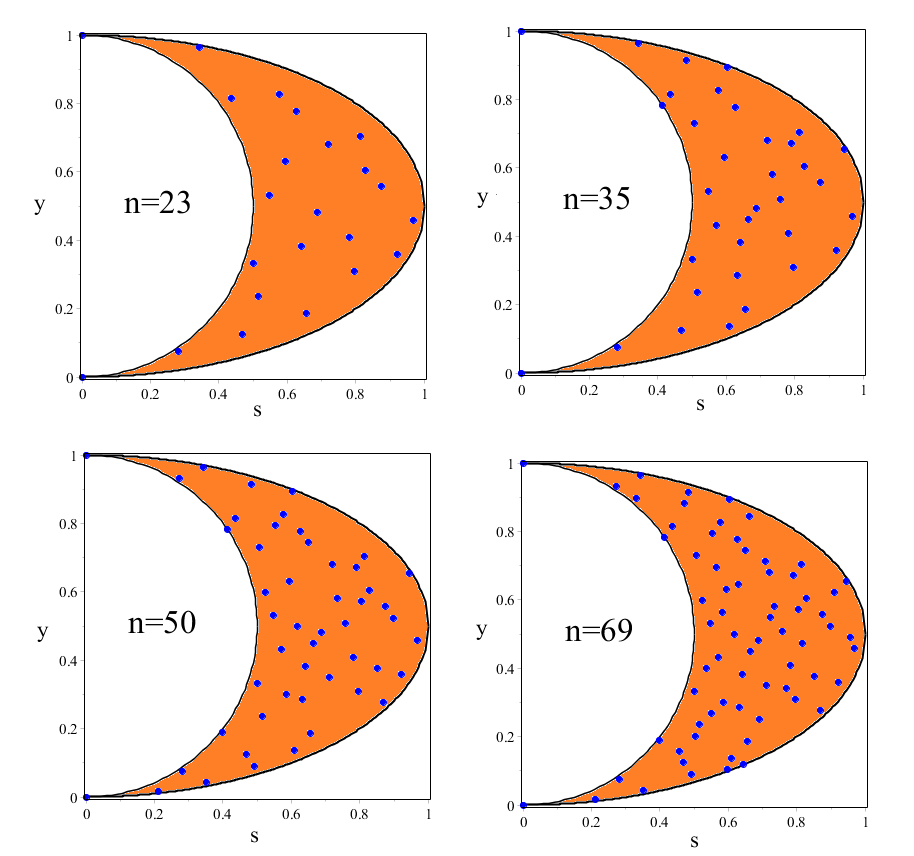}
\end{center}
\vspace*{-0.3cm} \caption{ \small Node distribution in
Example \ref{y33}.}
\label{L64}
\end{figure}

 \begin{table}[!htb] \centering  
\caption{\small Comparison of standard method and hybrid  method  in Example    \ref{y33}.} \vspace*{0.1cm} \scalebox{1.08}{ 
\resizebox{\linewidth}{!}{%
\begin{tabular}{lcccccccccccccccccc}  \cmidrule[1pt](lr){1-14}   
n  &     & \multicolumn{3}{l}{Standard bases in \cite{591}} &  &     &&&\multicolumn{3}{c}{Hybrid bases}  \\  
\cmidrule(lr){2-6} \cmidrule(lr){7-14} 
  & GA  &     &    IMQ   &     &  &   &  GA+CU &    &  &   &  IMQ+CU  &    &  &  &   \\  
\cmidrule(lr){2-3} \cmidrule(lr){4-6}  \cmidrule(lr){7-10} \cmidrule(lr){11-14}
  & $\varepsilon=0.1 \times \sqrt{n}$ & $\Vert e_n \Vert_{\infty} $ &  $\varepsilon=1.1+\frac{1}{\sqrt{n}}$   &          $\Vert e_n \Vert_{\infty} $ &  &      $\varepsilon_{opt}$ & $\rho_{opt}$   &   $\Vert e_n \Vert_{\infty} $   &    Time (s) & $\varepsilon_{opt}$  &   $\rho_{opt}$   &  $\Vert e_n \Vert_{\infty} $ &   Time (s)  \\ \cmidrule(lr){1-14} 
 13  & {$0.36$}  &  {$5.37 \times 10^{-4}$}  & {$1.37$}  &  {$2.56 \times 10^{-3}$} &  &   {$0.13 $} & {$4.37 \times 10^{-8}$} & {$1.07 \times 10^{-4}$} &   $8.67 $ & {$1.29$} &  {$6.39 \times 10^{-8}$}   &   {$4.75 \times 10^{-4}$}&   $9.44 $
    \\  \cmidrule(lr){1-14} 
 23  & {$0.47$}  &  {$9.97 \times 10^{-6}$}  & {$1.30$}  &  {$1.47 \times 10^{-4}$} &  &   {$0.21 $} & {$3.77 \times 10^{-8}$} & {$1.43 \times 10^{-6}$} &  $19.41 $ & {$1.23 $}  & {$2.49 \times 10^{-8}$}  &  {$5.41 \times 10^{-6}$} & $21.84 $
    \\   \cmidrule(lr){1-14} 
 35  & {$0.59$}  &  {$7.65 \times 10^{-7}$}  & {$1.26$}  &  {$3.40 \times 10^{-5}$} &  &   {$0.37 $} & {$1.46 \times 10^{-9}$} & {$3.35 \times 10^{-8}$} &  $33.18 $ & {$1.02 $} & {$5.01 \times 10^{-8}$}     &  {$7.14 \times 10^{-8}$}&  $39.71 $
    \\    \cmidrule(lr){1-14} 
 50  & {$0.70$}  &  {$1.16 \times 10^{-7}$}  & {$1.24$}  &  {$1.22 \times 10^{-5}$} &  &   {$0.51 $} & {$7.86 \times 10^{-9}$} & {$3.79 \times 10^{-9}$} &  $47.08 $ &{$0.89$}  &   {$3.26 \times 10^{-9}$}    &  {$8.43 \times 10^{-9}$} & $54.23 $
    \\  
      \cmidrule(lr){1-14} 
 69  & {$0.83$}  &  {$5.92 \times 10^{-9}$}  & {$1.22$}  &  {$7.98 \times 10^{-7}$} &  &   {$0.66 $} & {$7.19 \times 10^{-10}$} & {$4.18 \times 10^{-10}$} &  $71.35 $ & {$0.69$}  &   {$4.07 \times 10^{-10}$}    &  {$9.64 \times 10^{-10}$} & $77.02$
    \\  
    
     \cmidrule[1pt](lr){1-14} 
\end{tabular}}   }
\label{samplehG210}
\end{table}

\begin{figure}[!htb] 
\begin{center}
\includegraphics[width=0.85\textwidth]{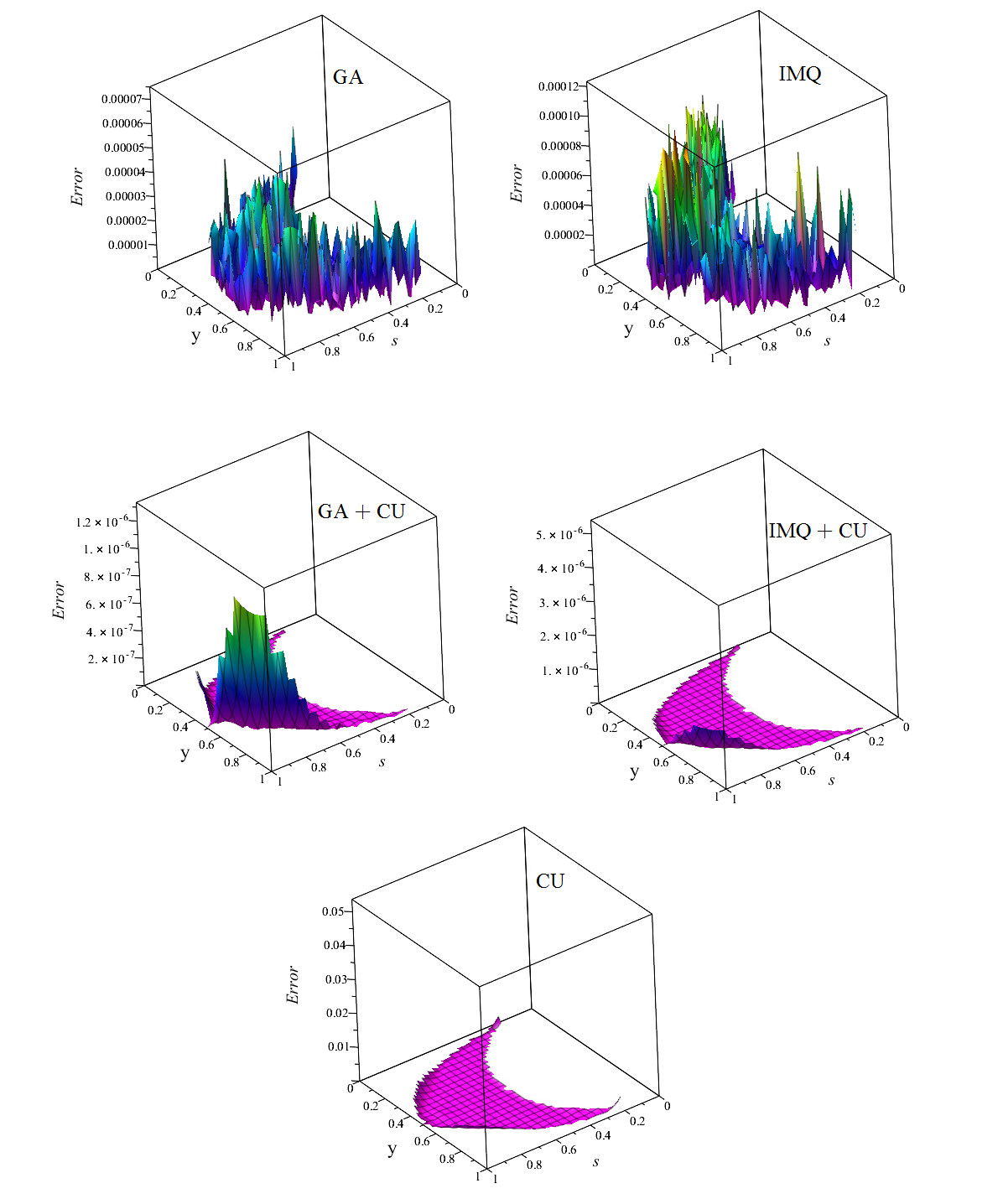}
\end{center}
\vspace*{-0.3cm} \caption{ \small Absolute error of Example \ref{y33} with $n = 69$.}
\label{QS1}
\end{figure}

\begin{figure}[!htb] 
\begin{center}
\includegraphics[width=0.85\textwidth]{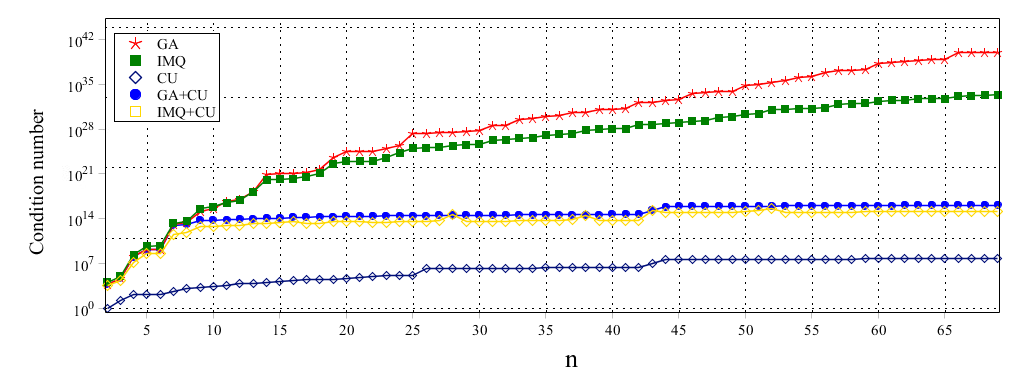}
\end{center}
\vspace*{-0.3cm} \caption{ \small Condition number  in Example \ref{y33} with various kernels. }
\label{QS2}
\end{figure}

\begin{figure}[!htb] 
\begin{center}
\includegraphics[width=0.85\textwidth]{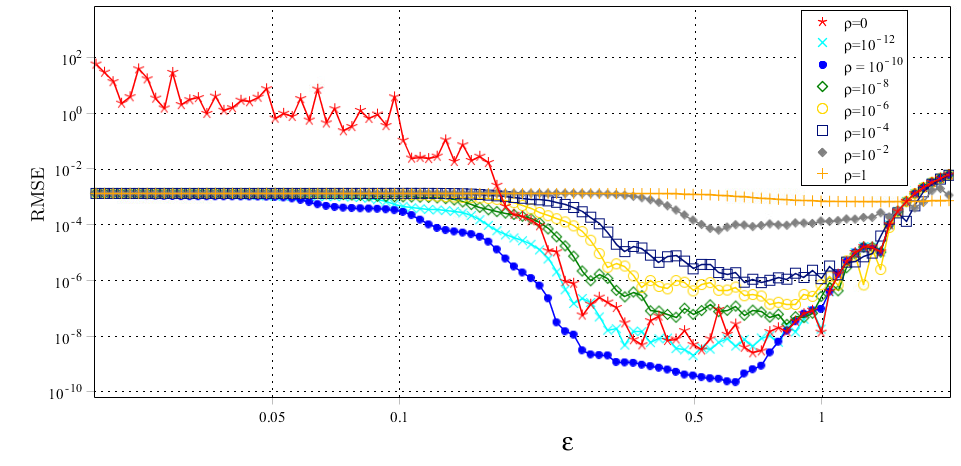}
\end{center}
\vspace*{-0.3cm} \caption{ \small RMSE convergence for different values of  $\rho$. }
\label{QS3}
\end{figure}

\begin{figure}[!htb] 
\begin{center}
\includegraphics[width=0.85\textwidth]{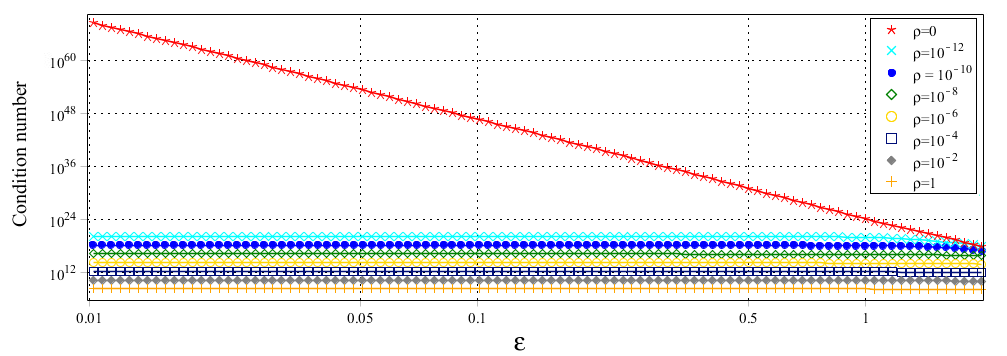}
\end{center}
\vspace*{-0.3cm} \caption{ \small Condition number variations for different values of  $\rho$. }
\label{QS4}
\end{figure}

\end{example}

\newpage
\vspace{5cm}
\newpage

\newpage
\vspace{5cm}
\newpage
\hspace*{5cm}
\vspace{5cm}
\vspace{5cm}
\newpage
\newpage
\vspace{5cm}
\newpage
\newpage
\hspace*{5cm}
\vspace{5cm}
\newpage

\vspace{5cm}
\newpage

\begin{example} \label{y17}
Consider the following  2D-WSFIE \cite{591}
\begin{align*}
u(x,t)-  \int_{\Omega} \ln \sqrt{(x-y)^2+(t-s)^2} \frac{x(1-y^2)}{(1+t)(1+s^2)}  u(y,s) dy ds=f(x,t), \quad (x,t) \in  \Omega.
\end{align*}
The function $f$ is designed to find the exact solution $u(x,t)=\ln (\frac{x^2+1}{t^2+1})$.
Here, $\Omega$ is the annular domain and decomposed by
$\Omega= \Omega_1 \cup  \Omega_2 $ which is drawn in Figure \ref{ssin600} and determined by   
\[  \Omega= \Big \{ (y,s) \in \mathbb{R}^2: (y-0.5)^2+(s-0.5)^2 \leqslant 0.25         \quad \textit{and} \quad  (y-0.5)^2+(s-0.5)^2 \geqslant 0.04        
 \Bigr \}.  \]
 Figure  \ref{ssin700} demonstrates the distribution of collocation points
in the considered domain. {\color{red} Here, a uniform distribution containing $M=1055$ points  in the domain $\Omega$ is used as the evaluation points  for calculating the MAE and RMSE.}
Table \ref{samplehG23} displays the maximum absolute error for both the suggested hybrid scheme and the standard  scheme   in \cite{591} across various values of $n$.
Also, Figure  \ref{ZC1} demonstrates the RMSE convergence for different values of $n$  for various kernels. 
The results presented in Table \ref{samplehG23} and Figure \ref{ZC1} indicate that GA+TPS hybrid bases leads to higher accuracy compared to IMQ+TPS bases.
Furthermore, it has been observed that the hybrid kernels exhibit a higher level of accuracy in comparison to the pure kernels.
The absolute error with various kernels for  $n=89$ is also graphically displayed in Figure \ref{MX10}.
Additionally, Figure \ref{FX20} displays the condition number variations
for different values of  $n$ for different kernels. 
We observe that, the condition number of the system matrix
using hybrid kernels is much less than the condition number of system matrix obtained using the pure GA
and pure IMQ kernels.  Nonetheless, the TPS kernels exhibit the lowest condition number values. 
In these Figures, we utilized $\rho=10^{-9}$ and $\varepsilon=0.25$. 

\begin{figure}[!htb] 
\begin{center}
\includegraphics[width=0.4\textwidth]{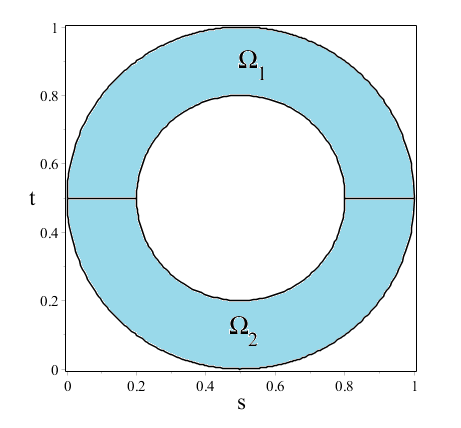}
\end{center}
\vspace*{-0.3cm} \caption{ \small Consideration domain $\Omega$ in Example  \ref{y17}.}
\label{ssin600}
\end{figure}

\begin{figure}[!htb] 
\begin{center}
\includegraphics[width=0.75\textwidth]{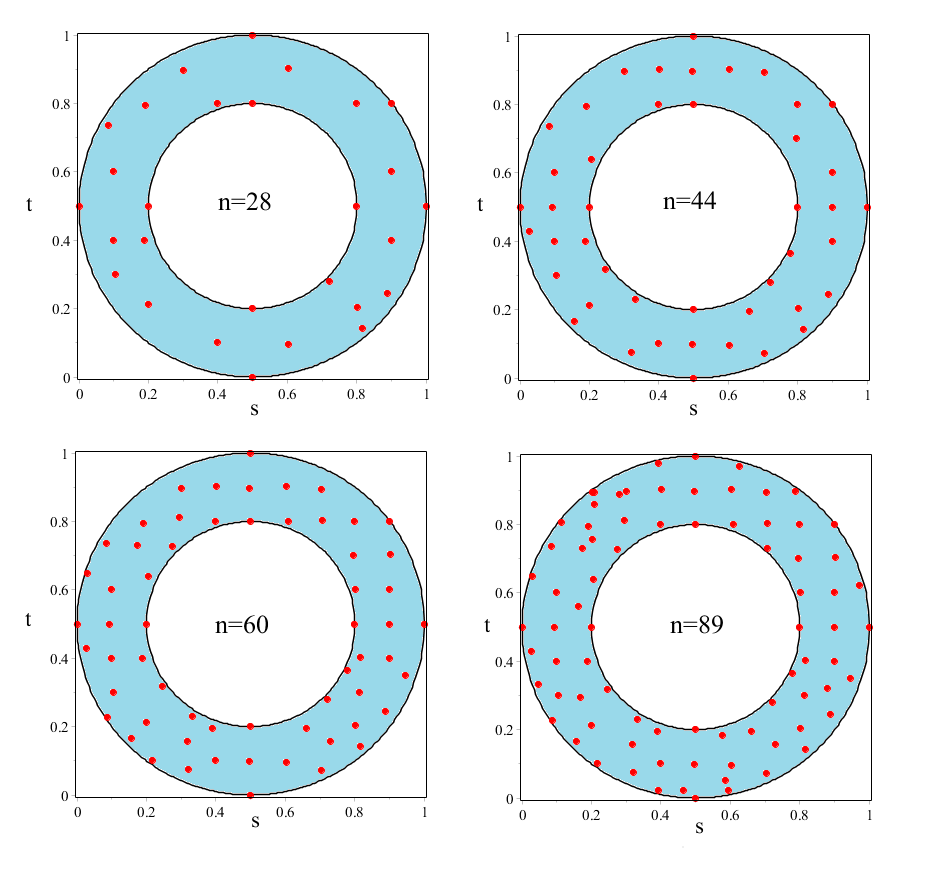}
\end{center}
\vspace*{-0.3cm} \caption{ \small Node distribution in
Example \ref{y17}.}
\label{ssin700}
\end{figure}

 \begin{table}[!htb] \centering  
\caption{\small Comparison of standard method and hybrid  method  in Example    \ref{y17}.} \vspace*{0.1cm} \scalebox{1.08}{ 
\resizebox{\linewidth}{!}{%
\begin{tabular}{lcccccccccccccccccc} \cmidrule[1pt](lr){1-13} 
n  &     & \multicolumn{3}{l}{Standard bases in \cite{591}} &  &     \multicolumn{3}{c}{Hybrid bases}  \\  
\cmidrule(lr){2-6} \cmidrule(lr){7-13} 
  & GA  &     &    IMQ   &     &  &   &  GA+TPS &    &     &  IMQ+TPS  &    &  &  &   \\  
\cmidrule(lr){2-3} \cmidrule(lr){4-6}  \cmidrule(lr){7-9} \cmidrule(lr){10-13}
  & $\varepsilon=0.1 \times \sqrt{n}$ & $\Vert e_n \Vert_{\infty} $ &  $\varepsilon=1.1+\frac{1}{\sqrt{n}}$   &          $\Vert e_n \Vert_{\infty} $ &  &     $\varepsilon_{opt}$ & $\rho_{opt}$   &   $\Vert e_n \Vert_{\infty} $   &    $\varepsilon_{opt}$  &   $\rho_{opt}$   &  $\Vert e_n \Vert_{\infty} $  \\ \cmidrule(lr){1-13} 
 14  & {$0.37$}  &  {$3.28 \times 10^{-3}$}  & {$1.36$}  &  {$3.11 \times 10^{-3}$} &  &   {$0.11 $} & {$1.67 \times 10^{-7}$} & {$5.29 \times 10^{-4}$} &    {$1.27$} &  {$7.83 \times 10^{-8}$}   &   {$6.48 \times 10^{-4}$}
    \\  \cmidrule(lr){1-13} 
 28  & {$0.52$}  &  {$3.51 \times 10^{-4}$}  & {$1.28$}  &  {$1.05 \times 10^{-3}$} &  &   {$0.17 $} & {$2.87 \times 10^{-7}$} & {$3.69 \times 10^{-5}$} &  {$1.04 $}  & {$9.89 \times 10^{-8}$}  &  {$5.21 \times 10^{-5}$} 
    \\   \cmidrule(lr){1-13} 
 44  & {$0.66$}  &  {$1.77 \times 10^{-5}$}  & {$1.25$}  &  {$1.08 \times 10^{-4}$} &  &   {$0.35 $} & {$2.78 \times 10^{-8}$} & {$1.28 \times 10^{-6}$} &  {$0.93 $} & {$5.71 \times 10^{-9}$}     &  {$4.21 \times 10^{-6}$}
    \\    \cmidrule(lr){1-13} 
 60  & {$0.77$}  &  {$2.58 \times 10^{-6}$}  & {$1.22$}  &  {$3.75 \times 10^{-5}$} &  &   {$0.49 $} & {$5.47 \times 10^{-9}$} & {$3.78 \times 10^{-7}$} &  {$0.81$}  &   {$7.14 \times 10^{-9}$}    &  {$5.67 \times 10^{-7}$} 
    \\  
      \cmidrule(lr){1-13} 
 89  & {$0.94$}  &  {$5.01 \times 10^{-8}$}  & {$1.20$}  &  {$2.56 \times 10^{-6}$} &  &   {$0.61 $} & {$1.17 \times 10^{-10}$} & {$7.41 \times 10^{-9}$} &  {$0.75$}  &   {$6.83 \times 10^{-10}$}    &  {$9.21 \times 10^{-9}$} 
    \\  
    
  \cmidrule[1pt](lr){1-13} 
\end{tabular}}   }
\label{samplehG23}
\end{table}

\begin{figure}[!htb] 
\begin{center}
\includegraphics[width=0.85\textwidth]{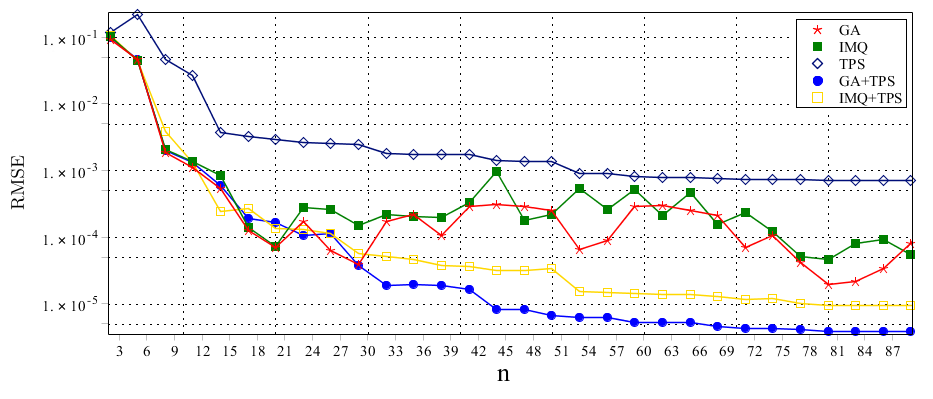}
\end{center}
\vspace*{-0.3cm} \caption{ \small RMSE in Example \ref{y17} with various kernels. }
\label{ZC1}
\end{figure}

\begin{figure}[!htb] 
\begin{center}
\includegraphics[width=0.85\textwidth]{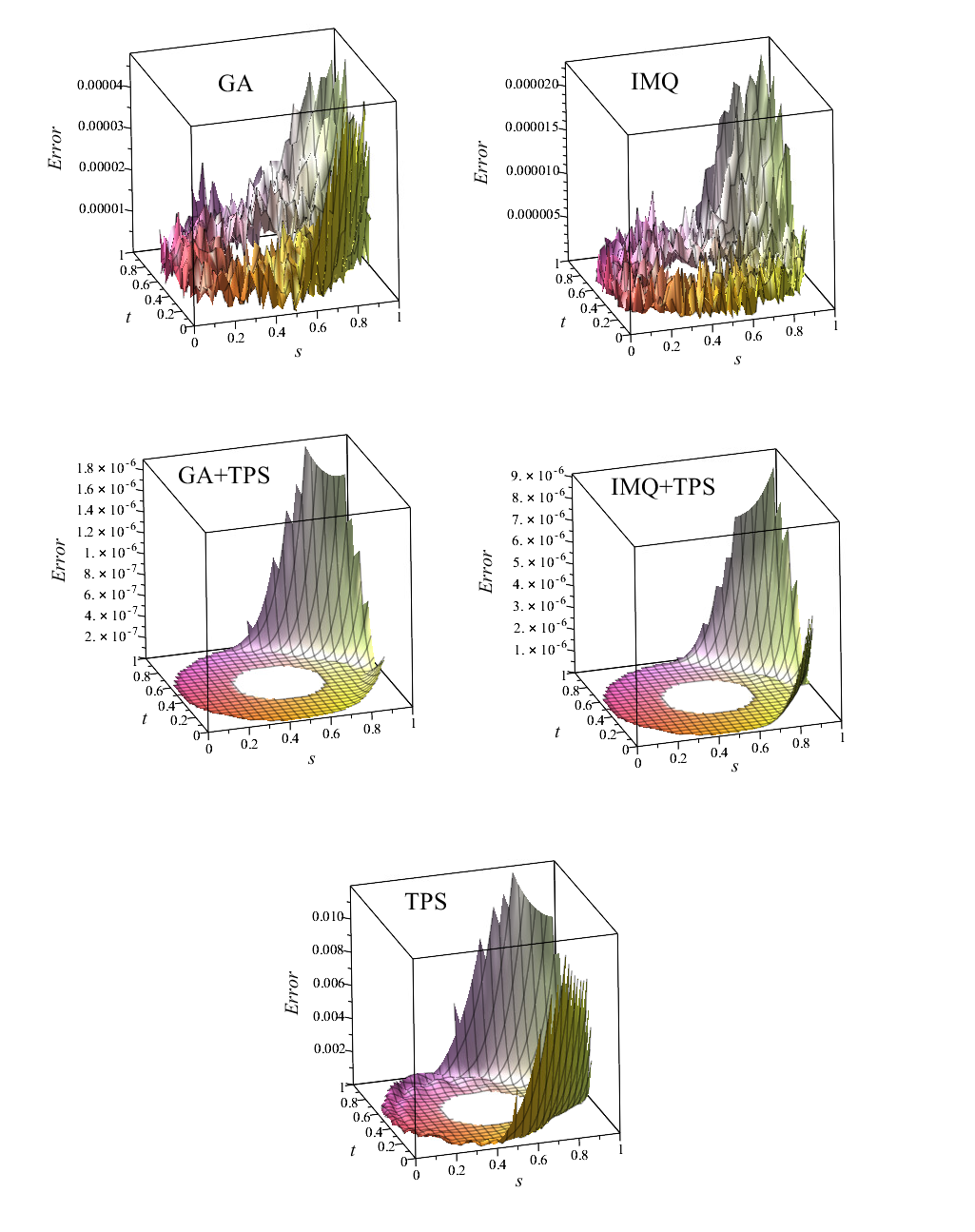}
\end{center}
\vspace*{-0.3cm} \caption{ \small Absolute error of Example \ref{y17} with $n = 89$.}
\label{MX10}
\end{figure}

\begin{figure}[!htb] 
\begin{center}
\includegraphics[width=0.85\textwidth]{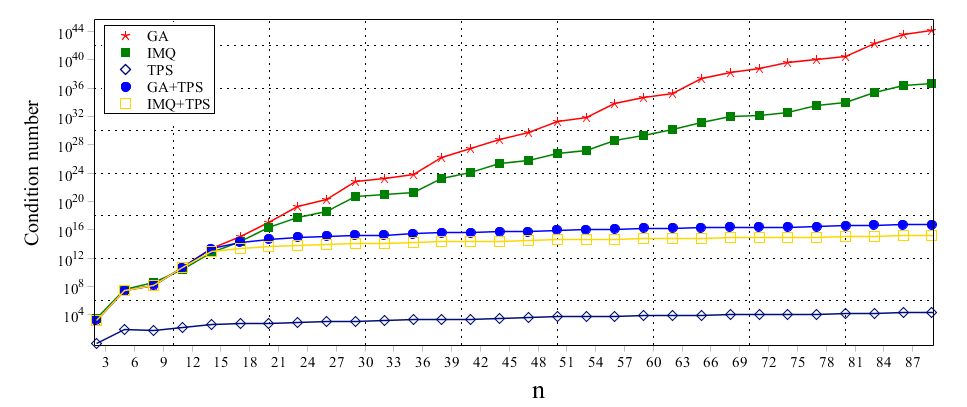}
\end{center}
\vspace*{-0.3cm} \caption{ \small Condition number  in Example \ref{y17} with various kernels. }
\label{FX20}
\end{figure}

\end{example}

\newpage
\vspace{5cm}
\newpage

\newpage
\vspace{5cm}
\newpage
\hspace*{5cm}
\vspace{5cm}

\newpage
\vspace{5cm}
\newpage
\hspace*{5cm}
\newpage
\vspace{5cm}
\newpage
\vspace{5cm}
\newpage

\begin{example} \label{y110}
Consider the following  3D-WSFIE 
\begin{align*}
u(x,t,z)-  \int_{\Omega} \ln \sqrt{(x-y)^2+(t-s)^2+(z-r)^2} \hspace{0.15cm} \frac{y+s+r+1}{e^{x^2+t^2+z^2}}  u(y,s,r) dy ds dr=f(x,t,z), \quad (x,t) \in  \Omega,
\end{align*}
where the function $f$ has been so chosen that the exact
solution of this equation is $u(x,t,z)=\frac{1}{x+t+z+1}$.
The domain $\Omega=\displaystyle \bigcup_{i=1}^{4} \Omega_i$ is shown in Figure \ref{FT1} and is defined as follows:
\begin{align*}
\Omega_1 &=\Bigl \{(r,s,y) : 0 \leqslant r \leqslant \frac{3}{10}, 0 \leqslant s \leqslant \frac{9}{10}, 0 \leqslant y \leqslant \frac{3}{10} \Bigl \}, \vspace*{0.15cm} \\
\Omega_2 &=\Bigl \{(r,s,y) : \frac{3}{10} \leqslant r \leqslant \frac{6}{10}, -r+ \frac{9}{10} \leqslant s \leqslant -r+ \frac{12}{10}, 0 \leqslant y \leqslant \frac{3}{10} \Bigl \}, \vspace*{0.15cm} \\
\Omega_3 &=\Bigl \{(r,s,y) : \frac{6}{10} \leqslant r \leqslant \frac{9}{10}, r- \frac{3}{10} \leqslant s \leqslant r, 0 \leqslant y \leqslant \frac{3}{10} \Bigl \}, \vspace*{0.15cm} \\
\Omega_4 &=\Bigl \{(r,s,y) : \frac{9}{10} \leqslant r \leqslant \frac{12}{10}, 0 \leqslant s \leqslant \frac{9}{10}, 0 \leqslant y \leqslant \frac{3}{10} \Bigl \}.
\end{align*}
 Figure  \ref{FT2} demonstrates the distribution of collocation points
in the considered domain. {\color{red} Here, a uniform distribution containing $M=13824$ points  in the domain $\Omega$ is used as the evaluation points  for calculating the MAE and RMSE.}
 Result of PSO approach  for various $n$ values  are
documented in Table  \ref{samplehGA}.
Also, Figure  \ref{AN1} demonstrates the RMSE convergence for different values of $n$  for various kernels. 
According to Table \ref{samplehGA} and Figure \ref{AN1}, the GA+CU hybrid kernels provide a lower error rate in comparison to the IMQ+CU hybrid kernels.
As illustrated in Figure \ref{AN1}, hybrid kernels produce error estimates that are both satisfactory and significantly better than those of pure kernels.
Additionally, Figure \ref{AN2} displays the condition number variations
for different values of  $n$ for different kernels. 
It can be seen that  the condition number of the system matrix
using hybrid kernels is much less than the condition number of system matrix obtained using the pure GA
and pure IMQ kernels.  Nonetheless, the CU kernels exhibit the lowest condition number values. 
In these Figures, we utilized $\rho=10^{-10}$ and $\varepsilon=0.25$. 
Overall, proposed method with simple hybridized kernel gives a reasonable comprise both on accuracy and conditioning.
\begin{figure}[!htb] 
\begin{center}
\includegraphics[width=0.4\textwidth]{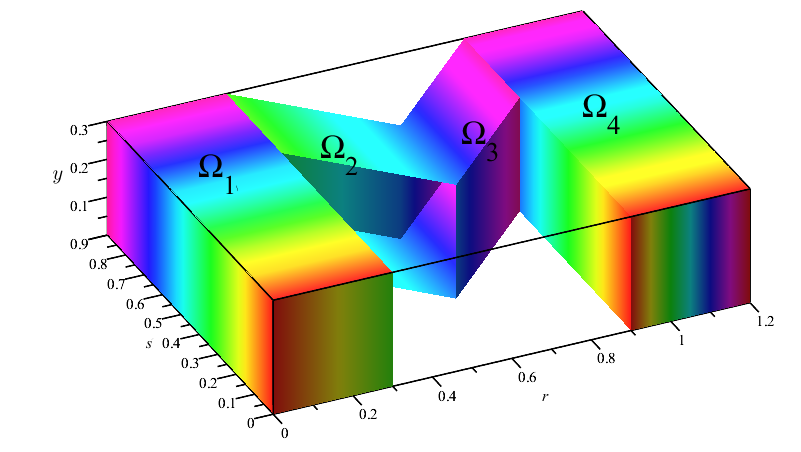}
\end{center}
\vspace*{-0.3cm} \caption{ \small Consideration domain $\Omega$ in Example  \ref{y110}.}
\label{FT1}
\end{figure}

\begin{figure}[!htb] 
\begin{center}
\includegraphics[width=0.75\textwidth]{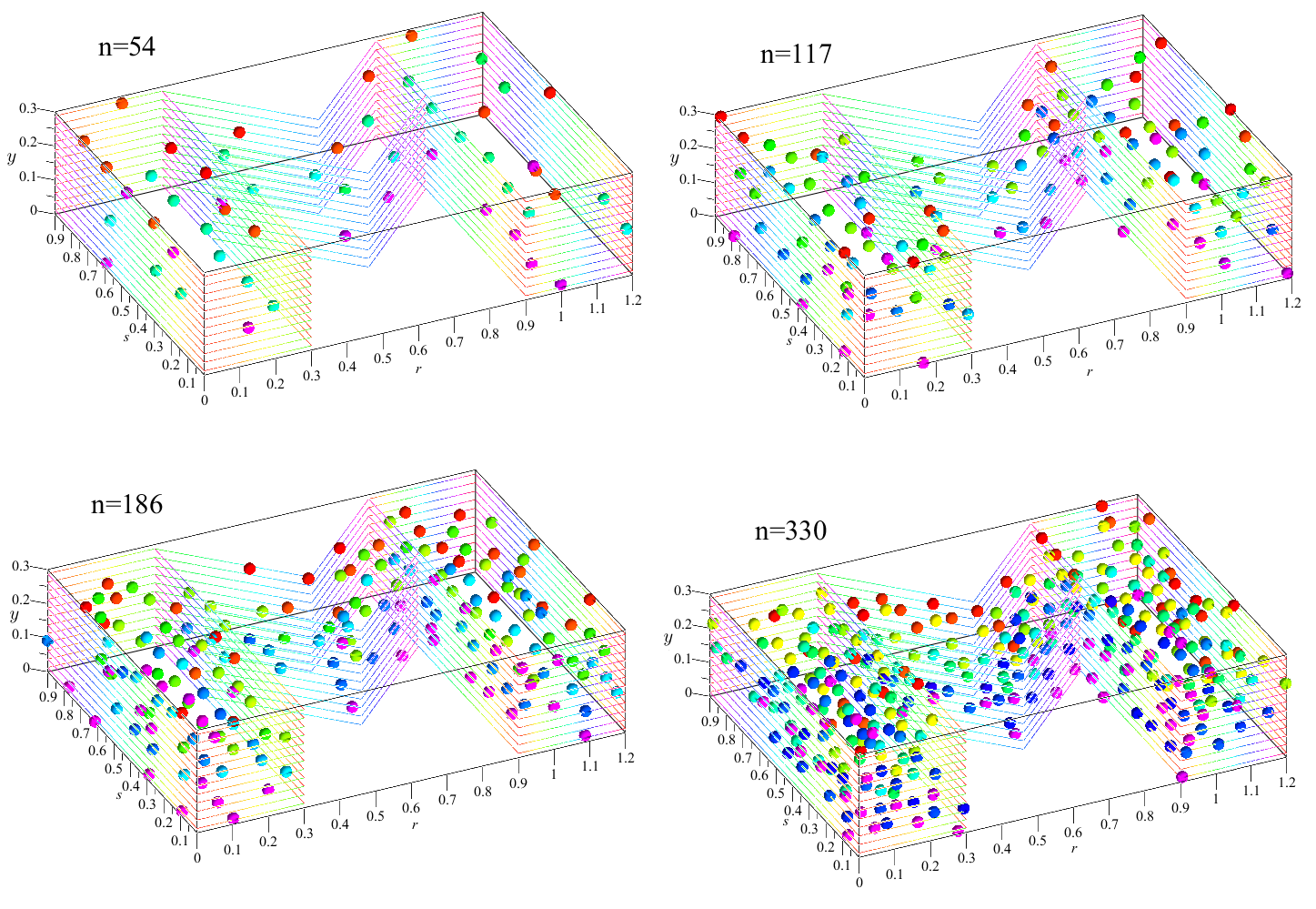}
\end{center}
\vspace*{-0.3cm} \caption{ \small Node distribution in
Example \ref{y110}.}
\label{FT2}
\end{figure}

 \begin{table}[!htb] \centering  
\caption{\small Comparison of standard method and hybrid  method  in Example    \ref{y110}.} \vspace*{0.1cm} \scalebox{1.08}{ 
\resizebox{\linewidth}{!}{%
\begin{tabular}{lcccccccccccccccccc} \cmidrule[1pt](lr){1-13}  
n  &     &  \multicolumn{3}{l}{ \quad  Standard bases} &  &     \multicolumn{3}{c}{\quad  \quad  \quad \quad   \hspace*{2cm} Hybrid bases}  \\  
\cmidrule(lr){2-6} \cmidrule(lr){7-13} 
  & GA  &     &    IMQ   &     &  &   &  GA+CU &    &     &  IMQ+CU  &    &  &  &   \\  
\cmidrule(lr){2-3} \cmidrule(lr){4-6}  \cmidrule(lr){7-9} \cmidrule(lr){10-13}
  & $\varepsilon_{opt}$ & $\Vert e_n \Vert_{\infty} $ &  $\varepsilon_{opt}$   &          $\Vert e_n \Vert_{\infty} $ &  &     $\varepsilon_{opt}$ & $\rho_{opt}$   &   $\Vert e_n \Vert_{\infty} $   &    $\varepsilon_{opt}$  &   $\rho_{opt}$   &  $\Vert e_n \Vert_{\infty} $  \\ \cmidrule(lr){1-13} 
 39  & {$0.42$}  &  {$1.98 \times 10^{-3}$}  & {$1.74$}  &  {$4.12 \times 10^{-2}$} &  &   {$0.07 $} & {$4.77 \times 10^{-8}$} & {$3.27 \times 10^{-4}$} &    {$1.27$} &  {$1.18 \times 10^{-8}$}   &   {$6.24 \times 10^{-4}$}
    \\  \cmidrule(lr){1-13} 
 54  & {$0.62$}  &  {$2.83 \times 10^{-4}$}  & {$1.63$}  &  {$2.07 \times 10^{-3}$} &  &   {$0.14 $} & {$6.34 \times 10^{-8}$} & {$2.61 \times 10^{-5}$} &  {$1.04 $}  & {$4.67 \times 10^{-8}$}  &  {$6.42 \times 10^{-5}$} 
    \\   \cmidrule(lr){1-13} 
 117  & {$0.76$}  &  {$3.37 \times 10^{-5}$}  & {$1.48$}  &  {$1.76 \times 10^{-4}$} &  &   {$0.26 $} & {$5.37 \times 10^{-9}$} & {$2.38 \times 10^{-6}$} &  {$0.93 $} & {$8.97 \times 10^{-9}$}     &  {$7.11 \times 10^{-6}$}
    \\    \cmidrule(lr){1-13} 
 186  & {$0.93$}  &  {$5.61 \times 10^{-6}$}  & {$1.37$}  &  {$3.47 \times 10^{-5}$} &  &   {$0.38 $} & {$4.81 \times 10^{-9}$} & {$3.29 \times 10^{-7}$} &  {$0.81$}  &   {$2.97 \times 10^{-10}$}    &  {$8.63 \times 10^{-7}$} 
    \\  
      \cmidrule(lr){1-13} 
 330  & {$1.12$}  &  {$7.23 \times 10^{-7}$}  & {$1.24$}  &  {$2.86 \times 10^{-6}$} &  &   {$0.57 $} & {$2.24 \times 10^{-10}$} & {$2.48 \times 10^{-8}$} &  {$0.75$}  &   {$4.48 \times 10^{-10}$}    &  {$9.21 \times 10^{-8}$} 
    \\  
    
  \cmidrule[1pt](lr){1-13} 
\end{tabular}}   }
\label{samplehGA}
\end{table}

\begin{figure}[!htb] 
\begin{center}
\includegraphics[width=0.95\textwidth]{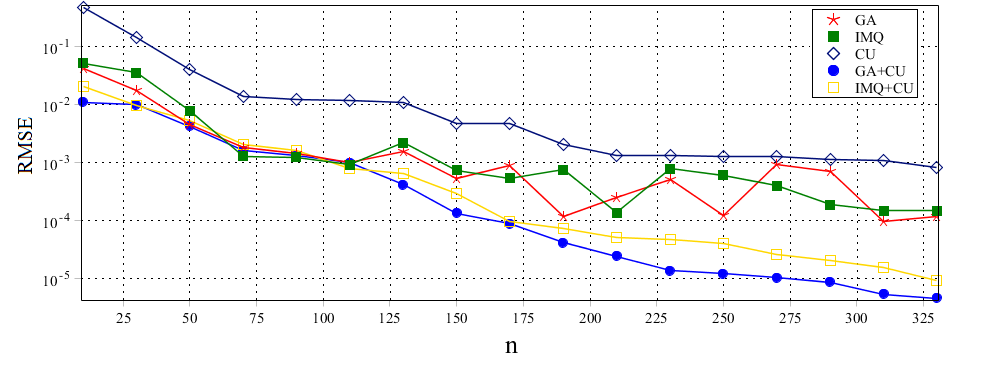}
\end{center}
\vspace*{-0.3cm} \caption{ \small RMSE  in Example \ref{y110} with various kernels.}
\label{AN1}
\end{figure}

\begin{figure}[!htb] 
\begin{center}
\includegraphics[width=0.95\textwidth]{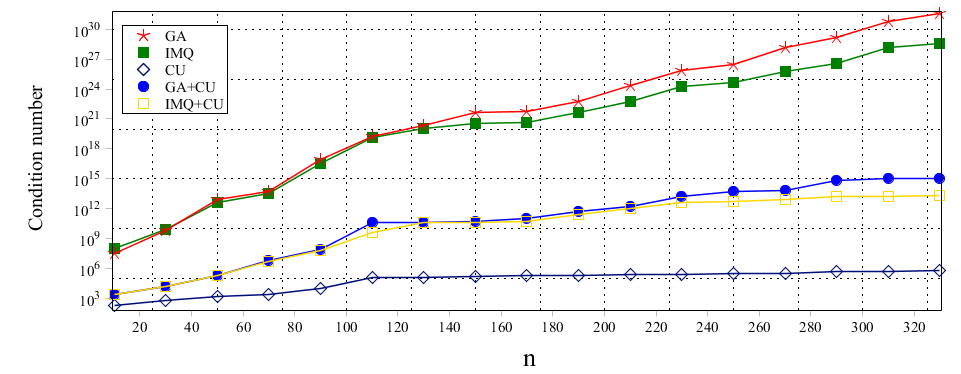}
\end{center}
\vspace*{-0.3cm} \caption{ \small Condition number  in Example \ref{y110} with various kernels. }
\label{AN2}
\end{figure}

\end{example}

}

\vspace{5cm}

\newpage
\vspace{5cm}
\newpage
\hspace*{5cm}
\newpage
\vspace{5cm}

\newpage

\section{Concluding remarks and future directions }\label{sec9}
 In this work, an efficient interpolation approach based on HRKs has been  provided to find the approximate solution of the linear WSFIEs of the second kind.  The technique employs hybrid kernels built on  {\color{red} scattered } nodes as a basis in the collocation method, where all integrals involved in this method are approximated using the non-uniform  CGL integration rule.
Hybrid kernels enhance the algorithm's stability and flexibility, allowing it to handle computations efficiently even with very small shape parameters as well as with relatively larger degrees of freedom.
By applying this method, solving mentioned  IEs are  converted to the solving a system of linear equations.
Also, {\color{red}a } convergence analysis of the presented method was proved. 
 Moreover, we employed the PSO algorithm to determine the  shape parameter and weight coefficient  in hybrid kernels when numerically approximating the WSFIEs.  Furthermore, we conducted a comparison between the numerical results achieved using pure and hybrid  bases. The numerical simulations indicate that as $n$ increases, the hybrid kernels exhibit greater accuracy compared to the pure kernels. Also,  numerical findings demonstrate that a tiny doping of the piecewise smooth radial kernels into the infinitely smooth radial kernels diminishes the condition number of the system matrix, effectively addressing the ill-conditioned issue associated with the infinitely smooth radial kernels. Also, the presented hybrid algorithm do not need a structured mesh, and therefore be applied to approximate complex geometry problems using a set of dispersed nodes.
In conclusion, the  proposed algorithm is a valuable tool that opens up new avenue for solving different problems in the field of science and engineering.

\section*{Declaration of Competing Interest}
The authors declare that they have no known competing financial interests or personal relationships that could
have appeared to influence the work reported in this paper.
\section*{Acknowledgments}
The authors sincerely thank the anonymous referees for their valuable comments and suggestions, which significantly improved the quality of this paper.


\begin{thebibliography}{00}
\bibitem{A1}
B. Kress. Linear Integral Equations. Springer-Verlag, Berlin, 1989. 
\bibitem{1}  
K. Mei, J. Van Bladel. Low-frequency scattering by rectangular cylinders. IEEE Trans. Antennas Propag., 11 (1963) 52-56. 
\bibitem{2}  
	G. T. Symm. An integral equation method in conformal mapping. Numer. Math., 9 (1966) 250-258. 
\bibitem{3}
R. P. Banaugh, W. Goldsmith. Diffraction of steady acoustic waves by surfaces of arbitrary shape. J. Acoust. Soc. Amer., 35 (1963) 1590-1601. 
\bibitem{4}
R. P. Banaugh, W. Goldsmith. Diffraction of steady elastic waves by surfaces of arbitrary shape. J. Appl. Mech., 30 (1963) 589-597. 
\bibitem{5}
E. Hopf. Mathematical Problems of Radiative Equilibrium, Stechert-Hafner Service Agency, New York, 1964. 
\bibitem{6}
P. L. Auer, C. S. Gardner. Note on singular integral equations of the Kirkwood-Riseman type. Chem. Phys., 23 (1955) 1545-1546. 
\bibitem{7}
A. Chakrabarti, S. R. Manam, S. Banerjea. Scattering of surface water waves involving a vertical barrier with a gap. J. Eng. Math., 45 (2003) 183-194. 
\bibitem{80}
N. I. Muskhelishvili, Singular Integral Equations, P. Noordhoff, Groningen,  1953. 
\bibitem{7x}
I. G. Graham. Collocation methods for two dimensional weakly singular integral equations. J. Austral. Math. Soc. (Series B)., 22 (1981) 456-473.
\bibitem{8}
K. E. Atkinson. The Numerical Solution of Integral Equations of the Second Kind. Cambridge University Press, 1997.
\bibitem{A2}
A. Pedas, G. Vainikko. Superconvergence of piecewise polynomial collocations for nonlinear weakly singular integral equations. J. Integral Equ. Appl., 9
(1997) 379-406. 
\bibitem{A3}
A. Pedas, G. Vainikko. Smoothing transformation and piecewise polynomial projection methods for weakly singular Fredholm integral equations. Commun. Pure Appl. Anal., 5 (2006) 395-413. 
\bibitem{A5}
 V.  Dominguez. High-order collocation and quadrature methods for some logarithmic kernel integral equations
on open arcs. J. Comput. Appl. Math.,  161 (1) (2003) 145-159. 
\bibitem{A6}
A. A. Gusenkova, N. B. Pleshchinskii. Integral equations with
logarithmic singularities in the kernels of boundary-value
problems of plane elasticity theory for regions with a defect. J.
Appl. Maths. Mechs., 64 (3) (2000) 435-441. 
\bibitem{A7}
G. Long, G. Nelakanti, X. Zhang. Iterated fast multiscale
Galerkin methods for Fredholm integral equations of second kind with weakly singular kernels. Appl. Numer. Math., 62 (3)
(2012) 201-211.
\bibitem{A8}
Y. Cao, M. Huang, L. Liu, Y. Xu. Hybrid collocation methods
for Fredholm integral equations with weakly singular kernels.
Appl. Numer. Math., 57 (2007) 549-561 (5-7 SPEC. ISS.). 
\bibitem{A9}
T. Okayama, T. Matsuo, M. Sugihara. Sinc-collocation methods
for weakly singular Fredholm integral equations of the second
kind. J. Comput. Appl. Math., 234 (4) (2010) 1211-1227. 
\bibitem{A10}
W. Chen, W. Lin. Galerkin trigonometric wavelet methods for
the natural boundary integral equations. Appl. Math. Comput.,
121 (1) (2001) 75-92. 
\bibitem{A11}
A. M. Wazwaz, R. Rach, J. Duan. The modified Adomian
decomposition method and the noise terms phenomenon for
solving nonlinear weakly-singular volterra and Fredholm
integral equations. Cent. Eur. J. Eng., 3 (4) (2013) 669-678. 
\bibitem{A12}
S. A. Khuri, A. M. Wazwaz. The decomposition method for
solving a second kind Fredholm integral equation with a
logarithmic kernel. Int. J. Comput. Math., 61 (1-2) (1996) 103-
110. 
\bibitem{A13}
X. Tang, Z. Pang, T. Zhu, J. Liu. Wavelet numerical solutions
for weakly singular Fredholm integral equations of the second
kind. Wuhan Univ. J. Nat. Sci., 12 (3) (2007) 437-441.  
\bibitem{A14}
H. Adibi, P. Assari. On the numerical solution of weakly
singular Fredholm integral equations of the second kind using
Legendre wavelets. J. Vib. Control., 17 (5) (2011) 689-698.
\bibitem{A15}
U. Lepik. Solving integral and differential equations by the aid
of non-uniform Haar wavelets. Appl. Math. Comput., 198 (1)
(2008) 326-332.  
\bibitem{A16}
A. Molabahrami, B. Khouider, M. Jalalian. A discrete collocation method based on the radial basis functions for solving system of integral equations of the second kind. Math. Meth. Appl. Sci., 185 (2023) 278-294. 
\bibitem{A17}
P. Assari, H. Adibi, M. Dehghan. The numerical solution of weakly singular integral equations based on the meshless product integration (MPI) method
with error analysis. Appl. Numer. Math., 81 (2014) 76-93. 
\bibitem{10}
P. Assari, M. Dehghan. A meshless method for the numerical solution of nonlinear weakly singular integral equations using radial basis functions. Eur. Phys. J. Plus., 132 (2017) 23-1.
\bibitem{11}
P. Assari, H. Adibi, M. Dehghan. A numerical method for solving linear integral equations of the second kind on the non-rectangular domains based on the meshless method. Appl. Math. Model., 37(22) (2013) 9269-9294.
\bibitem{12}
P. Assari, H. Adibi, M. Dehghan. A meshless method for solving nonlinear two dimensional integral equations of the second kind on non-rectangular domains using radial basis functions with error analysis. J. Comput. Appl. Math., 239(1) (2013) 72-92.
\bibitem{13}
P. Assari, M. Dehghan. A meshless discrete collocation method for the numerical solution of singular-logarithmic boundary integral equations utilizing radial basis functions. Appl. Math. Comput., 315 (2017) 424-444.
\bibitem{14}
M. Esmaeilbeigi, F. Mirzaee, D. Moazami. Radial basis functions method for solving three dimensional linear Fredholm integral equations on the cubic domains. Iran. J. Numer. Anal. Optim., 7(2) (2017) 15-38.
\bibitem{15}
M. Esmaeilbeigi, F. Mirzaee, D. Moazami. A meshfree method for solving multidimensional linear Fredholm integral equations on the hypercube domains. Appl. Math. Comput., 298 (2017) 236-246.
\bibitem{16}
H. Esmaeili, D. Moazami. A discrete collocation scheme to solve Fredholm integral equations of the second kind in high dimensions using radial kernels. SeMA., 78 (2021) 93-117.
\bibitem{17}
H. Esmaeili, D. Moazami. A kernel-based technique to solve three-dimensional linear Fredholm integral equations of the second kind over general domains. Comput. Appl. Math., 38(4) (2019) 181.
\bibitem{18}
Siraj-ul Islam , I. Aziz, Zaheer-ud-Din. Meshless methods for multivariate highly oscillatory Fredholm integral equations. Eng. Anal. Bound. Elem., 53 (2015) 100-112.
\bibitem{19}
H. Esmaeili, D. Moazami. A stable kernel-based technique for solving linear Fredholm integral equations of the second kind and its applications. Eng. Anal. Bound. Elem., 116 (2020) 48-63.
\bibitem{200}
H. Esmaeili, D. Moazami. Application of Hilbert–Schmidt SVD approach to solve linear two-dimensional Fredholm integral equations of the second kind. Comput. Appl. Math., 39 (2020) 1-22.
\bibitem{220}
B. Fornberg, G. Wright. Stable computation of multiquadric interpolants for all values of the shape parameter. Comput. Math. Appl., 48(5) (2004) 853-867.
\bibitem{20}
B. Fornberg, E. Larson, N. Flayer.  Stable computations with Gaussian radial basis functions. SIAM. J. Sci. Comput., 33 (2011) 869-892.
\bibitem{21}
B. Fornberg, C. Piret. A stable algorithm for flat radial basis functions on the sphere. SIAM. J. Sci. Comput., 30(1) (2007) 60-80.
\bibitem{22}
G. Fasshauer, M. McCount. Kernel based approximation method using MATLAB. World Scientific, Interdisciplinary Mathematical Sciences, 2015.
\bibitem{4300}
P. K. Mishra, S. K. Nath, G. Kosec, G. E. Fasshauer. Hybrid Gaussian-cubic radial basis functions for scattered
data interpolation. Comput. Geosci., 22(5)  (2018) 1203-1218 .
\bibitem{43}
P. K. Mishra, G. E. Fasshauer, M. K. Sen, L. Ling. A stabilized radial basis-finite difference (RBF-FD) method with hybrid kernels. Comput. Math. Appl., 77(9) (2019) 2354-2368.
\bibitem{Yang1}
Y. Yang, X. Qingyu, L. Qiude, W. Chao, G. Min, W. Kai. A hybrid
kernel function approach for acoustic reconstruction of temperature distribution. Measurement., 166 (2020) 108238.
\bibitem{omer}
O. Ömer. A local hybrid kernel meshless method for numerical solutions of
two-dimensional fractional cable equation in neuronal dynamics. Numer. Methods Partial Differ. Equ., 36(6) (2020) 1699-1717. 
\bibitem{Manzoor}
H. Manzoor.  Hybrid radial basis function methods of lines for the numerical
solution of viscous Burgers’ equation. Comput. Appl. Math., 40 (2021) 1-49. 
\bibitem{Cristhian}
B. Louis, M. Cristhian, G. C. Pedro, E. J. López.  Identification of a boundary obstacle in a Stokes fluid with Dirichlet–Navier
boundary conditions: External measurements.  J. Math. Anal. Appl., 531 (1) (2024)  127814. 
\bibitem{Nissaya}
C. Nissaya. A modified RBF collocation method for solving the
convection-diffusion problems. Abstr. Appl. Anal., 40 (2023) 1-10.
\bibitem{Shuang}
L. Yan, H. Yao, L. Qingtian, L. Shuang. Adaptive mesh-free approach for
gravity inversion using modified radial basis function. IEEE. Trans. Geosci. Remote.
Sens., 61 (2023) 1-12.
\bibitem{Nikan}
Z. Avazzadeh, O. Nikan, N. A. Tuan, N. V. Tien. A localized hybrid kernel meshless technique for solving the fractional Rayleigh- Stokes problem for an edge in a viscoelastic fluid. Eng. Anal. Bound. Elem., 146 (2023) 695-705.
\bibitem{44}
H. Wendland. Scattered data approximation, Cambridge University Press, 2005.
\bibitem{31}
G. E. Fasshauer, Meshfree approximation methods with MATLAB, World Scientific Publishing Co., Inc., River Edge, NJ,
USA, 2007.
\bibitem{37}
R. Cavoretto, A. De Rossi. M. S. Mukhametzhanov, Ya. D. Sergeyev. On the search of the shape parameter in radial basis functions using univariate global optimization methods. J. Glob. Optim., 79 (2019) 305-327.
\bibitem{39}
M. Hussain, S. Haq. A computational study of solitary waves solution of Kawahara-type equations by meshless spectral interpolation method. Int. J. Mod. Phys. C., 30(12) (2019) 1950102.
\bibitem{40}
M. Hussain, S. Haq, Numerical simulation of solitary waves of Rosenau–KdV equation by Crank– Nicolson meshless spectral interpolation method. Eur. Phys. J. Plus., 98  (2020).
\bibitem{42}
S. A. Sarra, E. J. Kansa. Multiquadric radial basis function approximation methods for the numerical solution of partial differential equations. Adv. Comput. Mech., 2 (2009) 1940-5820.
\bibitem{45}
W. Fang, Y. Wang, Y. Xu. An implementation of fast wavelet Galerkin methods for integral equations of the second kind, J. Sci. Comput., 20 (2004) 277-302.
\bibitem{46}
H. Kaneko, Y. Xu. Gauss-type quadratures for weakly singular integrals and their application to Fredholm integral equations of the second kind. Math. Comp., 62 (1994) 739-753.
\bibitem{461}
G. Barnett, L. D. Tongo. Data Structures and Algorithms: Annotated Reference with Examples. DotNetSlackers, 2008. 
\bibitem{Mirjalili} 
S. Mirjalili, A. Lewis. The whale optimization algorithm. Adv. Eng. Softw., 95 (2016) 51-67.
\bibitem{Khodadadi}
N. Khodadadi, A. Ö. Çiftçioğlu, S. Mirjalili, A. Nanni. A comparison performance analysis of eight meta-heuristic algorithms for optimal design of truss structures with static constraints. Decis. Analytics. J., 8 (2023) 100266.
\bibitem{Khalilpourazari}
S. Khalilpourazari, S. Khalilpourazary. An efficient hybrid algorithm based on water cycle and Moth–Flame optimization algorithms for solving numerical and constrained engineering optimization problems. Soft. Comput., 23 (5) (2019) 1699-1722.
\bibitem{Faris}
H. Faris, S. Mirjalili, I. Aljarah, M. Mafarja, A. A. Heidari. Salp swarm algorithm: Theory, literature review, and application in extreme learning machines. Stud. Comput. Intell., 811 (2020) 185-199.
\bibitem{Deb}
X. S. Yang, S. Deb, S. Fong, X. He, Y. X. Zhao. From swarm intelligence to metaheuristics: Natureinspired optimization algorithms. Computer., 49 (9) (2016) 52-59.
\bibitem{Rippa} 
S. Rippa. An algorithm for selecting a good value for the parameter c in radial basis function interpolation. Adv. Comput. Math., 11(2-3) (1990) 193-210.
\bibitem{Kennedy}
J. Kennedy, R. Eberhart. Particle swarm optimization, in: Proceedings of IEEE international
conference on neural networks. IEEE., (1995) 1942-1948.
\bibitem{Tian}
D. Tian, X. Zhao, Z. Shi. Chaotic particle swarm optimization with sigmoid-based acceleration coefficients for numerical function optimization. Swarm. Evol. Comput.    51 (2019) 100573. 
\bibitem{Chen}
K. Chen, F. Zhou, L. Yin, S. Wang, Y. Wang, F. Wan. A hybrid particle swarm optimizer with sine cosine acceleration
coefficients. Inf. Sci., 422 (2018) 218-241. 
\bibitem{591}
P. Assari, M. Dehghan. The numerical solution of two-dimensional logarithmic integral equations on normal domains using radial basis functions
with polynomial precision, Eng. Comput., 33 (2017) 853–870. 
\end{thebibliography}
\end{document}